\documentclass[11pt]{amsart}
 
\usepackage{mathtools}
\mathtoolsset{showonlyrefs,showmanualtags}
 \usepackage{hyphenat}
\usepackage{amssymb, amsmath, amsthm, esint}
\usepackage{mathrsfs}
\usepackage{color}
\usepackage{comment}
\usepackage{graphicx}

\usepackage[top=1.5in, bottom=1.5in, left=1.25in, right=1.25in]{geometry}
\usepackage[all]{xy}
\usepackage{amscd}
 \usepackage{microtype}
\usepackage[bottom]{footmisc}
 
\usepackage{fancyhdr}
 
\AtBeginDocument{\setlength{\footskip}{24pt}}
\fancypagestyle{plain}{%
  \fancyhf{}%
  \fancyfoot[C]{\thepage}%
}
 
\fancypagestyle{myfancy}{%
  \fancyhf{}%
  \fancyhead[CE]{\scshape Pointwise Convergence of Random Ergodic Averages}%
  \fancyhead[CO]{\scshape Burstein--Catani--Krause}%
  \fancyhead[LE,RO]{\thepage}%
}
 
\numberwithin{equation}{section}

\newtheorem{theorem}{Theorem}[section]
\newtheorem*{theorem*}{Theorem}

\newtheorem{proposition}[theorem]{Proposition}
\newtheorem*{proposition*}{Proposition}
\newtheorem{cor}[theorem]{Corollary}
\newtheorem{lemma}[theorem]{Lemma}
\newtheorem*{lemma*}{Lemma}
\newtheorem{remark}[theorem]{Remark}

\theoremstyle{definition}

\usepackage[
  pagebackref,
  colorlinks=true,
  linkcolor=blue,
  citecolor=magenta,
  filecolor=magenta,
  urlcolor=cyan
]{hyperref}

\makeatletter
\renewcommand*{\backref}[1]{}
\renewcommand*{\backrefalt}[4]{%
  \ifcase #1\relax
  \or
    \space (Page~#2)%
  \else
    \space (Pages~#2)%
  \fi
}
\makeatother

\begin{document}
 
\title{Pointwise Convergence of Random Ergodic Averages at the \(L^1\) Endpoint}
 
\author{Will Burstein}
\address{%
Department of Mathematics, University of Rochester \\
14627, Rochester, NY, Hylan Building, 915}
\email{willburst88@gmail.com}

\author{Lorenzo Catani}
\address{%
Department of Mathematics, Princeton University \\
Fine Hall, 304 Washington Rd, Princeton, NJ 08540, United States}
\email{lc9651@princeton.edu}

\author{Ben Krause}
\address{%
  Department of Mathematics, University of Bristol \\
  Beacon House, Queens Rd, Bristol BS8 1QU}
\email{ben.krause@bristol.ac.uk}

\date{September 2026}
 
\begin{abstract}
In his highly influential paper, \emph{On the maximal ergodic theorem for certain subsets of the integers}, Bourgain introduced the study of pointwise convergence of ergodic averages along randomly generated subsets of the integers, developing a robust \(L^p\)-theory for \(p>1\). More recently, this theory was partially complemented at the \(L^1\) endpoint by work of LaVictoire, who treated averages along random sequences that are ``slightly denser" than the set of squares, for which universal \(L^1\) pointwise convergence is known to fail. In this work, we prove that, almost surely, for every measure-preserving system and every integrable function, the ergodic averages formed by sampling along random sequences with quadratic growth converge almost everywhere. Our proof combines harmonic-analytic methods developed by M.~Christ with a ``random sampling" of quadratic forms, thereby overcoming the main uniformity obstruction.
\end{abstract}
 
\maketitle
 
\setcounter{footnote}{0} % Reset footnote counter after \maketitle
 
\pagestyle{myfancy}
 
\setcounter{tocdepth}{1}
\tableofcontents

\section{Introduction}

Let \((X,\mathcal F,\mu)\) be a \(\sigma\)-finite measure space and let
\(T:X\to X\) be a measure-preserving transformation, that is, a measurable map such that
\[
    \mu(T^{-1}(E))=\mu(E),
    \qquad E\in\mathcal F.
\]
We refer to the quadruple \((X,\mathcal F,\mu,T)\) as a \emph{measure-preserving system}. The field of pointwise ergodic theory is concerned with the pointwise limiting behavior, as \(N\to\infty\), of averages of the form
\begin{equation}\label{eq: averages}
\frac{1}{N}\sum_{n\leq N}T^{a_n}f,
\end{equation}
where \(\{a_n\}_{n\in\mathbb N}\subseteq\mathbb N\), \(f\) belongs to an appropriate function space on \(X\), and
\[
    (T^n f)(x):=f(T^n x),
    \qquad x\in X;
\]
the pointwise ergodic theorem of Birkhoff \cite{Birkhoff1931} asserts that, when \(a_n=n\), the averages in \eqref{eq: averages} converge \(\mu\)-almost everywhere for every \(f\in L^1(X,\mu)\).

A classical question in the subject, which remained open for nearly sixty years, concerns the pointwise limiting behavior of the averages in \eqref{eq: averages} when they are taken along sparse subsets of the integers. More specifically, one asks whether there exists a sequence \(\{a_n\}\subseteq\mathbb N\) of zero upper Banach density, that is,
\[
    d^*(\{a_n\})
    :=
    \limsup_{L\to\infty}
    \sup_{\substack{I\subseteq\mathbb N \, \operatorname{an \, interval}\\ |I|=L}}
    \frac{|\{a_n\}\cap I|}{|I|}
    =0,
\]
for which the averages in \eqref{eq: averages} nevertheless converge \(\mu\)-almost everywhere.

In a landmark series of papers \cite{maximalergodic,ergodicLp,B1} published in the late \(1980\)s, Bourgain answered this question in the affirmative for the sequence of squares
\[
    a_n=n^2,
\]
and, more generally, for integer-valued polynomial sequences. His argument introduced a novel combination of Fourier-analytic, number-theoretic and probabilistic techniques. For the purpose of this paper, we record the case of the squares below. 

\begin{theorem}[Polynomial Ergodic Theorem]\label{thm: poly ergodic}
Let \((X,\mathcal F,\mu,T)\) be a measure-preserving system, and let \(p>1\). Then, for every \(f\in L^p(X,\mu)\), the averages
\[
    \frac{1}{N}\sum_{n\leq N}T^{n^2}f
\]
converge pointwise \(\mu\)-almost everywhere as \(N\to\infty\).
\end{theorem}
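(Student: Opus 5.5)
The plan is to follow Bourgain's scheme. First I reduce, via the Calderón transference principle, to the integer shift system \((\mathbb Z,\text{counting measure},x\mapsto x+1)\). It then suffices to prove two things for \(f\in\ell^p(\mathbb Z)\), with \(A_Nf(x):=\frac1N\sum_{n\le N}f(x-n^2)\):
\begin{enumerate}
\item the maximal inequality \(\|\sup_N|A_Nf|\|_{\ell^p}\lesssim_p\|f\|_{\ell^p}\);
\item a quantitative oscillation or variation bound along lacunary times \(N\in\{\lfloor\rho^k\rfloor\}\), \(\rho>1\).
\end{enumerate}
Pointwise convergence holds trivially on a dense class: functions \(g-Tg\) together with invariant functions. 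Since \(A_N\) is not a Birkhoff average, however, I will not rely on that route. I will instead obtain convergence directly from the oscillation estimate. Passing from lacunary times to all \(N\) is routine, because for \(N\in[\rho^k,\rho^{k+1}]\) positivity gives \(|A_Nf-A_{\rho^k}f|\lesssim(\rho-1)\sup_M A_M|f|\) for \(f\ge0\).

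The core is Fourier analysis of the multiplier \(m_N(\xi)=\frac1N\sum_{n\le N}e(-n^2\xi)\). I will use the circle method. For \(\xi\) near a rational \(a/q\) with \(q\le 2^{s}\) small relative to \(N\), Weyl and Gauss sum analysis gives
\[
m_N(\xi)=S(a,q)\,\Phi_N(\xi-a/q)+O(\text{error}),
\]
where \(S(a,q)=q^{-1}\sum_{r\le q}e(-r^2a/q)\) satisfies \(|S(a,q)|\lesssim q^{-1/2}\), and \(\Phi_N\) is the continuous average \(\int_0^1 e(-N^2t^2\xi)\,dt\). On minor arcs, Weyl's inequality gives \(|m_N|\lesssim N^{-c}\) plus dyadic-in-\(q\) gains. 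I then decompose \(m_N=\sum_{s\ge0}m_N^{s}+E_N\). Here \(m_N^s\) is localized to rationals with denominator in \([2^s,2^{s+1})\), with a smooth cutoff \(\chi(10^s(\xi-a/q))\). The error \(E_N\) satisfies \(\|E_N\|_{\infty}\lesssim N^{-c}\), so it is summable in \(\ell^2\) over lacunary \(N\).

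In \(\ell^2\), each piece \(m^s\) is handled by a sampling principle (the Magyar–Stein–Wainger transference). The maximal operator of \(\sum_{a/q}S(a,q)\Phi_N(\xi-a/q)\chi_s\) is bounded by roughly \(2^{-s/2}\) times the continuous maximal function, up to a factor \(\log\) of the number of fractions. That logarithmic loss comes from Bourgain's lemma on maximal functions over frequencies: for \(\Lambda\) a set of \(K\) frequencies, the maximal function of \(\sum_{\lambda}\Phi_N(\xi-\lambda)\chi\) loses only \((\log K)^2\). The resulting estimate is summable in \(s\), since \(2^{-s/2}s^{2}\) is summable. For oscillation, I use the same decomposition with Lépingle's variational inequality for the continuous pieces, or Bourgain's entropy/oscillation bound. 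Together these give \(\sum_k|A_{N_{k+1}}f-A_{N_k}f|^2\)-type control that prevents divergence.

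To reach \(p>1\), I interpolate. For each \(s\) I prove an \(\ell^{p}\) maximal bound with loss \(2^{\epsilon s}\), via Ionescu–Wainger type multiplier theory or by Bourgain's cruder counting with denominators bounded by a product. I then interpolate against the \(\ell^2\) decay \(2^{-s/2+\epsilon}\), which gives a summable bound for every \(p>1\).

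I expect the main obstacle to be Bourgain's logarithmic frequency lemma. It requires a delicate \(TT^*\)/Rademacher–Menshov style argument with a good choice of separated frequencies. Getting \(\ell^p\) control with only subexponential loss in \(s\) for the major arc pieces is the other hard step, and for it I would first try the Ionescu–Wainger construction.
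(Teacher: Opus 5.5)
The paper gives no proof of this statement. It is quoted from Bourgain's work purely as background, so there is no internal argument to compare with. Your outline is the standard Bourgain route: transference, a major/minor arc decomposition, Gauss-sum decay $2^{-s/2}$ paired with Bourgain's multi-frequency lemma at cost $(\log K)^2\approx s^2$ for the $K\approx 2^{2s}$ fractions with $q\sim 2^s$, and $\ell^p$ by interpolation against $2^{\epsilon s}$ losses. As a roadmap it is sound, but several connecting steps are stated incorrectly and need the standard replacements.

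\emph{Lacunary to all $N$.} The bound $|A_Nf-A_{\rho^k}f|\lesssim(\rho-1)\sup_MA_M|f|$ is false. If $f$ is the point mass at $x-n_0^2$ with $\rho^k<n_0\le N$, the left side at $x$ is $1/N$, while the right side is about $(\rho-1)/n_0$. For $f\ge0$ and $N_k\le N<N_{k+1}$, positivity gives only $N_kA_{N_k}f\le NA_Nf\le N_{k+1}A_{N_{k+1}}f$. Hence $|A_Nf-A_{N_k}f|\le(\rho'-1)\sup_MA_Mf+|A_{N_{k+1}}f-A_{N_k}f|$ with $\rho'=N_{k+1}/N_k$. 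The extra lacunary increment is harmless once lacunary convergence is known, and you then let $\rho\downarrow1$ along a countable sequence, using the maximal inequality.

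\emph{What prevents divergence.} Control of $\sum_k|A_{N_{k+1}}f-A_{N_k}f|^2$ does not prevent divergence: the scalars $\cos\log k$ have square-summable increments. What you must prove and transfer is an $r$-variation bound with $r<\infty$, or Bourgain's oscillation inequality with $o(J^{1/2})$ growth in the number $J$ of blocks. Only such a quantitative statement carries information, since on $\mathbb Z$ one has $|A_Nf|\le N^{-1/2}\|f\|_{\ell^2}$ trivially. The $s$-th piece carries about $2^{2s}$ frequencies, so without a multi-frequency version, single-frequency L\'epingle can only be applied for $s\le s_0$. The tail $s>s_0$ is then absorbed through the maximal bound $\sum_{s>s_0}s^22^{-s/2}$, with $s_0$ chosen depending on $J$. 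It suffices to do this in $\ell^2$: convergence in $L^p$ for $1<p<2$ then follows from the $L^p$ maximal inequality by density.

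\emph{Where $2^{-s/2}$ comes from.} The Magyar--Stein--Wainger principle handles one modulus at a time. Summing it over $q\sim2^s$ costs about $2^s\cdot2^{-s/2}$, which diverges. The bound $s^22^{-s/2}$ comes instead from factoring out the $N$-independent multiplier $\sum_{a/q}S(a,q)\tilde\chi_s(\xi-a/q)$, with $\tilde\chi_s\equiv1$ on $\operatorname{supp}\chi_s$. This multiplier has norm $\lesssim2^{-s/2}$ by disjointness of supports, and Bourgain's lemma is applied to the remaining factor.

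\emph{The error term for $p<2$.} The bound $\|E_N\|_\infty\lesssim N^{-c}$ controls the error only in $\ell^2$. For $p<2$ you must restrict the major arcs at scale $N$ to $q\le N^{\epsilon}$, so that the error operator has trivial $\ell^1$ norm $O(N^{C\epsilon})$, and then interpolate.
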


In \cite{maximalergodic}, Bourgain also initiated the study of ergodic averages taken along randomly generated sequences of integers. The setup is as follows. Let \(\{X_n\}_{n\in\mathbb N}\) be a collection of independent Bernoulli random variables defined on a common probability space \((\mathbf{\Omega},\mathcal A,\mathbb P)\), with
\[
    \mathbb E(X_n)=\sigma_n\in(0,1],
\]
where \(\{\sigma_n\}\) is a decreasing sequence and $\sum_{n \in \mathbb N} \sigma_n = \infty$. The corresponding random sequence of \emph{hitting times} is defined by
\[
    a_n
    :=
    \min\Big\{
        k\in\mathbb N:
        X_1+\cdots+X_k=n
    \Big\};
\]
equivalently, \(\{a_n\}_{n \in \mathbb N}\) is the increasing enumeration of the random set
\[
    \big\{n\in\mathbb N:X_n=1\big\}.
\]
The decay of the selector probabilities \(\sigma_n\) determines the sparsity of the resulting sequence, in that
\begin{align}
    \frac{1}{m} \sum_{k \leq a_m} \sigma_k \to 1
\end{align}
by a brief argument involving e.g.\ Bernstein's inequality, see Theorem \ref{thm: standard bernstein} below.

Bourgain proved that this random model exhibits remarkably strong pointwise convergence properties away from the \(L^1\) endpoint. In particular, for every \(p>1\), he exhibited selector probabilities \(\{\sigma_n\}\) tending to zero for which, almost surely, the corresponding ergodic averages converge pointwise \(\mu\)-almost everywhere for every \(f\in L^p(X,\mu)\) and every $\sigma$-finite measure-preserving system. For example, when \(1<p\leq2\), Bourgain showed that the choice
\[
    \sigma_n
    =
    \frac{(\log\log n)^{B_p}}{n},
    \qquad
    B_p>\frac{1}{p-1},
\]
yields almost-sure pointwise convergence of the associated ergodic averages for every \(f\in L^p\).

The endpoint \(p=1\), however, is considerably more subtle. A striking example is provided by the squares themselves: whereas Theorem~\ref{thm: poly ergodic} yields pointwise convergence along \(n^2\) for every \(f\in L^p\), \(p>1\), Buczolich and Mauldin \cite{L1bad} showed that this conclusion need not persist at the \(L^1\) endpoint. On the other hand, Buczolich \cite{Buc} exhibited sequences of zero-upper Banach density for which the pertaining ergodic averages converged for every $f \in L^1$; subsequently, Urban and Zienkiewicz \cite{UZ} proved that for every
\[
    1<c<\frac{1001}{1000},
\]
the ergodic averages taken along the sequence
\[
    a_n=\lfloor n^c\rfloor
\]
converge pointwise for every $f\in L^1$.
This result was strengthened by Mirek \cite{M}, with the state of the art result due to the third author and Sun \cite{KS}, which extended the range to $c < 7/6$.

For random sequences, a positive \(L^1\) result was obtained by LaVictoire \cite{victoire}: if 
\[
    \sigma_n=n^{-\alpha},
    \qquad
    0<\alpha<\frac12,
\]
then, almost surely, the corresponding ergodic averages converge pointwise \(\mu\)-almost everywhere for every \(f\in L^1(X,\mu)\) and every $\sigma$-finite measure-preserving system. In this case,
$$
    \sum_{n\leq N}\sigma_n\approx N^{1-\alpha},
$$
where the notation \(\approx\) is defined in Section~\ref{sec: notation}.
As alluded to above, by Bernstein's inequality, this implies that, almost surely, there exists a finite random constant \(C_\omega>0\) such that the \(n\)-th hitting time satisfies
\begin{equation}\label{eq: growth rate random seq}
C_\omega^{-1}n^{\frac{1}{1-\alpha}}
\leq
a_n
\leq
C_\omega n^{\frac{1}{1-\alpha}}.
\end{equation}
In particular, LaVictoire's result produces random sequences with growth arbitrarily close to quadratic for which pointwise convergence holds for every \(L^1\) function; it does \emph{not} reach the critical value
$
    \alpha=1/2, 
$
at which the hitting times exhibit quadratic growth.

The quadratic growth condition is remarkably subtle: convergence in this case is not determined by the growth rate alone. Indeed, in \cite{Christ2011WeakTypeSparse}, Christ constructed sequences satisfying
\[
    a_n\approx n^2
\]
for which the corresponding ergodic averages converge pointwise \(\mu\)-almost everywhere for every \(f\in L^1(X,\mu)\) and every $\sigma$-finite measure-preserving system, as part of a more general phenomenon. His argument relies on Fourier-analytic conditions which ensure that the sparse averaging measures are sufficiently well approximated by normalized indicators of intervals. Thus, even among sequences with quadratic growth, the pointwise \(L^1\) behavior can differ dramatically from that of the squares.

In this paper, we focus on the critical case
\[
    \sigma_n=n^{-1/2},
\]
which corresponds, almost surely, to random sequences with quadratic growth. We prove that, with probability one, the associated ergodic averages converge pointwise \(\mu\)-almost everywhere for every integrable function. Specifically, we establish the following theorem. 

\begin{theorem}\label{thm: main theorem}
Let \(\{X_n\}_{n\in\mathbb N}\) be independent Bernoulli random variables defined on a common probability space \((\mathbf{\Omega},\mathcal A,\mathbb P)\), with
\[
    \mathbb E(X_n)=n^{-1/2}.
\]
For \(n\in\mathbb N\), let
\[
    a_n
    :=
    \min\Big\{
        k\in\mathbb N:
        X_1 + \dots +  X_k=n
    \Big\}
\]
denote the corresponding random sequence of hitting times. Then, with probability one, the following holds simultaneously for every \(\sigma\)-finite measure-preserving system \((X,\mathcal F,\mu,T)\): for every
\[
    f \in L^1(X, \mu)
\]
the ergodic averages
\begin{equation}\label{eq: random means}
    \frac1N\sum_{n\leq N}T^{a_n}f
\end{equation}
converge pointwise \(\mu\)-almost everywhere.
\end{theorem}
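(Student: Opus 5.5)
We will follow the standard architecture for \(L^1\) pointwise results. First, by the Calderón transference principle, it suffices to work on \(\mathbb Z\) with the shift. Second, by Birkhoff's theorem applied to a dense class (for instance \(L^2\) functions, where convergence is already known from Bourgain's random \(L^2\) theory, or via an oscillation estimate), pointwise convergence for all \(f\in L^1\) reduces to a weak-type \((1,1)\) bound for the maximal operator
\[
\mathcal M_\omega f:=\sup_{N}\Big|\frac1N\sum_{n\le N}f(x-a_n)\Big|.
\]
Since we may restrict to lacunary \(N=2^k\) up to a harmless loss, using the positivity of the averages, we will compare the random measure \(\mu_k:=2^{-k}\sum_{n\le 2^k}\delta_{a_n}\) with its "expected" counterpart. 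We will write \(\mu_k=\nu_k+(\mu_k-\nu_k)\), where \(\nu_k(m)=\sigma_m\mathbf 1_{[1,4^k]}(m)/\sum_{j\le 4^k}\sigma_j\) is a smooth measure whose maximal function is dominated by the Hardy--Littlewood maximal function. It then remains to prove that \(\sup_k|f*(\mu_k-\nu_k)|\) is weak \((1,1)\) almost surely. Here \(\mu_k-\nu_k=c_k\sum_{m\sim 4^k}(X_m-\sigma_m)\delta_m\) is, up to a normalisation \(c_k\approx 2^{-k}\), a sum of independent mean-zero variables.

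For the error term we will use Christ's weak-type method. A Calderón--Zygmund decomposition \(f=g+\sum_Q b_Q\) at height \(\lambda\) handles \(g\) via an \(L^2\) bound: by Bernstein's inequality (Theorem~\ref{thm: standard bernstein}) together with a union bound over a net of frequencies, \(\|\widehat{\mu_k-\nu_k}\|_\infty\lesssim 2^{-k}\sqrt{k}\) almost surely. This yields square summability of \(\|\widehat{\mu_k-\nu_k}\|_\infty\) over \(k\), hence an \(L^2\) bound. The bad part is the real difficulty. LaVictoire's argument controls \(\|\sum_{Q}(\mu_k-\nu_k)*b_Q\|_2^2\) by expanding it into a quadratic form \(\sum_{Q,Q'}\langle (\mu_k-\nu_k)*\widetilde{(\mu_k-\nu_k)}*b_Q,b_{Q'}\rangle\). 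The off-diagonal autocorrelation \((\mu_k-\nu_k)*\widetilde{(\mu_k-\nu_k)}(h)\) for \(h\neq0\) is a degree-two chaos in the independent variables \(X_m-\sigma_m\), with size roughly \(2^{-2k}\cdot(\text{number of pairs})^{1/2}\). At \(\alpha=1/2\) this is exactly critical: the diagonal term \(h=0\) has mass \(\approx 2^{-k}\), comparable to the off-diagonal \(\ell^\infty\) size times the support length. This is the main obstacle, and we will handle it by refusing to take a supremum over \(h\) uniformly.

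Concretely, we will fix the scale \(2^k\) and the Calderón--Zygmund cubes of side \(\le 4^k\), but instead of bounding the chaos uniformly in \(h\), we will exploit that the bad functions are fixed (deterministic given the data) while the randomness enters through a quadratic form in \(\{X_m\}\). Two steps address this. First, we will reduce, via a finite-dimensional approximation in which \(f\) is a nonnegative combination of \(\lambda\)-level configurations on a grid of scale \(4^k\), to a countable-per-scale family of test configurations. The family is of size \(\exp(O(\text{number of cubes}))\). Second, we will apply Hanson--Wright (decoupled Bernstein) to each quadratic form \(\sum_{m\neq m'}(X_m-\sigma_m)(X_{m'}-\sigma_{m'})K_{b}(m,m')\). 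Its Hilbert--Schmidt norm is controlled by \(\|b\|_1\lambda\) and its operator norm is small, so the deviation probability beats the entropy of the family. Summing over \(k\), Borel--Cantelli gives almost surely \(\|\sum_Q(\mu_k-\nu_k)*b_Q\|_2^2\lesssim 2^{-\epsilon k'}\lambda\|f\|_1\), where \(k'\) measures the gap between \(2^k\) and the cube side. Summing this geometric gain in \(k'\), and excising the doubled cubes as usual, yields the weak \((1,1)\) estimate. The delicate point is making the entropy count independent of \(f\): the Hanson--Wright tail must be uniform over all bad configurations, which is where the "random sampling of quadratic forms" will need to be engineered carefully.
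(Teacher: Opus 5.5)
Your outer reductions (transference, density of $L^2$ via Bourgain's theorem, a weak-type maximal inequality on $\mathbb Z$ over dyadic blocks, Hardy--Littlewood for the expected kernel, Bernstein plus a frequency net for the $\ell^2$ part) match the paper. The step that carries the theorem, however, is missing. What must be proved is $\|\nu_k*\mathbf 1_A\|_2^2\lesssim W^{-1}|A|$ simultaneously for all $A$ in a window of length $N\approx W^2$ with $|A|=a$ up to $W(k+2)^{100}$ (Proposition~\ref{p:key}, with $\nu_k$ the centered random kernel).

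The entropy of this family is $\log\binom{N}{a}\approx a\log N$. It comes from where the mass sits, not from the number of cubes, and no discretization of $f$ reduces it. Even after passing to a sparse Rademacher chaos, the Frobenius bound available uniformly in $A$ is $\|M_A\|_F^2\le Da/W^2$ with $D\approx(k+2)^2$. So Hanson--Wright gives tails no better than $\exp(-cW^2/(Da))$ at level $O(1)$. This can beat $\exp(a\log N)$ at best for $a\lesssim W(k+2)^{-3/2}$, which is essentially the range $a\le W(k+2)^{-2}$ already trivial from the difference-count bound; it fails in the critical range $a\approx W$. So ``the deviation probability beats the entropy of the family'' is false as stated, and your last sentence defers precisely the missing mechanism. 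Applying Hanson--Wright directly to the $X_m-\sigma_m$ is worse still: their subgaussian norms are $\approx(\log W)^{-1/2}$, so the sparsity is invisible. The paper first symmetrizes, $Y-Y'=\sum_{u\in S}\varepsilon_u\delta_u$, and conditions on a support with $|S|\lesssim W$ and $\max_{t\neq0}r_S(t)\lesssim(k+2)^2$.

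The paper then closes the gap in four steps. It trims collisions through the $O((k+2)^{13B})$ heavy centers (Lemma~\ref{lem:heavy-centers}), so each $H_{b,c}$ is a symmetrized partial permutation with $\mathbb E_cH_{b,c}^2\preceq(k+2)^{-6B}I$. It replaces $M_A$ by the unbiased $U$-statistic $\widehat M_A$ over $m\approx(k+2)^{15B}$ sampled points of $A$. This matrix is good with high probability by matrix $U$-statistic concentration (Lemma~\ref{lem:sampling}), and it is determined by $(a,b_1,\dots,b_m,J_\pm)$, so it ranges over a family of log-cardinality $\lesssim(k+2)^{16B}$. Hanson--Wright with a union bound over that family then applies, and the estimate transfers back via $\mathbb E_{\mathrm{aux}}\widehat M_A=M_A$ and a crude bound on bad samples (Corollary~\ref{cor:sampling-transfer}).

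Independently of this, your claims that the Hilbert--Schmidt norm is controlled by $\lambda\|b\|_1$, and that $\|\sum_Q(\mu_k-\nu_k)*b_Q\|_2^2\lesssim2^{-\epsilon k'}\lambda\|f\|_1$, fail for Calder\'on--Zygmund bad functions because of spikes. With $\lambda=1$, the function $f=h\delta_{x_0}$ produces a cube with $|Q|\approx h$ and $b_Q\approx h\delta_{x_0}$. At a scale whose random kernel has about $W$ atoms of weight $\approx W^{-1}$, one gets $\|(\mu_k-\nu_k)*b_Q\|_2^2\approx h^2/W$ against $\|b_Q\|_1\approx h$. So your bound fails for every $W\ll|Q|\lesssim W^2$, and the mean-zero property of $b_Q$ gives no cancellation against a kernel made of point masses.

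The paper deals with this in physical space. It discards $\bigcup_k(\operatorname{supp}\mu_k+\{f\ge cW_k\})$, a set of size $\lesssim\sum_kW_k|\{f\ge cW_k\}|\lesssim\|f\|_1$. It works with $f\mathbf 1_E$ instead of $b_Q$ and splits into dyadic level sets, so that only indicators $\mathbf 1_A$ with occupancy $|A|\lesssim2^sW_k$ remain. Summation over scales then comes from the factor $2^{-s}$ in the altitude gap $s$, since each level is used by $O(1)$ scales; long gaps are handled by the Fourier bound and short gaps by Proposition~\ref{p:key}. It does not come from a gain in the ratio of cube side to scale.

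Two minor points. In your normalization the Fourier bound is $\sqrt k\,2^{-k/2}$, not $\sqrt k\,2^{-k}$, though it is still square-summable. Also, because of the random cutoff $a_{2^k}$, your $\mu_k-\nu_k$ is not literally a normalized sum of the $X_m-\sigma_m$. You should first pass to averages normalized by $\sum_{n\le N}\sigma_n$ over dyadic blocks, as the paper does.
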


In light of Bourgain's $L^2$ work, Theorem \ref{thm: main theorem} follows from an appropriate weak-type maximal inequality for finitely supported functions on \(\mathbb Z\), stated below as Theorem~\ref{t:main}. This implication follows from a standard transference and approximation argument which we review for completeness in Section~\ref{sec: reduction to max inequality}.

The proof of the maximal inequality can be conceptually divided into two parts. In the first part, we carry out a sequence of harmonic-analytic reductions inspired by \cite{Christ2011WeakTypeSparse}. These include a Calder\'on--Zygmund localization, a decomposition into dyadic level sets and a decomposition of the relevant scales into a \emph{short-gap} and a \emph{long-gap} contribution. After these reductions, the main difficulty is reduced to estimating the \(\ell^2\) norm of convolutions of a centered random kernel with indicators of sets of prescribed cardinality. The key estimate is recorded in Proposition \ref{p:key}.

The second part is devoted to proving this restricted estimate. We begin with a symmetrization argument that reduces the problem to a finite model in which sets of controlled cardinality are convolved with a random signed kernel supported on a fixed sparse set. The main difficulty is then one of uniformity: the class of admissible sets is too large to permit a direct probabilistic union bound. To overcome this, we introduce an auxiliary sampling procedure which replaces each set \(A\) by a small random sample of points from \(A\). With high probability, the sampled objects retain the analytic properties needed in the proof, while belonging to a family of much smaller complexity. This reduction makes it possible to obtain the required uniform estimate by combining matrix concentration inequalities with a simple union bound, and then to transfer the resulting control back to the original deterministic family. For a detailed overview of the proof of the fixed-support estimate, stated below as Proposition~\ref{prop: fixed support prop}, see Section~\ref{sec: proof overview}.

We mention that an earlier argument of the second author gave a weaker
form of Proposition~\ref{p:key}. Combined with a harmonic-analytic argument very similar to the one described below, this yields the analog of Theorem \ref{thm: main theorem} for functions which satisfy the stronger integrability condition $$\int_X |f(x)|\log \log \log (10^{10}+|f(x)|)\, d\mu(x) < \infty.$$
That argument makes use of Fourier analysis and metric-entropy estimates; a high-level sketch is given in Appendix~\ref{app:old-argument}.

We expect that the method developed in this paper is not restricted to the critical exponent \(\alpha=1/2\). In particular, the parameter balances arising in the fixed-support argument suggest that the same general strategy should extend to selector probabilities
\[
\sigma_n=n^{-\alpha}
\]
for at least some range of exponents \(\alpha>1/2\). Preliminary considerations suggest that the argument may extend up to \(\alpha<2/3\); since our main objective is the critical case \(\alpha=1/2\), we do not pursue the optimization of this range here.
\subsection*{Acknowledgments} 
The third author benefited from conversations with Michael Christ and Terence Tao during an MSRI Program in Harmonic Analysis, held in Spring 2017.

The development of this approach benefited in part from mathematical suggestions generated in discussions with ChatGPT.  The authors take responsibility for the mathematical statements and proofs.

\subsection{Remarks on Notation}\label{sec: notation}
We employ modified Vinogradov notation throughout, writing $X \lesssim Y$, 
or equivalently $Y \gtrsim X$, to denote the bound $X \leq CY$ for some absolute constant $C > 0$. When the constant depends on a parameter, we indicate this by a subscript: for instance, $X \lesssim_p Y$ means $X \leq C_p Y$ for some 
constant $C_p$ depending on $p$. In particular, we will write
\[
A\lesssim_\omega B
\]
to indicate that the estimate holds almost surely, with an implicit constant which may depend on the underlying random outcome \(\omega\). 
We write $X \approx Y$ as shorthand for 
$Y \lesssim X \lesssim Y$, and analogously define $X \approx_p Y$.

We also use the standard big-$O$ notation: $O(Y)$ denotes a quantity whose 
magnitude is $\lesssim Y$, and $O_p(Y)$ a quantity whose magnitude is 
$\lesssim_p Y$.

We denote by $e(t) := e^{2\pi i t}$ the standard additive character on $\mathbb{T} \coloneqq \mathbb{R}/\mathbb{Z} \cong [0,1)$. For a finitely supported function $f : \mathbb{Z} \to \mathbb{C}$, we define its Fourier transform $\widehat{f} : \mathbb{T} \to \mathbb{C}$ by
\[
    \widehat{f}(\beta) \coloneqq \sum_{n \in \mathbb{Z}} f(n)\, e(-n\beta), \qquad \beta \in \mathbb{T},
\]
with inversion formula
\[
    f(n) = \int_{\mathbb{T}} \widehat{f}(\beta)\, e(n\beta)\, d\beta, \qquad n \in \mathbb{Z},
\]
and Plancherel's identity
\[
    \sum_{n \in \mathbb Z} |f(n)|^2 = \int_{\mathbb{T}} |\widehat{f}(\beta)|^2\, d\beta.
\]

We write $M_{\mathrm{HL}}$ for the Hardy--Littlewood maximal operator, which in the discrete setting takes the form 
\[
    M_{\mathrm{HL}}f(x) \coloneqq \sup_{N \geq 1} \frac{1}{2N+1} \sum_{n = -N}^N |f(x-n)|,
\]
and its dyadic variant
\begin{equation}\label{eq: dyadic HL}
 M_\Delta f(x) := \sup_{Q \ni x: \,Q \in \mathcal D} \frac{1}{|Q|} \sum_{y \in Q}|f(y)|, 
 \end{equation}
where $$\mathcal D := \bigcup_{n \in \mathbb N \cup \{0\}} \mathcal D (n), \qquad \mathcal D(n) := \Big\{ \big[2^n m,2^n (m+1) \big)\cap\mathbb Z:
m\in\mathbb Z  \Big\}
$$ are the sets of \emph{dyadic intervals}.

For a real matrix $B$, we write $B^{\mathsf T}$ for its transpose,
$\|B\|_F^2=\operatorname{tr}(B^{\mathsf T}B)$ for its squared
Frobenius norm, and $\|B\|_{\mathrm{op}}$ for its Euclidean operator
norm. For symmetric matrices, $B\preceq C$ means that $C-B$ is positive
semidefinite. We use $\varepsilon$ for a vector of independent Rademacher signs,
with law $\mathbb P_\varepsilon$, while $\mathbb P_{\mathrm{aux}}$ and
$\mathbb E_{\mathrm{aux}}$ refer to the auxiliary sampling introduced
in Section~\ref{sec:sampling}.

\section{Problem Setup and Reduction to a Maximal Inequality}
\label{sec: reduction to max inequality}

Throughout the paper, we let \(\{X_n\}_{n\in\mathbb N}\) be a collection of independent Bernoulli random variables defined on a common probability space
    $(\mathbf{\Omega},\mathcal A,\mathbb P)$,
with expectations
\[
    \mathbb E(X_n)=\sigma_n=n^{-1/2}.
\]

We begin by reformulating the random ergodic averages in terms of the selector variables \(X_n\). Recall that \(\{a_n\}_{n\in\mathbb N}\) denotes the increasing enumeration of the random set
\[
    \{n\in\mathbb N:X_n=1\}.
\]
It follows that convergence of the random ergodic means in \eqref{eq: random means} is equivalent to convergence of the averages
\begin{equation}\label{eq: averages with X_n}
\frac{1}{\sum_{n\leq N}X_n}
\sum_{n\leq N}X_n \cdot T^nf
\end{equation}
as \(N\to\infty\).
Since
\[
    \sum_{n\leq N}\sigma_n\rightarrow\infty,
\]
the selector count satisfies, with probability one,
\begin{equation}\label{eq: ssn approx}
\frac{\sum_{n\leq N}X_n}
{\sum_{n\leq N}\sigma_n}
\longrightarrow1.
\end{equation}
This can be proven quickly by way of Bernstein's inequality, which will be used repeatedly throughout the paper.
\begin{theorem}[Bernstein's inequality]\label{thm: standard bernstein}
Let \(Z_1,\dots,Z_N\) be independent, mean-zero random variables satisfying
\[
    |Z_n|\leq1
\]
almost surely for every \(n\). Set
\[
    \sigma^2
    :=
    \sum_{n=1}^N\mathbb E(|Z_n|^2).
\]
Then, for every \(t>0\),
\[
    \mathbb P\Big(
        \Big|
            \sum_{n=1}^N Z_n
        \Big|
        \geq t
    \Big)
    \lesssim
    \exp\Big(
        -c\min\Big\{
            \frac{t^2}{\sigma^2},
            t
        \Big\}
    \Big),
\]
where \(c>0\) is an absolute constant.
\end{theorem}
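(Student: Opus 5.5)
The plan is to derive the statement from the exponential moment method, using a Chernoff bound applied separately to each tail. By symmetry (replace \(Z_n\) by \(-Z_n\)) it suffices to bound \(\mathbb P(\sum_n Z_n\geq t)\); the two one-sided bounds add up to the stated two-sided estimate, at the cost of a factor \(2\) absorbed into \(\lesssim\). For \(\lambda\in(0,1]\), Markov's inequality gives
\[
\mathbb P\Big(\sum_{n=1}^N Z_n\geq t\Big)\leq e^{-\lambda t}\,\mathbb E\, e^{\lambda\sum_n Z_n}=e^{-\lambda t}\prod_{n=1}^N\mathbb E\, e^{\lambda Z_n},
\]
where the product form comes from independence. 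We will treat \(Z_n\) as real-valued; in the complex case we apply the real result to the real and imaginary parts and take a union bound, which only changes the absolute constants.

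The key step is a bound on each individual moment generating function. Since \(|\lambda Z_n|\leq1\), the elementary inequality \(e^{x}\leq 1+x+x^2\) for \(|x|\leq1\) applies. Together with \(\mathbb E Z_n=0\) it gives
\[
\mathbb E\, e^{\lambda Z_n}\leq 1+\lambda^2\,\mathbb E|Z_n|^2\leq \exp\big(\lambda^2\,\mathbb E|Z_n|^2\big).
\]
Taking the product over \(n\) yields
\[
\mathbb P\Big(\sum_n Z_n\geq t\Big)\leq\exp\big(-\lambda t+\lambda^2\sigma^2\big),\qquad 0<\lambda\leq1.
\]

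It remains to optimize over \(\lambda\), splitting into two cases. If \(t\leq 2\sigma^2\), we choose \(\lambda=t/(2\sigma^2)\leq1\), which gives the bound \(\exp(-t^2/(4\sigma^2))\); in this regime \(t^2/\sigma^2\leq 2t\), so \(t^2/\sigma^2\) is, up to a factor \(2\), the minimum in the statement. If \(t>2\sigma^2\), we choose \(\lambda=1\) and obtain \(\exp(-t+\sigma^2)\leq\exp(-t/2)\); here \(t<t^2/(2\sigma^2)\), so \(t\) is comparable to the minimum. In both cases the bound is \(\exp(-c\min\{t^2/\sigma^2,t\})\) with, say, \(c=1/4\).

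There is no real obstacle here; the only points needing care are the restriction \(\lambda\leq1\), which is what forces the linear-in-\(t\) regime, and the bookkeeping of constants across the two cases.
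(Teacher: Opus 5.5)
Your proof is correct and complete, and it is essentially the paper's approach: the paper gives no argument of its own and cites Tao--Vu, Theorem 1.8, whose proof is this same exponential-moment method (the bound $\mathbb E\, e^{\lambda Z_n}\le e^{\lambda^2\mathbb E|Z_n|^2}$ from $e^x\le 1+x+x^2$ on $|x|\le 1$, then $\lambda=\min\{t/(2\sigma^2),1\}$). Your reduction of the complex case to real and imaginary parts is also needed, since the paper applies the inequality to complex sums such as $\widehat{\nu_k}(\beta)$ in Proposition~\ref{thm: fourier bound nu_k}.
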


\begin{proof}
See, for instance, \cite[Theorem 1.8]{Tao_Vu}.
\end{proof}

In view of \eqref{eq: ssn approx}, we may replace the averages in
\eqref{eq: averages with X_n} by the asymptotically equivalent averages
\begin{equation}\label{eq: averages with sigma_n}
A_N^{(\omega)}f
:=
\frac{1}{\sum_{n\leq N}\sigma_n}
\sum_{n\leq N}X_n \cdot T^nf.
\end{equation}
We next reduce Theorem~\ref{thm: main theorem} to a maximal estimate for this family of operators.

\begin{theorem}\label{thm: general max inequality}
Almost surely, there exists a finite random constant \(C_\omega>0\) such that the following holds simultaneously for every \(\sigma\)-finite measure-preserving system \((X,\mathcal F,\mu,T)\), every $f \in L^1(X, \mu)$
and every \(\lambda>0\):
\[
    \mu\Big(
        \Big\{
            x\in X:
            \sup_{N\in\mathbb N}
            |A_N^{(\omega)}f(x)|
            \geq
            \lambda
        \Big\}
    \Big)
    \leq
    \frac{C_\omega}{\lambda} \|f\|_{L^1(X,\mu)}
\]

\end{theorem}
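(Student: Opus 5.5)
The plan is to deduce Theorem~\ref{thm: general max inequality} from the corresponding weak-type $(1,1)$ maximal inequality on $\mathbb Z$ (Theorem~\ref{t:main}, stated below), using Calder\'on's transference principle. The key point is that the random constant must not depend on the system. Fix $\omega$ in the full-measure event on which the integer inequality
\[
    \Big|\Big\{ x\in\mathbb Z : \sup_{N} \Big|\frac{1}{\sum_{n\le N}\sigma_n}\sum_{n\le N} X_n(\omega)\, g(x-n)\Big|\ge\lambda\Big\}\Big| \le \frac{C_\omega}{\lambda}\|g\|_{\ell^1(\mathbb Z)}
\]
holds for all finitely supported $g$ and all $\lambda>0$. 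This event involves only the sequence $\{X_n(\omega)\}$ and not any measure-preserving system, so the same $C_\omega$ will serve for all systems simultaneously. By monotone convergence it suffices to bound the supremum over $N\le M$ with constants uniform in $M$, and we may take $f\ge 0$.

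The transference proceeds as follows. Fix $M$ and a large $L\gg M$. For $x\in X$, define $g_x(k):=f(T^kx)\mathbf 1_{[0,L+M]}(k)$ on $\mathbb Z$. For $0\le k\le L$ and $N\le M$, the average $A_N^{(\omega)}f(T^kx)$ equals the corresponding $\mathbb Z$-average of $g_x$ at the point $k$, after reflecting $n\mapsto -n$. The reflection is harmless, since the integer inequality is invariant under reflection of $g$ or can be stated for either orientation. Applying the integer inequality to $g_x$ and integrating over $x$, we get
\[
\sum_{k=0}^{L}\mu\big(\{x:\sup_{N\le M}|A_N^{(\omega)}f(T^kx)|\ge\lambda\}\big)\le \frac{C_\omega}{\lambda}\int_X \sum_{k=0}^{L+M} |f(T^kx)|\,d\mu(x) .
\]
By measure preservation, the left side equals $(L+1)\,\mu(\{\sup_{N\le M}|A_N^{(\omega)} f|\ge\lambda\})$ and the right side equals $\frac{C_\omega}{\lambda}(L+M+1)\|f\|_1$. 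Dividing by $L+1$, letting $L\to\infty$, and then letting $M\to\infty$ gives the claimed bound. This uses only the invariance of $\mu$, so $\sigma$-finiteness suffices: Fubini applies to nonnegative functions, and we need not pass to a finite measure. A minor technical point is that $g_x$ may fail to be finitely supported in value but is finitely supported in $k$, which is all that is needed. Measurability of the maximal function is ensured because the supremum is over countably many $N$.

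The substance therefore lies entirely in the integer weak-type inequality, which is the main obstacle. The transference step itself is routine. For the integer estimate, I would follow the strategy outlined in the introduction. The first step is to split $X_n=\sigma_n+(X_n-\sigma_n)$. The deterministic part, with kernel $\frac{1}{\sum_{n\le N}\sigma_n}\sum_{n\le N}\sigma_n\delta_n$, is dominated by $M_{\mathrm{HL}}$ because $\sigma_n$ is decreasing, so it is weak $(1,1)$. For the centered random part, I would perform a Calder\'on--Zygmund decomposition at height $\lambda$ and pass to dyadic scales $N\approx 2^j$. The good part is handled by Bourgain-type $\ell^2$ bounds; these follow from Bernstein's inequality (Theorem~\ref{thm: standard bernstein}) together with a union bound over frequencies, giving $\|\widehat{K_j}\|_\infty\lesssim_\omega 2^{-j/4}\sqrt{j}$ for the centered kernels $K_j$. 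The bad part, $\sum_Q b_Q$, is treated by Christ's argument. One grades the cubes dyadically by side length and by level sets of $|b_Q|$ and reduces, through short-gap and long-gap scales, to $\ell^2$ bounds for $K_j * \mathbf 1_A$ with $A$ a set of prescribed cardinality. These bounds must hold uniformly over all such $A$, and that uniformity is the content of Proposition~\ref{p:key}; I expect it to be the hardest step.

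Finally, the passage from Theorem~\ref{thm: general max inequality} to Theorem~\ref{thm: main theorem} combines three ingredients: convergence on a dense class (Bourgain's $L^2$ theory), the maximal inequality just proved, and the asymptotic equivalence \eqref{eq: ssn approx}.
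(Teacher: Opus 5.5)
Your proposal is correct and follows the paper's route: Calder\'on transference reduces the claim to the weak-type inequality on $\mathbb Z$ (Theorem~\ref{t:main}, after using positivity to pass to dyadic scales and the annular kernels $\mu_k$), which is then proved by Christ-type localization, level-set and short/long-gap reductions down to the uniform restricted estimate of Proposition~\ref{p:key}. Your explicit transference computation correctly supplies the step the paper only cites: it uses forward orbits with a reflection and only the $T$-invariance of $\mu$, so neither invertibility of $T$ nor finiteness of $\mu$ is needed.
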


\begin{proof}[Proof of Theorem~\ref{thm: main theorem} assuming Theorem~\ref{thm: general max inequality}]
The following is a standard approximation argument. 
Fix \(\omega\) in the probability-one event on which the maximal inequality holds. By density, every $f \in L^1(X, \mu)$
may be approximated by functions \(g_j\in L^2(X,\mu)\) so that $$\|f-g_j\|_{L^1(X, \mu)} \rightarrow 0$$
as \(j\to\infty\).
By Bourgain's theorem, \(A_N^{(\omega)}g_j\) converges pointwise \(\mu\)-a.e.\ for each $j$, after possibly restricting to a smaller universal probability--one event. Hence
\[
    \limsup_{M,N\to\infty}
    |A_N^{(\omega)}f-A_M^{(\omega)}f|
    \leq
    2\sup_{N\in\mathbb N}
    |A_N^{(\omega)}(f-g_j)|
\]
for \(\mu\)-a.e. \(x \in X\). Therefore, Theorem~\ref{thm: general max inequality} yields the bound
\[
    \mu\Big(
        \Big\{
            x\in X:
            \limsup_{M,N\to\infty}
            |A_N^{(\omega)}f(x)-A_M^{(\omega)}f(x)|
            \geq
            t
        \Big\}
    \Big)
    \leq
    C_\omega
    \frac{\|f-g_j\|_{L^1(X, \mu)}}{t}, \qquad t > 0.
\]
First sending \(j\to\infty\) and then \(t\to0\), we conclude that
$
    \{A_N^{(\omega)}f(x)\}_{N\in\mathbb N}
$
is Cauchy for \(\mu\)-a.e. \(x\in X\), and hence converges. In view of \eqref{eq: ssn approx}, the original random ergodic means converge as well. This proves Theorem~\ref{thm: main theorem}.
\end{proof}
It remains to establish Theorem~\ref{thm: general max inequality}. By Calderón's Transference Principle \cite{calderon}, it is enough to prove the corresponding quantitative estimate for finitely supported functions on \(\mathbb Z\), with the transformation \(T\) replaced by the shift. 

Note that, by positivity of the averaging operators, it suffices to control the dyadic subsequence. Moreover, each dyadic initial segment may be decomposed into dyadic annuli, and positivity allows us to dominate the resulting averages by averages over the ``upper halves" of these intervals. 

Consequently, for $k \in \mathbb N$ we define the dyadic interval
\[
    I_k
    :=
    \big(2^{k-1},2^k\big]\cap\mathbb Z
\]
and its expected selector count
\begin{equation}\label{eq: W_k approx}
    W_k
    :=
    \sum_{n\in I_k}\sigma_n \approx 2^{k/2}.
\end{equation}
Define the centered random variables
\[
    Y_n:=X_n-\sigma_n
\]
and the random measures

\[
    \mu_k
    :=
    \frac{1}{W_k}
    \sum_{n\in I_k}X_n\delta_n,
    \qquad
    \nu_k
    :=
    \mu_k-\mathbb E(\mu_k)
    =
    \frac{1}{W_k}
    \sum_{n\in I_k}Y_n\delta_n.
\]
The relevant maximal operator is then given by
\[
    Mf
    :=
    \sup_{k\geq1}
    |\mu_k*f|,
    \qquad
    f:\mathbb Z\to\mathbb C;
\]
it suffices to establish the following discrete maximal estimate.
\begin{theorem}\label{t:main}
Almost surely, there exists a finite random constant \(C_\omega > 0\) such that, for every finitely supported function
$
    f:\mathbb Z\to\mathbb C
$
and every \(\lambda>0\),
\[
    \Big|
        \Big\{
            x\in\mathbb Z:
            Mf(x)\geq\lambda
        \Big\}
    \Big|
    \leq
    \frac{C_{\omega}}{\lambda} 
    \sum_{x \in \mathbb Z} |f(x)|.
\]
\end{theorem}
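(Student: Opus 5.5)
Since $\mu_k=\mathbb E(\mu_k)+\nu_k$ and $\mathbb E(\mu_k)=W_k^{-1}\sum_{n\in I_k}\sigma_n\delta_n$ is a positive measure of mass one, supported on $I_k$ with density $\approx 2^{-k}$, we get
\[
\sup_k|\mathbb E(\mu_k)*f|\lesssim M_{\mathrm{HL}}f .
\]
Hence the weak $(1,1)$ bound for $M_{\mathrm{HL}}$ reduces Theorem~\ref{t:main} to the same estimate for $\sup_k|\nu_k*f|$. We may assume $f\ge0$ and normalize $\lambda=1$.

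Following Christ, we take a Calder\'on--Zygmund decomposition of $f$ at height $1$ with respect to the dyadic grid $\mathcal D$:
\[
f=g+\sum_{Q}b_Q,
\]
where $\|g\|_\infty\lesssim1$, $\|g\|_1\le\|f\|_1$, the $b_Q$ are supported on disjoint dyadic intervals $Q$ with $\sum|Q|\lesssim\|f\|_1$, and $\|b_Q\|_1\lesssim|Q|$. We will not use mean zero, since $\nu_k$ has no smoothness to exploit.

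For the good part $g$ we use an $\ell^2$ bound. Almost surely,
\[
\big\|\sup_k|\nu_k*g|\big\|_{\ell^2}^2\le\sum_k\|\nu_k*g\|_2^2\le\sum_k\sup_\beta|\widehat{\nu_k}(\beta)|^2\|g\|_2^2 .
\]
Bernstein's inequality (Theorem~\ref{thm: standard bernstein}) together with a union bound over a $2^{-2k}$-net in $\beta$ gives $\sup_\beta|\widehat{\nu_k}(\beta)|\lesssim_\omega k^{1/2}2^{-k/4}$. This is summable, and $\|g\|_2^2\lesssim\|f\|_1$.

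For the bad part we first discard the exceptional set $\bigcup_Q 3Q$, whose measure is $\lesssim\|f\|_1$. We then sort the $Q$ by side length $2^s$ and pair them with scales $k$. \emph{Short gaps}, $s\ge k-O(1)$: here $\nu_k*b_Q$ is supported in $3Q$ and needs no estimate. \emph{Long gaps}, $s<k$: we bound
\[
\sup_k\Big|\nu_k*\sum_{s<k}b_s\Big|\le\Big(\sum_k\Big|\nu_k*\sum_{s<k}b_s\Big|^2\Big)^{1/2}
\]
and estimate it in $\ell^2$. By a further dyadic decomposition of each $b_Q$ into level sets $\{b_Q\approx 2^j\}$, which we then sum over $j$ with geometric decay, we reduce to indicators $\mathbf 1_E$ with $E$ a union of pieces of controlled cardinality.

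This produces the key restricted estimate, which is the step I expect to be the main obstacle. Almost surely, for every $k$ and every set $E$ of cardinality $\le m$ that is spread across intervals of length $2^s$, $s<k$, we need
\[
\|\nu_k*\mathbf 1_E\|_2^2\lesssim_\omega 2^{-\delta(k-s)}|E|,
\]
with some decay in $k-s$ (a power, or at least a summable logarithmic loss).

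For a fixed $E$, this is a quadratic form in the independent variables $Y_n$:
\[
W_k^{-2}\sum_{n,n'}Y_nY_{n'}\,\big|E\cap(E+n-n')\big|.
\]
The diagonal contributes $W_k^{-2}\sum_n\sigma_n|E|\approx|E|/W_k$. The off-diagonal part is controlled by Hanson--Wright type bounds, or by decoupling and Rademacher symmetrization followed by matrix concentration. The problem is uniformity: the number of admissible $E$ is far too large for a union bound. My plan is to replace $E$ by a random subsample of $\approx W_k\cdot\mathrm{polylog}$ points and to show that, with high auxiliary probability, the sample reproduces the correlation counts $|E\cap(E+h)|$ for $h\in I_k-I_k$ up to acceptable error. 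The union bound then runs only over the much smaller family of samples. Finally, we use Borel--Cantelli in $k$ to get a single almost-sure constant $C_\omega$, and sum over $s$, $j$ and $k$.
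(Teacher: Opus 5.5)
Your outline (Calder\'on--Zygmund, $\ell^2$ for the good part, square functions, a restricted estimate made uniform by sampling, Borel--Cantelli) resembles the paper's, but two essential ingredients are missing, and the argument does not close as written.

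\emph{Large values of $f$.} Calder\'on--Zygmund at height $1$ controls only averages, so $b_Q$ can take values up to $|Q|=2^s$. At scale $k$, no $\ell^2$ argument can handle heights $2^j\gg W_k\approx 2^{k/2}$: the diagonal term alone contributes $\|\nu_k*(2^j\mathbf 1_E)\|_2^2\approx (2^j/W_k)\,\|2^j\mathbf 1_E\|_1$. So there can be no ``geometric decay in $j$'' at those heights. For instance, $f=2^s\delta_0$ gives a single CZ interval of length $2^s$. Your long-gap square function then satisfies $\sum_{k>s}\|\nu_k*f\|_2^2\approx 2^{2s}\sum_{k>s}W_k^{-1}\approx 2^{3s/2}\gg\|f\|_1$. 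Discarding $3Q$ does not help, since $\nu_k*f$ lives on $I_k$. The paper instead excises $X_{\mathrm{high}}=\bigcup_k\bigl(\operatorname{supp}\mu_k+\{f\ge cW_k\}\bigr)$. This set has size $\lesssim_\omega\sum_kW_k\,|\{f\ge cW_k\}|\lesssim\|f\|_1$, because $|\operatorname{supp}\mu_k|\lesssim_\omega W_k$. This physical-space step uses sparsity of the support, not cancellation. It is what restricts the levels seen at scale $k$ to $2^j$ with $j\le\ell_k=\lfloor k/2\rfloor$.

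\emph{The key estimate and the sampling.} The restricted estimate you single out as the main obstacle is actually free. Since $\|\widehat{\nu_k}\|_{L^\infty}^2\lesssim_\omega k/W_k\approx k2^{-k/2}$ (the bound you already use for $g$), Plancherel gives $\|\nu_k*\mathbf 1_E\|_2^2\lesssim_{\omega,\delta}2^{-\delta k}|E|\le 2^{-\delta(k-s)}|E|$ for every $\delta<1/2$ and every $E$. That it is free yet insufficient shows that the gap between CZ length and scale is the wrong parameter. The relevant one is the altitude gap $\ell_k-j$.

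For the level-$2^j$ piece $f^j_J$ localized to an interval $J$ of length $2^{k+4}$, the Fourier bound only gives $\|\nu_k*f^j_J\|_2^2\lesssim_\omega k\,2^{j-\ell_k}\|f^j_J\|_1$. The factor $k$ is absorbed only when $\ell_k-j\gtrsim\log k$; using it throughout would yield only an $L\log L$-type result. In the remaining range, the occupancy bound $|\operatorname{supp}f^j_J|\lesssim 2^{k-j}\lesssim W_k(k+2)^{100}$ holds, with all CZ intervals of length $\lesssim 2^k$ lumped together. This reduces matters to Proposition~\ref{p:key}: $\|\nu_k*\mathbf 1_A\|_2^2\lesssim_\omega W_k^{-1}|A|$ uniformly over such sparse $A$. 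Removing the factor $k$ is the real difficulty.

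Your sampling plan does not achieve this. The critical sets already have $\approx W_k\,\mathrm{poly}(k)$ elements, so a subsample of $W_k\cdot\mathrm{polylog}$ points gives no entropy reduction. A much smaller sample cannot reproduce the individual counts $|E\cap(E+h)|$, which are typically only $\mathrm{poly}(k)$. The paper proceeds differently:
\begin{enumerate}
\item It symmetrizes to a fixed support $S$ with $|S|\lesssim W$ and $\max_{t\ne0}r_S(t)\lesssim k^2$.
\item It discards collisions with heavy centers. There are only $\mathrm{poly}(k)$ such centers, and they cost $O(1)$.
\item The trimming yields $\mathbb E_cH_{b,c}^2\preceq(k+2)^{-6B}I$. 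With matrix $U$-statistic concentration, this makes an unbiased empirical $\widehat M_A$ built from only $(k+2)^{15B}$ samples have tiny Frobenius and operator norms, with high auxiliary probability.
\item Hanson--Wright plus a union bound over the $\exp(\mathrm{poly}(k))$ possible $\widehat M_A$ controls all of them at once.
\item The identity $\mathbb E_{\mathrm{aux}}\widehat M_A=M_A$, applied for each fixed sign vector, transfers the bound back to every $A$.
\end{enumerate}
What matters is reproducing the quadratic form in expectation with controlled norms, not approximating $E$ or its correlation counts.
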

The remainder of the paper is devoted to proving Theorem~\ref{t:main}; we begin with several preliminary reductions inspired by the argument in \cite{Christ2011WeakTypeSparse}. 

Notice that, since each of the kernels $\mu_k$ is nonnegative,
$$\sup_{k\geq1}
    |\mu_k*f| \leq \sup_{k\geq1}
    \mu_k*|f|. $$Thus there is no loss of generality in assuming that $f \geq 0,$ and we do so henceforth.
\section{Preliminary Reductions}\label{sec:preliminary-reductions}
\subsection{Two standard estimates}
We begin by establishing two bounds that will be useful in what follows. The first is an almost-sure estimate on the $L^\infty(\mathbb{T})$ norm of the Fourier transform of the centered random measures $\nu_k$. We record the estimate in the following proposition.
\begin{proposition}\label{thm: fourier bound nu_k}
   With the notations established in the introduction, almost surely there exists a finite random constant $C_\omega$ such that
   \begin{equation}\label{eq: fourier transform estimate nu}
       \|\widehat{\nu_k} \|_{L^\infty(\mathbb{T})} \le C_\omega \sqrt{\frac{k}{W_k}}
   \end{equation}
   for all $ k \ge 1.$
\end{proposition}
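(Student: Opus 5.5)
We follow the standard route: discretize the circle, apply Bernstein's inequality (Theorem~\ref{thm: standard bernstein}) at each grid point, take a union bound, and conclude with Borel--Cantelli. Fix $k$. Since $\nu_k$ is supported in $I_k$, which has $2^{k-1}$ points, and $\|\nu_k\|_{\ell^1}\le 2^{k}/W_k$, we get a Lipschitz bound
\[
|\widehat{\nu_k}(\beta)-\widehat{\nu_k}(\beta')|\lesssim \frac{2^{k}\cdot 2^{k}}{W_k}|\beta-\beta'|.
\]
Let $\Lambda_k\subset\mathbb T$ be a grid of mesh $2^{-3k}$, so $|\Lambda_k|=2^{3k}$. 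Then
\[
\|\widehat{\nu_k}\|_{L^\infty(\mathbb T)}\le \max_{\beta\in\Lambda_k}|\widehat{\nu_k}(\beta)|+O(2^{-k}/W_k),
\]
and the error term is far smaller than $\sqrt{k/W_k}$. This is the only place where the support size enters, and the polynomial loss it causes is harmless.

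Next fix $\beta\in\Lambda_k$ and write
\[
W_k\widehat{\nu_k}(\beta)=\sum_{n\in I_k}Y_n e(-n\beta).
\]
Treat the real and imaginary parts separately. The summands $Z_n=Y_n\cos(2\pi n\beta)$ (respectively with $\sin$) are independent and mean zero, satisfy $|Z_n|\le 1$, and have total variance $\sum_{n\in I_k}\mathbb E|Y_n|^2\le\sum_{n\in I_k}\sigma_n=W_k$. Take $t=A\sqrt{kW_k}$. Because $W_k\approx 2^{k/2}\gg k$, for large $k$ we have $t\le W_k$, so $\min\{t^2/\sigma^2,t\}\ge A^2k$ once $A\ge 1$. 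Bernstein's inequality then gives
\[
\mathbb P\big(|W_k\widehat{\nu_k}(\beta)|\ge 2A\sqrt{kW_k}\big)\lesssim e^{-cA^2k}.
\]

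A union bound over $\Lambda_k$ bounds the probability that $\max_{\Lambda_k}|\widehat{\nu_k}|\ge 2A\sqrt{k/W_k}$ by $\lesssim 2^{3k}e^{-cA^2k}$. This is summable in $k$ once $A$ is a sufficiently large absolute constant. By Borel--Cantelli, almost surely there is a random $k_0(\omega)$ such that for all $k\ge k_0$,
\[
\|\widehat{\nu_k}\|_{L^\infty(\mathbb T)}\le 2A\sqrt{k/W_k}+O\big(2^{-k}/W_k\big)\lesssim\sqrt{k/W_k}.
\]
For the finitely many $k<k_0$, the trivial bound $\|\widehat{\nu_k}\|_\infty\le 2^{k}/W_k$ is a finite random quantity, and it can be absorbed into $C_\omega$.

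The argument is routine; the only points needing care are choosing $t$ inside the sub-Gaussian regime of Bernstein's inequality, which holds because $k\ll W_k$, and making the grid fine enough that the Lipschitz error stays below $\sqrt{k/W_k}$.
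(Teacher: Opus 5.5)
Your proof is correct and follows the paper's route: a pointwise Bernstein bound at scale $A\sqrt{k/W_k}$, discretization of $\mathbb T$, a union bound, and Borel--Cantelli. The only difference is cosmetic. The paper discretizes via Bernstein's inequality for trigonometric polynomials of degree $\lesssim 2^k$, which gives a multiplicative comparison on a grid of about $2^k A$ points. You instead use a crude Lipschitz bound on a grid of $2^{3k}$ points with a negligible additive error, which costs nothing since the union bound tolerates any grid of size $e^{O(k)}$.
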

\begin{proof}
    The proof is standard. By Bernstein's inequality (Theorem~\ref{thm: standard bernstein}) we obtain the pointwise bound $$\sup_{\beta \in \mathbb{T}} \mathbb{P}\big(|\widehat{\nu_k }(\beta)| \geq A \cdot k^{1/2} \cdot W_k^{-1/2}\big) \lesssim \operatorname{exp}(-cA^2k),$$ where $c >0$ is an absolute constant and $A > 0$ is a (sufficiently large) fixed constant. Exploiting the smoothness of trigonometric polynomials of fixed degree yields the estimate $$\|\widehat{\nu_k}\|_{L^\infty(\mathbb{T})} \lesssim \max_{\beta \in \Delta_k} |\widehat{\nu_k}(\beta)|,$$ where $$\Delta_k := \Big\{ \frac{j}{2^{k}A}: 1 \leq j \leq 2^k A \Big\} \subset \mathbb{T}.$$ A union bound over the frequency points in $\Delta_k$ then yields $$\mathbb{P}(\|\widehat{\nu_k}\|_{L^\infty(\mathbb{T})} \geq A \cdot k^{1/2} \cdot W_k^{-1/2}) \lesssim 2^k \cdot \operatorname{exp}(-cA^2k) \lesssim k^{-10}$$ if $A$ is sufficiently large. The result then follows by Borel--Cantelli. 
\end{proof}
The second result involves the deterministic approximation $\mathbb{E}(\mu_k)$. We record it below as a lemma; the proof is immediate. 
\begin{lemma}\label{lem: deterministic bound}
    Given a function $f : \mathbb{Z} \rightarrow \mathbb{C}$, $$|\mathbb{E}(\mu_k) \ast f(x)| \lesssim M_{\operatorname{HL}}f(x)$$ uniformly in $x \in \mathbb{Z}$ and $k \geq 1.$
\end{lemma}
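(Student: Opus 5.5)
The plan is to write out the deterministic kernel explicitly and compare it with a normalized interval average. By definition,
\[
\mathbb E(\mu_k)=\frac{1}{W_k}\sum_{n\in I_k}\sigma_n\,\delta_n ,
\]
so for every \(x\in\mathbb Z\),
\[
|\mathbb E(\mu_k)*f(x)|\le \frac{1}{W_k}\sum_{n\in I_k}\sigma_n\,|f(x-n)|.
\]
The single fact needed is a uniform comparison between the weights and the interval length. Since \(\sigma_n=n^{-1/2}\) is decreasing, we have \(\sigma_n\le \sigma_{2^{k-1}}\approx 2^{-k/2}\) for every \(n\in I_k\). On the other hand, \eqref{eq: W_k approx} gives \(W_k\approx 2^{k/2}\). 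Combining these, each weight satisfies
\[
\frac{\sigma_n}{W_k}\lesssim 2^{-k}\qquad\text{for all } n\in I_k,
\]
with an absolute constant independent of \(k\). This is the only step that requires any care, and it is elementary: both bounds on \(\sigma_n\) and \(W_k\) hold with absolute constants.

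Inserting this bound, I will estimate
\[
|\mathbb E(\mu_k)*f(x)|\lesssim 2^{-k}\sum_{0<n\le 2^k}|f(x-n)|\le \frac{2\cdot 2^{k}+1}{2^k}\cdot\frac{1}{2\cdot 2^k+1}\sum_{|n|\le 2^k}|f(x-n)|.
\]
I then take \(N=2^k\) in the definition of \(M_{\mathrm{HL}}\). Because the prefactor \((2\cdot 2^k+1)/2^k\) is at most \(3\), the right-hand side is bounded by \(3M_{\mathrm{HL}}f(x)\), uniformly in \(x\) and \(k\ge 1\).

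I do not expect a genuine obstacle. The only point to check is that the constant does not depend on \(k\), and this follows directly from \eqref{eq: W_k approx}.
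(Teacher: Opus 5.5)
Your proposal is correct and is exactly the immediate argument the paper has in mind (the paper omits the proof as immediate). The pointwise weight bound $\sigma_n/W_k \lesssim 2^{-k}$ on $I_k$, followed by comparison with the centered average of radius $2^k$ in the definition of $M_{\mathrm{HL}}$, gives the estimate with an absolute constant, uniformly in $x$ and $k \ge 1$.
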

\subsection{The high-value exceptional set}

For $k \geq 1$, consider the random set
\[
    \operatorname{supp} \mu_k 
    =\{n\in I_k:X_n=1\}.
\]
By \eqref{eq: ssn approx}, almost surely
there exists a finite random constant \(C_\omega > 0\) such that
\begin{equation}\label{eq: strong lln}
    |\operatorname{supp} \mu_k |\leq C_\omega W_k
\end{equation}
for every \(k\geq 1\), after enlarging \(C_\omega\) to absorb the
finitely many exceptional scales.

Fixing a sufficiently small constant \(c>0\), we define the high-value exceptional set
\begin{equation}\label{eq: exceptional set}
    X_{\text{high}}
    :=
    \bigcup_{k\geq 1}
    \Big(
        \operatorname{supp} \mu_k +\{x\in\mathbb Z:f(x)\geq cW_k\}
    \Big).
\end{equation}
By \eqref{eq: strong lln} and \eqref{eq: W_k approx}, almost surely
\begin{align}
    |X_{\text{high}}| &\leq
    \sum_{k\geq 1}
    |\operatorname{supp} \mu_k |\,
    \big|\{x \in \mathbb Z:f(x)\geq cW_k\}\big| \\
    &\lesssim_\omega \sum_{k \geq 1}\sum_{x \in \mathbb Z} W_k \cdot \mathbf{1}_{\{f(x) \geq c W_k\}}\\  
    & = \sum_{x \in \mathbb Z } \sum_{k: \, cW_k \leq f(x)}W_k \\
    & \lesssim \|f\|_{\ell^1}.
\end{align}
Note that for each $k$, $|f(x-n)| \lesssim W_k$ 
whenever $ x \notin X_{\text{high}}$ and $n \in \operatorname{supp} \mu_k $.

For each \(m\in\mathbb Z\), define
\[
    E_m:=\{x\in\mathbb Z:2^m\leq f(x)<2^{m+1}\},
    \qquad
    f^m:=2^m\mathbf 1_{E_m}.
\]
Since the kernels \(\mu_k\) are nonnegative, up to
an absolute constant we may replace \(f\) with its discretized dyadic version
\[
    \sum_{m\in\mathbb Z}f^m;
\]
we shall make this reduction implicitly below.

With this notation, the preceding high-value restriction implies that only
levels satisfying
\[
    m\leq \ell_k+O(1),
    \qquad
    \ell_k:=\Big\lfloor\frac{k}{2}\Big\rfloor,
\]
can contribute at scale \(k\). Since \(W_k\lesssim 2^{\ell_k}\), we may choose the constant \(c>0\) in
\eqref{eq: exceptional set} sufficiently small so that
\[
    cW_k\leq 2^{\ell_k}
\]
for every \(k\geq1\). Consequently, outside \(X_{\mathrm{high}}\), if the
dyadic level \(E_m\) contributes at scale \(k\) then \(m\leq \ell_k\), and we may parameterize all such levels as
\[
    m=\ell_k-s,
    \qquad 0 \leq s\leq \ell_k;
\]
below, we implicitly restrict $s$ to this range.
\subsection{Calder\'on--Zygmund intervals and block localization}
With $M_{\Delta}$ as in \eqref{eq: dyadic HL}, let 
\[
    E:=\{x \in \mathbb{Z}:M_{\Delta}f(x)\geq 1\}
    =\bigcup_{Q\in\mathcal I}Q,
\]
where \(\mathcal I \subseteq \mathcal D\) is the disjoint family of maximal dyadic intervals
contained in $E$. If we set $g := f- f\cdot \mathbf{1}_E$, then by construction $$\|g\|_{\ell^\infty} \le 1, \qquad \|g\|_{\ell^1} \leq \|f\|_{\ell^1}.$$ Making use of Proposition \ref{thm: fourier bound nu_k} and Lemma \ref{lem: deterministic bound} yields the almost-sure $\ell^2(\mathbb Z)$ estimate
\begin{align}\label{eq: l^2 estimate}
    \|\sup_{k \geq 1} |\mu_k \ast g| \|_{\ell^2}^2 &\lesssim \sum_{k \geq 1} \|\nu_k \ast g\|_{\ell^2}^2 + \|\sup_{k \geq 1} |\mathbb{E}(\mu_k) \ast g|\|_{\ell^2}^2 \\
    & \lesssim \sum_{k \geq 1} \|\widehat{\nu_k}\|_{L^\infty(\mathbb T)}^2 \|g\|_{\ell^2}^2 + \|M_{\operatorname{HL}}g\|_{\ell^2}^2 \notag \\
    & \lesssim_\omega \|g\|_{\ell^2}^2\sum_{k \geq 1} k \cdot W_k^{-1} + \|g\|_{\ell^2}^2 \notag \\
    & \lesssim \|g\|_{\ell^2}^2 \leq \|g\|_{\ell^\infty} \|g\|_{\ell^1}\lesssim \|f\|_{\ell^1}; \notag
\end{align}
we may therefore replace $f$ with $f \cdot \mathbf{1}_E$, and we do so henceforth. 

If we next set
\[
    E_{\text{CZ}}
    :=\bigcup_{Q\in\mathcal I}10Q,
\]
where $10Q$ indicates the interval concentric with $Q$ with 10 times the length, then 
\[
    |E_{\text{CZ}}|
    \lesssim
    \|f\|_{\ell^1}
\]
by the (dyadic) Hardy-Littlewood inequality. Moreover, the maximality of each \(Q\in \mathcal I\) implies that
\[
    \sum_{x\in Q}f(x)
    \lesssim |Q|.
\]

For \(\kappa\in\{0,1/3,2/3\}\), we define the collection of sets
\[
    \mathcal D^\kappa
    :=
    \Big\{
        2^r\big([m,m+1)+\kappa(-1)^r \big):
        m\in\mathbb Z,
        \ r\in\mathbb Z
    \Big\}.
\]
If \(J\) is an interval, we write \(J/3\) for its central third, which we take to be half-open. With this convention in mind, for each
fixed \(r \in \mathbb{Z}\) the central thirds satisfy the partition identity
\[
    \sum_{\kappa\in\{0,1/3,2/3\}}
    \sum_{\substack{J\in\mathcal D^\kappa\\|J|=2^r}}
    \mathbf 1_{J/3}(x)
    =1,
    \qquad x\in\mathbb R.
\]
By arguing in physical space, we next observe that whenever \(x\notin E_{\text{CZ}}\),
\(n\in I_k\) and \(x-n\in Q\in\mathcal I\), the length of $Q$ is necessarily constrained, namely
\[
    |Q|\lesssim 2^k.
\]
In particular, if \(C_{\mathrm{loc}}\) is a sufficiently large absolute constant and
\[
    \mathcal I_k^{\mathrm{sm}}
    :=
    \Big\{
        Q\in\mathcal I:
        |Q|\leq C_{\mathrm{loc}}2^k
    \Big\},
\]
then away from \(E_{\text{CZ}}\) only intervals in
\(\mathcal I_k^{\mathrm{sm}}\) can contribute to the convolution at
scale \(k\).

Consequently, for \(\kappa\in\{0,1/3,2/3\}\) and
\(J\in\mathcal D^\kappa\) with \(|J|=2^{k+4}\), if we define
\[
    f_J^m
    :=
    f^m\mathbf 1_{J/3}
    \mathbf 1_{\bigcup_{Q\in\mathcal I_k^{\mathrm{sm}}}Q},
\]
then the pointwise central-third partition gives the exact identity
\begin{equation}\label{eq: partition identity}
    f^m
    \mathbf 1_{\bigcup_{Q\in\mathcal I_k^{\mathrm{sm}}}Q}
    =
    \sum_{\kappa\in\{0,1/3,2/3\}}
    \sum_{\substack{J\in\mathcal D^\kappa\\|J|=2^{k+4}}}
    f_J^m.
\end{equation}
Continuing with the theme of spatial localization, if we define
\[
    T_Jh
    :=
    \nu_k*\big(h\cdot \mathbf 1_{J/3}\big),
    \qquad |J|=2^{k+4},
\]
then 
\[
    J/3+I_k\subseteq J
\]
and thus \(T_Jf_J^m\) is supported in \(J\).

Note that, by construction, each dyadic piece satisfies the ``occupancy" estimate
\[
    |\operatorname{supp}f_J^m|
    \lesssim 2^{k-m},
    \qquad |J|=2^{k+4}.
    \]
Indeed, 
\begin{align}
    2^m|\operatorname{supp}f_J^m|
    &\leq
    \sum_{\substack{Q\in\mathcal I_k^{\mathrm{sm}}\\Q\cap J/3\neq\varnothing}}
    \sum_{x\in Q}f(x) \\
    &\lesssim
    \sum_{\substack{Q\in\mathcal I_k^{\mathrm{sm}}\\Q\cap J/3\neq\varnothing}}
    |Q| \\
    &\lesssim 2^k,
\end{align}
since the intervals $Q$ in the sum  are disjoint, meet $J/3$ and have length $\lesssim 2^k.$
For future reference, we call explicit attention to the following estimate: if
\[
    m=\ell_k-s,
    \qquad
    \ell_k=\Big\lfloor\frac{k}{2}\Big\rfloor,
    \qquad
    s \geq 0
\]
then
\begin{equation}\label{eq: occupancy estimate}
    |\operatorname{supp}f_J^{\ell_k-s}|
    \lesssim
    2^{k-\ell_k+s}
    \lesssim
    2^sW_k,
\end{equation}
since \(W_k\approx2^{k/2}\).

\subsection{Short and long altitude gaps}

Notice that the low dyadic contribution can be handled by the standard $\ell^2(\mathbb Z)$ almost-sure maximal estimate (see \eqref{eq: l^2 estimate}), so in what follows we may restrict to indices with $\ell_k -s \geq 0.$ Given an integer \(s\geq 1\), define the set of scales
\[
    \mathcal K_s
    :=
    \Big\{
        k\geq 1:
        0\leq \ell_k-s,
        \quad
        s>100 \log(k+2)
    \Big\},
\]
and the corresponding square functions
\[
    \mathcal S_sf(x)
    :=
    \Big(
        \sum_{k\in\mathcal K_s}
        |\nu_k*f^{\ell_k-s}(x)|^2
    \Big)^{1/2}.
\]
The following lemma separates the short and long gap contributions.
\begin{lemma}\label{l:short-long-localization}
Away from the exceptional set \(X_{\text{high}}\cup E_{\text{CZ}}\) we may bound
\begin{align}\label{e:SL}
    &\sup_{k\geq 1}|\mu_k*f| \\
    &\lesssim
    M_{\text{HL}}f +
    \sum_{\kappa\in\{0,1/3,2/3\}} \sup_{k\geq 1}
    \Big|
        \sum_{0\leq s\leq 100\log(k+2)}
        \sum_{\substack{J\in\mathcal D^\kappa\\|J|=2^{k+4}}}
        T_{J}f^{\ell_k-s}_{J}
    \Big|  +
    \sum_{s\geq 1}\mathcal S_sf.
\end{align}
Moreover, almost surely
\begin{align}\label{e:S2}
    \|\mathcal S_sf\|_{\ell^2}^2
    \lesssim_\omega
    2^{-s/2}\|f\|_{\ell^1}
\end{align}
for all $s \geq 1.$
\end{lemma}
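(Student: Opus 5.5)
The plan is to prove the pointwise bound \eqref{e:SL} by splitting off the deterministic part and then decomposing the random part by dyadic level and by scale. The $\ell^2$ bound \eqref{e:S2} is then a direct Plancherel computation using Proposition~\ref{thm: fourier bound nu_k}.

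\textbf{Pointwise bound.} Write $\mu_k*f=\mathbb E(\mu_k)*f+\nu_k*f$. By Lemma~\ref{lem: deterministic bound}, the first term is $\lesssim M_{\mathrm{HL}}f$ uniformly in $k$.

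For the random term, replace $f$ by $\sum_m f^m$, as arranged above. Fix $x\notin X_{\mathrm{high}}\cup E_{\mathrm{CZ}}$ and a scale $k$.
\begin{itemize}
\item Outside $X_{\mathrm{high}}$, only levels $m=\ell_k-s$ with $s\geq 0$ contribute to $\mu_k*f(x)$.
\item The levels with $m<0$ sum to a function bounded by $1$. They are absorbed by the almost-sure $\ell^2$ estimate \eqref{eq: l^2 estimate}, which is why we restrict to $\ell_k-s\geq 0$.
\item Outside $E_{\mathrm{CZ}}$, only intervals $Q\in\mathcal I_k^{\mathrm{sm}}$ meet $x-I_k$. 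So $\nu_k*f^m(x)=\nu_k*\big(f^m\mathbf 1_{\bigcup_{Q\in\mathcal I_k^{\mathrm{sm}}}Q}\big)(x)$.
\end{itemize}
I need to check this last equality holds simultaneously for the $\mu_k$ part and the $\mathbb E(\mu_k)$ part. If it fails for the latter, I would instead localize $\mu_k*f^m$ first and only afterwards subtract $\mathbb E\mu_k*f^m$, bounding that difference by $M_{\mathrm{HL}}f$ once more.

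Applying the partition identity \eqref{eq: partition identity} then gives
\[
\nu_k*f^{\ell_k-s}(x)=\sum_{\kappa\in\{0,1/3,2/3\}}\;\sum_{\substack{J\in\mathcal D^\kappa\\|J|=2^{k+4}}}T_Jf_J^{\ell_k-s}(x).
\]
Now split the range of $s$:
\begin{itemize}
\item For $0\le s\le 100\log(k+2)$, keep this localized form. Taking $\sup_k$ and using the triangle inequality over $\kappa$ produces the middle term of \eqref{e:SL}.
\item For $s>100\log(k+2)$ (which forces $s\geq1$), undo the localization, leaving $\nu_k*f^{\ell_k-s}$ with $k\in\mathcal K_s$. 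Exchanging the sum and the supremum gives
\[
\sup_k\Big|\sum_{s}\nu_k*f^{\ell_k-s}\Big|\le\sum_{s\geq1}\sup_{k\in\mathcal K_s}|\nu_k*f^{\ell_k-s}|\le\sum_{s\ge1}\mathcal S_sf,
\]
since a supremum is dominated by an $\ell^2$ sum.
\end{itemize}

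\textbf{Square function bound.} By Plancherel and Proposition~\ref{thm: fourier bound nu_k},
\[
\|\mathcal S_sf\|_{\ell^2}^2=\sum_{k\in\mathcal K_s}\|\nu_k*f^{\ell_k-s}\|_{\ell^2}^2\lesssim_\omega\sum_{k\in\mathcal K_s}\frac{k}{W_k}\,2^{2(\ell_k-s)}|E_{\ell_k-s}|.
\]
Set $m=\ell_k-s$. Since $W_k\approx 2^{\ell_k}$, each summand is
\[
\approx k\,2^{-s}\cdot 2^{m}|E_m|.
\]
The condition $s>100\log(k+2)$ gives $k\le e^{s/100}$, hence $k2^{-s}\lesssim 2^{-s/2}$. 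For fixed $s$, each level $m$ arises from at most two values of $k$, namely those with $\lfloor k/2\rfloor=m+s$. Therefore
\[
\|\mathcal S_sf\|_{\ell^2}^2\lesssim_\omega 2^{-s/2}\sum_m 2^m|E_m|\lesssim 2^{-s/2}\|f\|_{\ell^1}.
\]

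\textbf{Main obstacle.} The analytic content is light. The step I expect to need the most care is the bookkeeping in the pointwise bound: the exceptional sets must justify discarding, for every $k$ simultaneously, both the high levels and the large intervals $Q$. In particular, the reduction to $f\mathbf 1_E$, to $\mathcal I_k^{\mathrm{sm}}$, and to $\ell_k-s\ge0$ must be compatible with the signed kernel $\nu_k$ and not only with $\mu_k$.
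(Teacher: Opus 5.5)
Your proposal is correct and follows the paper's proof essentially step for step. It uses the same split into $\mathbb E(\mu_k)*f$ plus centered terms, the same partition identity with the $\ell^2\hookrightarrow\ell^\infty$ exchange for the long gaps, and the same Plancherel computation via Proposition~\ref{thm: fourier bound nu_k}, with $k\lesssim 2^{s/2}$ on $\mathcal K_s$ and bounded multiplicity of $k\mapsto \ell_k-s$. Your compatibility worry resolves as you suggest, a point the paper leaves implicit: the $\mathcal I_k^{\mathrm{sm}}$ localization only uses $\operatorname{supp}\nu_k\subseteq I_k$, while the high-level truncation must be applied to $\mu_k*f$ before subtracting $\mathbb E(\mu_k)*\sum_{m\le \ell_k}f^m\le \mathbb E(\mu_k)*f\lesssim M_{\mathrm{HL}}f$.
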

\begin{proof}
Outside the high-value exceptional set \(X_{\text{high}}\), only levels
\(m \le \ell_k\) can contribute. We thus write
\(m=\ell_k-s\), under the conditions \(s\geq 0\) and $\ell_k -s \geq 0$. Off \(E_{\text{CZ}}\), the localization
above decomposes each such level into the functions
\(f^{\ell_k-s}_{J}\). By the partition identity in \eqref{eq: partition identity} and the triangle inequality, 
\begin{align}\label{eq: scale splitting}
    &\sup_{k \geq 1} |\mu_k \ast f| \\
    & \leq \sup_{k \geq 1} |\mathbb{E}(\mu_k)\ast f| + \sum_{\kappa \in\{0,1/3,2/3\}} \sup_{k \geq 1 } \Big| \sum_{0 \leq s \leq 100 \log(k+2)} 
    \sum_{\substack{J\in\mathcal D^\kappa \\|J|=2^{k+4}}}
    T_Jf_J^{\ell_k-s} \Big| \notag\\
    &+ \sup_{k \geq 1 } \sum_{\substack{0 \leq \ell_k -s \\ s > 100\log(k+2)}}
     |\nu_k \ast f^{\ell_k-s} |. \notag
\end{align}
For the second sum, we make use of the embedding $\ell^2 \hookrightarrow \ell^\infty$ and obtain 
\begin{align}\label{eq: square functions through embedding}
    \sup_{k \geq 1 } \sum_{\substack{0 \leq \ell_k -s \\ s > 100\log(k+2)}}
     |\nu_k \ast f^{\ell_k-s} | \leq \sum_{s \geq 1} \sup_{k \in \mathcal{K}_s} |\nu_k \ast f^{\ell_k-s}| \leq \sum_{s \geq 1} \mathcal{S}_sf.
\end{align}
Since, by Lemma \ref{lem: deterministic bound}, the deterministic expectation in \eqref{eq: scale splitting} satisfies
\[
    \sup_{k \geq 1}|\mathbb E(\mu_k)*f|
    \lesssim M_{\text{HL}}f,
\]
it remains only to prove \eqref{e:S2}. Applying Proposition \ref{thm: fourier bound nu_k}, we find that, almost surely and for all $s \geq 1$, 
\begin{align}\label{e:Long1}
    \|\mathcal S_sf\|_{\ell^2}^2
    &=
    \sum_{k\in\mathcal K_s}
    \|\nu_k*f^{\ell_k-s}\|_{\ell^2}^2\\
    &\leq
    \sum_{k\in\mathcal K_s}
    \|\widehat{\nu_k}\|_{L^\infty(\mathbb{T})}^2
    \|f^{\ell_k-s}\|_{\ell^2}^2\\
    &\lesssim_\omega
    \sum_{k\in\mathcal K_s}
    \frac{k}{W_k}
    2^{\ell_k-s}
    \|f^{\ell_k-s}\|_{\ell^1}\\
    &\lesssim 
    2^{-s}
    \sum_{k\in\mathcal K_s}
    k\|f^{\ell_k-s}\|_{\ell^1}.
\end{align}
For fixed \(s\), the map \(k\mapsto \ell_k-s\) has uniformly bounded
multiplicity, and if \(k\in\mathcal K_s\) then
\[
    k+2\leq \exp\Big(\frac{s}{100}\Big) \qquad \Rightarrow \qquad 
    k\lesssim 2^{s/2};
\]
substituting this in \eqref{e:Long1} yields the result.
\end{proof}

The takeaway from this section is that the main difficulty in proving
Theorem \ref{t:main} is the short-gap contribution appearing in
Lemma \ref{l:short-long-localization}. In the following section, we will reduce its control to a
restricted \(\ell^2\) estimate for convolutions with indicators of sets of
bounded size.

\section{The Key Restricted Estimate}
The key quantitative estimate in our paper is the following proposition.

\begin{proposition}\label{p:key}
Fix a constant \(C_1 \geq 1\). Almost surely, there exists a finite random
constant
\[
    C_{\omega,C_1} > 0
\]
such that the following holds.

For every \(k\geq 1\), every shifted-dyadic interval \(J \in \mathcal{D}^\kappa\) with
\[
    |J|=2^{k+4}, \qquad \kappa \in \{0,1/3,2/3\}
\]
and every set \(A\subset J/3\) satisfying
\begin{equation}\label{eq: sparse sets}
    |A|\leq C_1W_k (k+2)^{100},
\end{equation}
one has
\[
    \|\nu_k*\mathbf 1_A\|_{\ell^2}^2
    \leq
    C_{\omega,C_1}
    W_k^{-1}\,|A|.
\]
The estimate is simultaneous over all admissible choices of
\(k, J,\kappa\), and \(A\).
\end{proposition}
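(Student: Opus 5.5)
Write $N:=2^k$, $S:=\operatorname{supp}\mu_k$ and $W:=W_k$. The plan is to expand the square and isolate the diagonal, which already has the right size, from an off-diagonal random quadratic form that has to be controlled uniformly in $A$. We have
\[
\|\nu_k*\mathbf 1_A\|_{\ell^2}^2=W^{-2}\sum_{n,n'\in I_k}Y_nY_{n'}\,r_A(n-n'),\qquad r_A(h):=|A\cap(A+h)|.
\]
The diagonal $n=n'$ contributes $W^{-2}|A|\sum_{n}Y_n^2\lesssim W^{-2}|A|\,(|S|+W)\lesssim_\omega W^{-1}|A|$ by \eqref{eq: strong lln}. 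So it remains to show that, almost surely and simultaneously over all admissible $(k,J,\kappa,A)$,
\[
\Big|\sum_{n\neq n'}Y_nY_{n'}r_A(n-n')\Big|\lesssim_\omega W|A|.
\]

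\emph{Step 1: symmetrization.} I would first pass to a model in which the support is frozen. Using a decoupling inequality for Bernoulli chaoses, or a symmetrization $Y_n\mapsto\varepsilon_n X_n$ up to the deterministic mean terms (which are handled by Lemma~\ref{lem: deterministic bound}-type convolution bounds against $\sigma_n$), I would condition on the support $S$. This reduces matters to bounding Rademacher quadratic forms $\sum_{n\neq n'\in S}\varepsilon_n\varepsilon_{n'}r_A(n-n')$ with $S$ fixed, $|S|\lesssim_\omega W$.

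\emph{Step 2: Hanson--Wright for a single $A$.} For fixed $A$, set $B_A:=(r_A(n-n'))_{n\neq n'\in S}$. The Hanson--Wright inequality (a matrix-concentration tool) gives tails
\[
\exp\Big(-c\min\Big\{\frac{t^2}{\|B_A\|_F^2},\frac{t}{\|B_A\|_{\mathrm{op}}}\Big\}\Big).
\]
The expected sizes are as follows. $B_A$ is the Gram matrix of the translates $\mathbf 1_{A+n}$, $n\in S$, so $\|B_A\|_{\mathrm{op}}=\|\sum_{n\in S} c_n\mathbf 1_{A+n}\|^2$-type quantities. Also $\|B_A\|_F^2=\sum_{n\neq n'}r_A(n-n')^2$, and for a random sparse $S$ this is typically about $|A|^2W^2/N\cdot\max r\lesssim |A|^2$ when $|A|\lesssim W\,\mathrm{polylog}$. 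I would prove these Frobenius and operator bounds uniformly in $A$ by first-moment (Bernstein, Theorem~\ref{thm: standard bernstein}) counts of additive coincidences $n-n'=a-a'$ over $n,n'\in S$, $a,a'\in A$. The resulting bound needs to be $W|A|$ up to $k^{O(1)}$ losses only where those losses can be absorbed.

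\emph{Step 3 (main obstacle): uniformity over $A$.} The number of sets $A$ is about $\binom{N}{|A|}\approx e^{|A|k}$. Tails of order $e^{-t/\|B_A\|_{\mathrm{op}}}$ with $t=W|A|$ beat this only if $\|B_A\|_{\mathrm{op}}\lesssim W/k$, which is false in general, so a naive union bound fails. Following the strategy announced in the introduction, I would replace $A$ by a random subsample $A'\subset A$ of density $p$, so that $|A'|\approx p|A|$ and $\mathbb E_{\mathrm{aux}} r_{A'}(h)=p^2r_A(h)$ for $h\neq0$. The quadratic form for $A$ is then $p^{-2}$ times the $\mathbb E_{\mathrm{aux}}$-average of the forms for $A'$. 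The sampled family has complexity about $e^{p|A|k}$, while the required threshold scales as $p^2W|A|$, and Hanson--Wright for $A'$ has operator norm $\lesssim p^2\|B_A\|_{\mathrm{op}}$ plus sampling fluctuations. Choosing $p\approx k^{-C}$ should balance the entropy against the tail. I would then transfer the bound back, using Jensen over $\mathbb P_{\mathrm{aux}}$ on the good event of sampled sets, together with a crude deterministic bound on the complement. The delicate point is the concentration of $r_{A'}$ around $p^2r_A$ uniformly, which may force a dyadic pigeonholing of $A$ by the size of $r_A$.

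Finally, I would sum the failure probabilities over $k$, over the $O(1)$ relevant positions of $J$ modulo translation (the kernel is translation-covariant once $S$ is fixed), and over $|A|$. This total is summable in $k$, and Borel--Cantelli then yields $C_{\omega,C_1}$.
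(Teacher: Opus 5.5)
There is a genuine gap in Step~3. Proportional subsampling does not make the operator norm scale like $p^2\|B_A\|_{\mathrm{op}}$, because collisions $(u,v,b,c)$ sharing a common center $y=b+v=c+u$ form a clique.

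\emph{The obstruction.} Take an admissible $A\supseteq y-S$; this is allowed, since $|S|\approx W$ and $y-S$ fits in $J/3$. Then $r_A(u-v)\geq 1$ for all $u\neq v$ in $S$. After keeping each point with probability $p$, one still has $B_{A'}\geq \mathbf 1_{S_y}\mathbf 1_{S_y}^{\mathsf T}-I$ entrywise, where $S_y\subseteq S$ and $|S_y|\approx pW$. Hence $\|B_{A'}\|_{\mathrm{op}}\gtrsim pW$: the clique loses a factor $p$, not $p^2$.

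\emph{Consequence.} At your threshold $t\approx p^2W|A|$, the Hanson--Wright exponent is at most $t/\|B_{A'}\|_{\mathrm{op}}\lesssim p|A|$. The family of sampled sets has entropy $\approx p|A|\log N\approx p|A|k$. Both scale linearly in $p$, so the factor $k$ that Proposition~\ref{p:key} is meant to remove reappears for every choice of $p$. Pigeonholing by the size of $r_A$ does not help, because the obstruction is the shared center, not large values of $r_A$. Note that the clique's actual contribution to the form is harmless, at most $|S|^2\leq |S||A|\lesssim W|A|$. It is the Hanson--Wright-plus-union-bound method that fails on it.

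\emph{The missing idea: trimming heavy centers.}
\begin{itemize}
\item Set $\tau=|A|(k+2)^{-6B}$. The paper discards every collision whose center $y$ or $z=b-u$ satisfies $|A\cap(y-S)|>\tau$ or $|A\cap(z+S)|>\tau$.
\item Their contribution is bounded deterministically by $\sum_{x\in J_\pm}n_\pm(x)^2\lesssim|A||S|$, and one also shows $|J_\pm|\lesssim(k+2)^{13B}$.
\item Both facts rest on the difference-count bound $\max_{t\neq0}r_S(t)\lesssim(k+2)^2$ for the frozen support. You never establish this. It also disposes of sets with $|A|\leq W/(k+2)^2$ and gives the Frobenius bounds.
\item After trimming, the collision matrices are partial permutations with $\mathbb E_cH_{b,c}^2\preceq(k+2)^{-6B}I$.
\item Through the matrix $U$-statistic inequality, this variance bound makes the empirical matrix built from $m=\lceil(k+2)^{15B}\rceil$ sampled points (together with $|A|$ and $J_\pm$) satisfy $\|\widehat M_A\|_{\mathrm{op}}<(k+2)^{-17B}$ with high auxiliary probability.
\item Hanson--Wright at $t\approx(k+2)^{16B}$ then beats the entropy $\log|\mathcal M_S|\lesssim(k+2)^{16B}$, and unbiasedness transfers the bound back to $M_A$.
\end{itemize}

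\emph{Step~1 also needs repair.} The mean terms cannot be split off. For clustered $A$ one has $\|\mathbb E(\mu_k)*\mathbf 1_A\|_2^2\approx W^{-2}|A|^2$, which exceeds $W^{-1}|A|$ once $|A|\gg W$; so these terms must cancel against the $X_nX_{n'}$ part. Replacing $Y_n$ by $\varepsilon_nX_n$ is not a valid symmetrization, since $X_n$ is not centred. The split $\varepsilon_nY_n=\varepsilon_nX_n-\varepsilon_n\sigma_n$ would be fine, because $\|\widehat{\varepsilon\sigma}\|_\infty^2\lesssim k$. However, that piece is random rather than deterministic, and you would still need a tail version of symmetrization for a supremum. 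The paper instead writes $Y-Y'=\sum_{u\in S}\varepsilon_u\delta_u$ with $S=\{Y\neq Y'\}$. It then returns to $Y$ via convexity of the supremum functional, conditional Jensen, a crude deterministic bound on the bad event, and Markov.
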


\begin{remark}
By Proposition~\ref{thm: fourier bound nu_k} and Plancherel's identity, we have
\begin{equation}\label{eq: fourier bound conv}
    \|\nu_k * \mathbf{1}_A\|_{\ell^2}^2
    \lesssim_\omega
    k\,W_k^{-1}|A|
\end{equation}
almost surely, uniformly over all \(k\geq1\) and all finite sets \(A\). The content of Proposition~\ref{p:key} is that, if one restricts to sets which are sufficiently sparse with respect to the ambient scale, the factor $k$ in the right-hand side of the above inequality may be removed.

%An earlier argument of the second author improves the factor \(k\) to
%\(\log(k+2)\) for sets \(A\) in the cardinality range relevant to the
%preceding proposition; we give a high-level
%sketch of this proof in Appendix~\ref{app:old-argument}. 
\end{remark}
\subsection{Proof of Theorem \ref{t:main} conditional on Proposition \ref{p:key}}
\begin{proof}[Proof of Theorem \ref{t:main} assuming Proposition \ref{p:key}]
By homogeneity, we may set \(\lambda=1\). As before, the low dyadic part is handled by the $\ell^2(\mathbb Z)$ maximal
estimate in \eqref{eq: l^2 estimate}, so throughout the argument below we restrict to indices with $\ell_k - s \ge 0$.

The sets \(X_{\text{high}}\) and \(E_{\text{CZ}}\), together with the
Hardy--Littlewood term arising in Lemma \ref{l:short-long-localization}, have total size
\[
    \lesssim \|f\|_{\ell^1}.
\]
For the long gaps, we bound
\begin{align}
    \Big|\Big\{x : \sum_{s\geq1}\mathcal S_sf(x) \geq1\Big\}\Big|
    &\leq
    \sum_{s\geq1}
    \Big|\Big\{x : \mathcal S_sf(x) \gtrsim s^{-2}\Big\}\Big| \\
    &\lesssim
    \sum_{s\geq1}s^4\|\mathcal S_sf\|_{\ell^2}^2
    \lesssim_\omega \|f\|_{\ell^1}
\end{align}
almost surely. 
It therefore remains to estimate the short-gap expression in
Lemma~\ref{l:short-long-localization}.

For \(s\geq0\) and $\kappa \in \{0, 1/3, 2/3\}$, define the square functions
\[
    S_{s,\kappa}f
    :=
    \Big(
        \sum_{\substack{k\geq 1:\ s\leq100\log(k+2)\\
                        0\leq\ell_k-s\\}}
        \sum_{\substack{J\in\mathcal D^\kappa\\|J|=2^{k+4}}}
        \big|T_Jf_J^{\ell_k-s}\big|^2
    \Big)^{1/2}.
\]
Since $\ell^2(k) \hookrightarrow \ell^\infty(k)$, proceeding as in \eqref{eq: square functions through embedding} implies that the short
part of the \(\kappa\)-grid expression in Lemma \ref{l:short-long-localization} is bounded pointwise by
\[
    \sum_{s\geq0}S_{s,\kappa}f.
\]

For each interval $J$ and each pair $(k, s)$, write
\[
    f_J^{\ell_k-s}=2^{\ell_k-s}\mathbf1_{A_{k,s,J}};
\]
the occupancy estimate in \eqref{eq: occupancy estimate} yields the bound
\[
    |A_{k,s,J}|
    \lesssim W_k2^s \leq W_k (k+2)^{100}
\]
by the short-range condition. Consequently, Proposition~\ref{p:key} applies, after enlarging the constant $C_1$ in the statement once and for all. It yields the almost-sure estimate
\begin{align}
    \|T_Jf_J^{\ell_k-s}\|_{\ell^2}^2
    &=
    2^{2(\ell_k-s)}
    \|\nu_k*\mathbf1_{A_{k,s,J}}\|_{\ell^2}^2 \\
    &\lesssim_\omega
    2^{2(\ell_k-s)}W_k^{-1}
    |A_{k,s,J}| \\
    &\lesssim
    2^{-s}
    \|f_J^{\ell_k-s}\|_{\ell^1}.
\end{align}

Summing over \(k\) and the intervals \(J\), we obtain
\begin{align}\label{e:moderately-square}
    \|S_{s,\kappa}f\|_{\ell^2}^2
    \lesssim_\omega
    2^{-s}
    \sum_{\substack{k\geq 1:\ 0\leq s\leq100\log(k+2)\\
                    0\leq\ell_k-s\\
                    }}
    \|f^{\ell_k-s}\|_{\ell^1} \lesssim 2^{-s} \|f \|_{\ell^1};
\end{align}
arguing as above completes the proof, taking into account previous reductions.
\end{proof} 

\section{Reduction to a Fixed-Support Estimate}
The aim of this section is to reduce Proposition~\ref{p:key} to a convolution estimate for a random signed measure with fixed support on a finite cyclic group, stated below as Proposition \ref{prop: fixed support prop}.

Fixing \(k\geq 1\), a
localization interval \(J\) with \(|J|=2^{k+4}\) and an admissible set
\(A\subset J/3\),
choose
\[
    G:=\mathbb Z/N\mathbb Z,
    \qquad
    N:=2^{k+8}.
\]
After translating \(J\), the supports of \(\mathbf 1_A\), \(\nu_k\) and
\(\nu_k*\mathbf 1_A\) are contained in a single fundamental domain of \(G\).
In particular, the cyclic convolution agrees with the original convolution on
\(\mathbb Z\), and no ``wrap-around" occurs. Since \(N\approx 2^k\), the choice of $N$ affects only absolute constants. 

So, henceforth we work on
\[
    G=\mathbb Z/N\mathbb Z
\]
with 
\[
    W:=W_k\approx N^{1/2}.
\]
Identifying the dyadic interval \(I_k\) with its image in \(G=\mathbb Z/N\mathbb Z\), we define the random function
$$
    Y:G\to[-1,1],
    \qquad
    Y(n):=
    \begin{cases}
        Y_n, & n\in I_k,\\
        0, & n\notin I_k;
    \end{cases}
$$
in this setting, we will also work under the convention that the selector probabilities satisfy $$\sigma_n = \begin{cases}
    n^{-1/2}, \quad n \in I_k \\
    0, \quad n \notin I_k.
\end{cases}$$

For functions $h : G \to \mathbb{C}$, we use
\begin{align}
    \| h \|_p := \Big(\sum_{x \in G} |h(x)|^p \Big)^{1/p}
\end{align}
for all $p \geq 1$, and $$\|h\|_\infty := \max_{x \in G} |h(x)|.$$
We write \(\ell^2(G)\) for the Hilbert space of functions \(h: G\to\mathbb C\), equipped with the norm \(\|\cdot\|_2\) and the standard inner product
\[
\langle u,v\rangle_{\ell^2(G)}
:=
\sum_{x\in G}u(x)\overline{v(x)}.
\]

We now symmetrize the random function $Y$ in order to separate the randomness of its support from that of its values. We let $Y'$ be an independent copy of $Y$; a central object of study will be the random set
\[
    S := \{ n \in G : Y(n) \neq Y'(n) \}.
\]
In particular, by construction, we may write
\[
   Z:=  Y - Y' = \sum_{u\in S}\varepsilon_u\delta_u,
\]
where $(\varepsilon_u)_{u \in S}$ is a vector of independent Rademacher signs supported on $S$.

We record the relevant probabilistic properties of $S$ in the following lemma.

\begin{lemma}\label{lem: S properties}
There exists an absolute constant $C_0 > 0$ such that, for every $k \ge 1$,
\[
    \mathbb{P}
    \Big(
        |S| \le C_0 W,
        \;
        \max_{t \neq 0} r_S(t) \le C_0(k+2)^2
    \Big)
    \ge 1 - N^{-200},
\]
where
\[
    r_S(t) := \bigl|\{(u,v) \in S^2 : u - v = t\}\bigr| = \mathbf{1}_S * \mathbf{1}_{-S}(t).
\]
\end{lemma}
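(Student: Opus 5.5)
Both events concern the independent indicators $\xi_n:=\mathbf 1_{n\in S}$, $n\in I_k$. Since $Y(n)-Y'(n)=X_n-X_n'$ for independent Bernoulli($\sigma_n$) copies, $Y(n)\neq Y'(n)$ exactly when $X_n\neq X_n'$. This event has probability $p_n:=2\sigma_n(1-\sigma_n)\le 2\sigma_n$, independently over $n\in I_k$, and $\xi_n=0$ for $n\notin I_k$. I will bound the failure probability of each of the two events separately by $\tfrac12 N^{-200}$ and then take a union bound.

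\textbf{Size of $S$.} We have $\mathbb E|S|=\sum_{n\in I_k}p_n\le 2W$. Apply Bernstein's inequality (Theorem~\ref{thm: standard bernstein}) to $Z_n=\xi_n-p_n$, whose variance sum is $\le 2W$, with $t=C_0W/2$. This gives
\[
\mathbb P(|S|>C_0W)\lesssim \exp(-cC_0W).
\]
Since $W\approx 2^{k/2}$ and $N=2^{k+8}$, this is $\le \tfrac12 N^{-200}$ for all $k$ beyond some absolute threshold $k_0$, once $C_0$ is large. For $k\le k_0$, note $|S|\le |I_k|\le 2^{k_0}\lesssim W$ deterministically, so enlarging $C_0$ makes this event certain.

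\textbf{Differences.} Fix $t\neq 0$. Then $r_S(t)=\sum_{u}\xi_u\xi_{u-t}$, a sum of products that are not independent, since each $\xi_u$ appears in two terms. I will split the index set into $O(1)$ classes on which the pairs $\{u,u-t\}$ are pairwise disjoint. Concretely, write $u$ according to the parity of $\lfloor u/|t|\rfloor$, with the obvious modification modulo $N$, which costs at most three classes. Within a class the summands $\xi_u\xi_{u-t}$ are independent Bernoulli variables with means $p_up_{u-t}\lesssim \sigma_u\sigma_{u-t}\lesssim 2^{-k}$. There are at most $2^k$ of them, so each class sum $R$ has mean $\mu\lesssim 1$. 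Chernoff's bound $\mathbb P(R\ge L)\le (e\mu/L)^L$ then gives, for $L=C(k+2)^2$,
\[
\mathbb P(R\ge L)\le \exp(-L)\le \exp(-C(k+2)^2).
\]
Bernstein with $t=L$ gives the same kind of decay $\exp(-cL)$. A union bound over the $O(1)$ classes and the $N$ values of $t$ yields failure probability $\lesssim N\exp(-cC(k+2)^2)\le \tfrac12 N^{-200}$, since $\log N\approx k$ and the exponent is quadratic in $k$. Small $k$ is again absorbed into $C_0$ using the trivial bound $r_S(t)\le |S|\le 2^{k}$.

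The only mildly delicate step is the decoupling of the dependent products in $r_S(t)$. The parity-of-blocks splitting handles it cleanly. An alternative is to observe that the dependency graph of the products is a union of paths and to apply Janson's inequality or a graph-colouring concentration bound. Either route leaves a huge margin, because the target $(k+2)^2$ greatly exceeds the typical value $O(1)$.
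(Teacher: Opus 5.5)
Your proposal is correct and follows essentially the same route as the paper: Bernstein's inequality for $|S|$, then a splitting of the sum defining $r_S(t)$ into a bounded number of classes on which the pairs $\{u,u-t\}$ are disjoint, so that the products are independent, followed by a Bernstein/Chernoff tail bound and a union bound over $t$. The differences are cosmetic. The paper two-colours each cyclic orbit of $t$, using that every nonzero $t$ has even order in $\mathbb Z/2^{k+8}\mathbb Z$, whereas you use block parity with a third class for wrap-around; that third class is in fact unnecessary here, since $S\subseteq I_k$ forces every relevant $t$ to satisfy $|t|<2^{k-1}$. You also make the absorption of small $k$ into $C_0$ explicit.
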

\begin{proof}
For $n \in G$, let
\[
    \eta_n := \mathbf{1}_{\{Y(n) \neq Y'(n)\}}, 
    \]
so that
\[
    |S| = \sum_{n \in G} \eta_n.
\]
The variables $(\eta_n)_{n \in G}$ are independent Bernoulli variables, with
\[
    p_n := \mathbb{E}\, \eta_n = 2\sigma_n(1-\sigma_n).
\]
Since $\sigma_n \approx 2^{-k/2} \approx W^{-1}$,
\[
     \mathbb E |S| = \sum_{n \in G} p_n \lesssim W.
\]
Similarly, we have the variance estimate $$\operatorname{Var}(|S|) = \sum_{n \in G} \operatorname{Var}(\eta_n) = \sum_{n \in G}p_n(1-p_n) \lesssim W,$$ so the first point follows by Bernstein's inequality. %for all $\lambda > 0$ 
%\begin{align}
%   \mathbb{P}\bigl( \big||S|-\mathbb E |S|\big| > \lambda) &\lesssim \operatorname{exp}\Big( -c\operatorname{min}\Big( \frac{\lambda^2}{W}, \lambda \Big) \Big)
%\end{align}
%for some absolute constant $c > 0.$ Choosing $C_0$ sufficiently large and %taking $\lambda \approx C_0W$, we obtain 
%\begin{equation}\label{eq: bound on size of S}
%\mathbb P (|S| > C_0 W) \lesssim e^{-c'C_0 W} \leq N^{-201}
%\end{equation}
%for all $k$, after possible enlarging $C_0$ to deal with small scales. 

We now bound the difference multiplicities. Fix $t \in G$, $t \neq 0$. By definition, we have that 
\[
    r_S(t) = \sum_{\substack{n \in I_k \\ n-t \in I_k}} \eta_n \eta_{n-t}.
\]
We may partition $$I_k = I_k^1 \cup I_k^2$$ so that $$I_k^1 \cap (I_k^1-t) = \emptyset, \qquad I_k^2 \cap (I_k^2 -t) = \emptyset.$$ 
Indeed, since \(N=2^{k+8}\), every nonzero \(t\in G\) has even order. We may therefore decompose \(G\) into the cyclic arithmetic progressions generated by \(t\), assign the points in each progression alternately to two classes, and then restrict this partition to \(I_k\).

We thus have that $$r_S(t) = \sum_{\substack{n \in I_k^1 \\ n-t \in I_k}} \eta_n \eta_{n-t} + \sum_{\substack{n \in I_k^2 \\ n-t \in I_k}} \eta_n \eta_{n-t}.$$ For each \(j\in\{1,2\}\), by construction the pairs
\[
\{n,n-t\},
\qquad
n\in I_k^j,\quad n-t\in I_k,
\]
are pairwise disjoint.
Since the variables \((\eta_n)_{n\in G}\) are independent, it follows that the products
$
\eta_n\eta_{n-t}
$
appearing within each of the two sums are mutually independent.

Moreover,
\[
    \mathbb{E}(\eta_n \eta_{n-t}) = p_n p_{n-t} \lesssim W^{-2},
\]
and there are at most $|I_k| \lesssim W^2$ summands. If we let $$R_j(t) := \sum_{\substack{n \in I_k^j \\ n-t \in I_k}}\eta_n \eta_{n-t}, \qquad j \in \{1, 2\},$$ then 
\[
    \mathbb{E} R_j(t) \lesssim 1
\]
and $$\operatorname{Var}(R_j(t)) \lesssim 1$$
for all $j \in \{1, 2\}.$
If $C_0$ is sufficiently large, applying Bernstein's inequality to each $R_j(t)$ yields
\[
    \mathbb{P}\bigl( r_S(t) > C_0(k+2)^2 \bigr) \lesssim \exp\bigl( -cC_0(k+2)^2 \bigr)
\]
for all sufficiently large $k$.

Taking a union bound over the possible nonzero values of $t \in G$ yields
\[
    \mathbb{P}\Big( \max_{t \neq 0} r_S(t) > C_0(k+2)^2 \Big)
    \lesssim N \exp\bigl( -cC_0(k+2)^2 \bigr)
\]
for all $k$, after possibly increasing $C_0$ to deal with the small scales; the result follows by taking a union bound.
\end{proof}

We now state the fixed-support Proposition which will allow us to establish Proposition \ref{p:key}.
\begin{proposition}\label{prop: fixed support prop}
    Let $C_0 > 0$ and $C_1 \geq 1$ be arbitrary finite constants. Let
\[
G=\mathbb Z/N\mathbb Z,
\qquad
N =  2^{k+8},
\qquad
W\approx N^{1/2}.
\]
Let \(S\subseteq G\) be a fixed set satisfying
\[
|S|\le C_0 W
\]
and
\begin{equation}\label{eq: difference bound}
D = D_S := \max_{t\neq 0} r_S(t)
\le C_0 (k+2)^2,
\qquad
r_S(t)
:=
\bigl|\{(u,v)\in S^2:u-v=t\}\bigr|.
\end{equation}
Let \((\varepsilon_u)_{u\in S}\) be a vector of independent Rademacher
variables supported on $S$ and set
\[
Z=\sum_{u\in S}\varepsilon_u\delta_u.
\]

Then there exist finite constants \(C\) and \(k_0\), depending only
on \(C_0\) and $C_1$, such that for every \(k\ge k_0\),
\[
\mathbb P_\varepsilon\Big(
\sup_{\substack{A\subseteq G\\
1\le |A|\le C_1W(k+2)^{100}}}
\frac{\|Z*1_A\|_{\ell^2(G)}^2}{W|A|}
\le C
\Big)
\ge 1-N^{-100}.
\]
\end{proposition}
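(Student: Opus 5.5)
We begin by expanding the square. Since $Z=\sum_{u\in S}\varepsilon_u\delta_u$,
\[
\|Z*\mathbf 1_A\|_{\ell^2(G)}^2=\sum_{u,v\in S}\varepsilon_u\varepsilon_v\,|A\cap(A+u-v)|
=|S|\,|A|+Q_\varepsilon(A),
\qquad
Q_\varepsilon(A):=\sum_{\substack{u,v\in S\\u\neq v}}\varepsilon_u\varepsilon_v\, r_A(u-v).
\]
The diagonal term is at most $C_0W|A|$, so it suffices to show the following: with probability at least $1-N^{-100}$, we have $Q_\varepsilon(A)\le CW|A|$ simultaneously for all admissible $A$.

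Equivalently, set $F:=(Z*\widetilde Z)\mathbf 1_{G\setminus\{0\}}$, where $\widetilde Z(x)=Z(-x)$. Then $Q_\varepsilon(A)=\langle \mathbf 1_A,F*\mathbf 1_A\rangle$, and $F$ is supported on $S-S$ with $|F|\le D$. The crude bound $\|\widehat F\|_\infty\lesssim Wk$ (Proposition~\ref{thm: fourier bound nu_k}) is exactly the factor $k$ we must remove. For a single fixed $A$, $Q_\varepsilon(A)$ is a Rademacher chaos with matrix $M_A(u,v)=r_A(u-v)\mathbf 1_{u\neq v}$. By Hanson--Wright,
\[
\mathbb P_\varepsilon\big(Q_\varepsilon(A)>t\big)\le \exp\Big(-c\min\Big\{\frac{t^2}{\|M_A\|_F^2},\frac{t}{\|M_A\|_{\mathrm{op}}}\Big\}\Big),
\qquad
\|M_A\|_F^2\le \sum_{t\neq0}r_S(t)\,r_A(t)^2\le D\sum_{t\ne0} r_A(t)^2 .
\]
For general $A$ the energy $\sum_t r_A(t)^2$ may be as large as $|A|^3$. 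The resulting tail at level $t=W|A|$ cannot beat the entropy $\exp(O(|A|\log N))$ of the family of sets of size $|A|$. So a direct union bound fails, and this is the uniformity obstruction.

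To circumvent it, we use auxiliary sampling. Fix a small parameter $p=p(|A|,W,k)$ and randomly partition $A$ into $m\approx p^{-1}$ classes $B_1,\dots,B_m$, each of size about $p|A|$. Bilinearity then gives
\[
Q_\varepsilon(A)=\sum_{i,j}\langle \mathbf 1_{B_i},F*\mathbf 1_{B_j}\rangle .
\]
Under $\mathbb P_{\mathrm{aux}}$, the classes have good structure with high probability. We will aim for: $|B_i|\approx p|A|$, and $\max_{t\ne0} r_{B_i\cup B_j}(t)\lesssim k$ (random subsets of density $p$ have tiny additive multiplicities once $p^2|A|\ll 1$). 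Call a pair of sets $(B,B')$ \emph{good} if it satisfies these properties. For good pairs the Frobenius norm of the bilinear chaos matrix improves to $\lesssim Dk\,|B||B'|$, and its operator norm is bounded by the row sums $\sup_u\sum_{v\in S}r_{B,B'}(u-v)\lesssim \min\{|B||B'|,\,k|S|\}$. The family of good pairs at size $b\approx p|A|$ has cardinality only $N^{O(b)}$. Hanson--Wright plus a union bound (over sizes, pairs and $k$) then yields, on one $\varepsilon$-event of probability $\ge 1-N^{-100}$,
\[
\langle \mathbf 1_{B},F*\mathbf 1_{B'}\rangle\le C\,p\,W\,|A|\quad\text{for all good pairs } (B,B') \text{ of size}\approx p|A|.
\]
The relevant parameters are $W^2\approx N$, $D\lesssim k^2$ and $|A|\le C_1Wk^{100}$. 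Provided $p$ is chosen so that $(pW|A|)^2/(Dk\,b^2)\gtrsim b\log N$ and the operator-norm constraint also holds, summing over the $m^2\approx p^{-2}$ pairs gives $Q_\varepsilon(A)\lesssim p^{-1}W|A|$. This is too lossy as stated. To fix it, we refine the sum: we will use that cross terms with $i\ne j$ are themselves centered in the auxiliary randomness, applying a decoupling/matrix concentration (matrix Bernstein over the auxiliary partition) to bound $\sum_{i\neq j}$ by its mean $\approx$ the diagonal-free average. This should reduce the count of effective terms to $O(m)$ and produce $Q_\varepsilon(A)\lesssim W|A|$.

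Finally, the transfer back to a deterministic $A$ is by existence. Since the good-partition event has positive $\mathbb P_{\mathrm{aux}}$-probability for each fixed $A$, some good partition exists, and on the single $\varepsilon$-event above every one of its pairs obeys the bound. This argument is deterministic in $A$, so the estimate holds simultaneously for all $A$. The main obstacle, which we expect to consume most of the work, is the parameter balance in the sampling step. We must choose $p$ small enough that the sampled sets have multiplicities $O(k)$ and low entropy, yet large enough that the $p^{-1}$ losses from reassembling $A$ are absorbed. This hinges on showing that the off-diagonal cross terms concentrate (via matrix Bernstein in the auxiliary variables) rather than being summed with the triangle inequality. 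The cardinality ceiling $|A|\le C_1W(k+2)^{100}$ and $D\lesssim k^2$ enter exactly here, through the condition $k\ge k_0$.
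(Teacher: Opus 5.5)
There is a genuine gap, and it sits where you anticipate trouble: reassembling $Q_\varepsilon(A)$ from the pieces.

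Your union bound controls each good pair only at the worst-case scale $\langle \mathbf 1_B,F*\mathbf 1_{B'}\rangle\le CWb$, with $b=p|A|$. Summing over the $\approx p^{-2}$ pairs gives $Q_\varepsilon(A)\lesssim p^{-1}W|A|$. Your condition $p^2|A|\ll1$ forces $p^{-1}\gg|A|^{1/2}$, which is a polynomial loss for $|A|\approx W$.

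The proposed repair is circular, because the cross terms are not centered in the auxiliary randomness. Since $F(0)=0$, and two distinct points of $A$ land in the same class with probability $\frac{b-1}{|A|-1}$, one has $\mathbb E_{\mathrm{aux}}\sum_{i\neq j}\langle \mathbf 1_{B_i},F*\mathbf 1_{B_j}\rangle=\bigl(1-\frac{b-1}{|A|-1}\bigr)Q_\varepsilon(A)$. Concentration of the cross sum about its mean therefore returns $Q_\varepsilon(A)$ itself. It says nothing about its size, and for fixed $\varepsilon$ that size is a deterministic function of $A$.

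What you need is a bound on each sampled object at the \emph{typical} scale, about $p^2W|A|$ per pair. Equivalently, you need a correctly normalized unbiased estimator that is $O(1)$ off a small bad event. This is what the paper does, and it does not partition $A$ at all.
It draws only $m=\lceil(k+2)^{15B}\rceil$ points $b_1,\dots,b_m\in A$, so the family $\mathcal M_S$ of possible sampled matrices has $\log|\mathcal M_S|\lesssim(k+2)^{16B}$. Via the Hoeffding decomposition and the matrix $U$-statistic inequality, it shows that $\widehat M_A=\frac{|A|}{Wm(m-1)}\sum_{i\neq j}H_{b_i,b_j}$ satisfies $\|\widehat M_A\|_F<(k+2)^{-9B}$ and $\|\widehat M_A\|_{\mathrm{op}}<(k+2)^{-17B}$ with auxiliary probability $1-O((k+2)^{-10B})$ (Lemma~\ref{lem:sampling}). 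After Hanson--Wright and a union bound over good matrices, it transfers back by \emph{averaging}, $\varepsilon^{\mathsf T}M_A\varepsilon=\mathbb E_{\mathrm{aux}}\varepsilon^{\mathsf T}\widehat M_A\varepsilon$, using the crude bound $|\varepsilon^{\mathsf T}\widehat M_A\varepsilon|\le D|A|/W$ on bad samples (Corollary~\ref{cor:sampling-transfer}). It does not rely on the existence of a single good partition.

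The second missing ingredient is the heavy-center trimming. Without it, even a correctly scaled union bound fails, because your good-pair conditions (class sizes and $\max_{t\neq0}r_{B_i\cup B_j}(t)\lesssim k$) do not control operator norms. Here is an example:
\begin{itemize}
\item Take $|S|\approx W$ and $A=P+S$ with $|P|=\ell\le(k+2)^{100}$ generic. This is an admissible set with $|A|\approx\ell W$.
\item For $b\gg\ell$, a typical class meets each translate $x+S$, $x\in P$, in $\approx b/\ell$ points.
\item The collisions $(u,v,x+u,x+v)$ then fill a block of size $\approx b/\ell$ with entries $\ge1$ in a nonnegative matrix, so $\|M_{B,B'}\|_{\mathrm{op}}\gtrsim b/\ell$.
\item At the required level $t\approx Wb^2/|A|\approx b^2/\ell$, Hanson--Wright gives at best $e^{-cb}$.
\item That cannot beat the $\le N^{2b}=e^{O(bk)}$ pairs of $b$-sets.
\end{itemize}
You lose a factor $\approx\log N\approx k$, which is precisely the factor you set out to remove.

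The paper first discards every collision with a heavy center: a center $y$ with $|A\cap(y-S)|>\tau$, or $z$ with $|A\cap(z+S)|>\tau$, where $\tau=|A|(k+2)^{-6B}$. The total contribution of these collisions is bounded deterministically by $\sum_{x\in J_\pm}n_\pm(x)^2\lesssim|A||S|$. This uses $D\lesssim(k+2)^2$ to make the translates $\mathbf 1_{y-S}$ nearly orthogonal. Moreover $|J_\pm|\lesssim(k+2)^{13B}$, so recording $J_\pm$ costs little entropy (Lemma~\ref{lem:heavy-centers}). The surviving collision matrices are partial permutations with $\mathbb E_cH_{b,c}^2\preceq(\tau/|A|)I$ for every $b\in A$. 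This variance bound is exactly what drives the operator-norm estimate for $\widehat M_A$.
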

\subsection{Proof of Proposition \ref{p:key} conditional on Proposition \ref{prop: fixed support prop}}
\begin{proof}[Proof of Proposition \ref{p:key} assuming Proposition \ref{prop: fixed support prop}] 
Fix $k \ge 1$, a localization interval $J$ with $|J| = 2^{k+4}$, and identify the relevant functions with functions on
\[
    G = \mathbb{Z}/N\mathbb{Z}, \qquad N = 2^{k+8},
\]
as above.
For $y \in \ell^2(G)$, define
\[
    F(y) := \sup_{\substack{A \subseteq G \\ 1 \le |A| \le C_1W(k+2)^{100}}} \frac{\|y \ast \mathbf{1}_A\|_2^2}{W|A|}.
\]
As above, let $Y'$ be an independent copy of $Y$ and let
\[
    S = \{ n \in G : Y(n) \neq Y'(n) \}.
\]
Then
\[
   Y - Y' = \sum_{u \in S} \varepsilon_u \delta_u
\]
where, conditional on $S$, the variables $(\varepsilon_u)_{u \in S}$ are independent Rademacher signs.

By Lemma~\ref{lem: S properties}, with probability at least $1 - N^{-100}$,
\[
    |S| \le C_0 W, \qquad \max_{t \neq 0} r_S(t) \le C_0(k+2)^2.
\]
Conditional on any support $S$ satisfying these bounds, Proposition~\ref{prop: fixed support prop} yields
\[
    \mathbb{P}_\varepsilon \big( F(Y-Y') \le C \mid S \big) \ge 1 - N^{-100};
\]
combining these two results yields
\[
    \mathbb{P}_{Y,Y'} \big(F(Y-Y') > C \big)\lesssim N^{-100}.
\]

We now note that the functional $F$ is convex, since it is a supremum of convex functions. Moreover,
\[
    \mathbb{E}_{Y'}[Y - Y' \mid Y] = Y,
\]
because $Y'$ is independent of $Y$ and has mean zero. Therefore, the conditional form of Jensen's inequality yields the pointwise bound 
\[
    F(Y) \le \mathbb{E}_{Y'} \left[ F(Y-Y') \mid Y \right].
\]

Using \(|Y-Y'|\le1\) and Young's inequality, we have the crude  deterministic bound
$$
F(Y-Y')\le C_1N(k+2)^{100}.
$$
Hence, splitting the conditional expectation according to whether \(F(Y-Y')\le C\) yields
\begin{align}
F(Y) &\le \mathbb E_{Y'}\bigl[F(Y-Y')\mid Y\bigr] \\
&\le C+ C_1N(k+2)^{100} \mathbb P_{Y'} \big(F(Y-Y')>C \,\big|\, Y \big).
\end{align}
It follows that
$$
\mathbb P_Y\big(F(Y)>C+1\big)
\leq 
\mathbb P_Y \Big(
\mathbb P_{Y'}
\big(
F(Y-Y')>C
\,\big|\,
Y
\big)
\geq
\frac{1}{C_1N(k+2)^{100}}
\Big).
$$

Therefore, by Markov's inequality,
\begin{align}
\mathbb P_Y\big(F(Y)>C+1\big)
&\le
C_1N(k+2)^{100}
\mathbb P_{Y,Y'}\big(F(Y-Y')>C\big) \\
&\lesssim_{C_1}
(k+2)^{100}N^{-99} \\
& \lesssim_{C_1} N^{-50}.
\end{align}

The Borel--Cantelli lemma therefore implies that, almost surely, there exists a finite random threshold $k_* = k_*(\omega)$ such that
\[
    F(Y) \lesssim_{\omega, C_1} 1
\]
for every $k \ge k_*$.

Returning from the cyclic model to $\mathbb{Z}$, we conclude that almost surely
\[
    \|\nu_k \ast \mathbf{1}_A\|_{\ell^2(\mathbb{Z})}^2 = W^{-2} \|Y \ast \mathbf{1}_A\|_2^2 \lesssim_{\omega, C_1} W^{-1}|A|
\]
for every $k \ge k_*$ and every admissible $A \subseteq J/3$. Enlarging the implicit random constant to account for the finitely many values $k < k_*$ proves Proposition~\ref{p:key}.
\end{proof}
With this in mind, the remainder of the paper will be devoted to proving Proposition \ref{prop: fixed support prop}.
\section{Overview of the Proof of Proposition \ref{prop: fixed support prop}}\label{sec: proof overview}
At its core, the proof of Proposition \ref{prop: fixed support prop} is organized around a sampling argument whose purpose is to replace a very large family of quadratic forms by a much smaller family of well-controlled empirical representatives.

The first step involves expanding the convolution square and performing a series of preliminary reductions. Specifically, note that, fixing $A \subseteq G$, we may write 
\begin{equation}\label{eq: conv expansion}
\frac{\|Z \ast \mathbf{1}_A\|_2^2}{W|A|} = \frac{|S|}{W} + \frac{1}{W |A|}\sum_{\substack{u, v \in S \\ u \neq v}} \varepsilon_u \varepsilon_v r_A(u-v),
\end{equation}
where $$r_A(t) = |\{(b, c ) \in A^2: b-c = t\}|.$$
If $|A|$ is small, specifically $$|A| \le \frac{W}{(k+2)^2},$$ then the quadratic form in \eqref{eq: conv expansion} may be controlled directly, making use of the difference-count bound $$D = \max_{t \neq 0} r_S(t) \lesssim (k+2)^2$$ in the statement of Proposition \ref{prop: fixed support prop}. 

Moreover, we may express the off-diagonal term in \eqref{eq: conv expansion} in the form 
\begin{equation}\label{eq: collision expansion}
\sum_{\substack{u, v \in S \\ u \neq v}} \varepsilon_u \varepsilon_v r_A(u-v) = \sum_{(u, v, b, c ) \, \operatorname{a \, collision}} \varepsilon_u \varepsilon_v,
\end{equation}
where a \emph{collision} is defined as a tuple $(u, v, b, c) \in S \times S \times A \times A$ satisfying $u-v = b-c$ and $u \neq v$. Associated with a collision $(u, v, b, c)$, we may define two \emph{centers} $$y := b+v = c+u, \qquad z := b-u = c-v,$$ so that $$b, c \in A \cap (y-S) \quad \operatorname{and} \quad b, c \in A \cap (z+S).$$ We will fix a parameter $\tau > 0$ and say that a center $x \in G$ is \emph{heavy} if $$|A \cap (x-S)| >\tau \quad \operatorname{or} \quad |A \cap (x + S)| > \tau.$$ Choosing $\tau$ appropriately allows us to bound the total normalized contribution in \eqref{eq: collision expansion} coming from collisions associated with heavy centers: $$\frac{1}{Wa} \bigl|\sum_{\substack{(u, v, b, c) \, \operatorname{heavy \,collision}}}\varepsilon_u \varepsilon_v\bigr| \lesssim \frac{|S|}{W}.$$

After this ``trimming" procedure, the remaining contribution may be expressed as a quadratic form $$\varepsilon^{\mathsf T} M_A \varepsilon,$$ where $M_A$ is a real symmetric matrix with zeros on its diagonal. Specifically, the matrix $M_A$ is formed as an average $$M_A = \frac{|A|}{W}\,\mathbb E_{b,c\in A} H_{b,c},$$ where the matrix $H_{b, c}$ records the surviving collisions associated with the pair $(b,c) \in A^2.$

The trimming step and the cardinality restrictions on $A$ allow us to establish useful deterministic bounds on the operator and Frobenius norms of the matrices $H_{b, c}$; see Lemma \ref{lem: frobenius bounds} and Lemma \ref{lem: operator norm bounds}.

If \(A\) were fixed, the estimates would allow us to conclude by applying Hanson--Wright (stated as Lemma \ref{lem:HW} below) to control the quadratic form $\varepsilon^{\mathsf T} M_A \varepsilon$ directly. The main difficulty is uniformity in \(A\), since the collection of admissible sets is far too large for a direct union bound over the corresponding matrices \(M_A\). The key idea for overcoming this difficulty is an auxiliary sampling procedure, which will be developed in Section \ref{sec:sampling}. The idea is to replace the full average defining \(M_A\) by an empirical average over a small random sample from \(A\), while preserving the analytic information needed for the argument. Specifically, we fix a parameter $m$ (which will be taken to be polynomial in $k$) and introduce the sampled matrices $$\widehat M_A := \frac{|A|}{Wm(m-1)}
\sum_{i\neq j}H_{b_i,b_j},
$$
where $b_1, \dots, b_m$ are chosen uniformly at random from $A.$ Notice that 
\[
\mathbb E_{\mathrm{aux}}\widehat M_A=M_A,
\]
so that $\widehat M_A$ should be viewed as a random empirical approximation to \(M_A\); here and throughout, we use $\mathbb E_{\mathrm{aux}}, \ \mathbb P_{\mathrm{aux}}$ to denote random behavior with respect to this auxiliary sampling.

We will say that a sampled matrix \(\widehat M_A\) is ``good" when its Frobenius and operator norms satisfy the bounds required in the later Hanson--Wright argument. For each fixed \(A\), the auxiliary sampling produces a good matrix with overwhelming probability; see Lemma \ref{lem:sampling}. Conceptually, this is an approximation statement. Indeed, since \(M_A\) itself already has small norms, a typical empirical matrix \(\widehat M_A\) inherits the same favorable Frobenius and operator-norm profiles with high probability. The adjective good is therefore meant to record precisely the norm information that a typical empirical representative of \(M_A\) possesses and that will be needed in the next stage of the proof.

Although \(M_A\) depends on the entire set \(A\), a sampled matrix \(\widehat M_A\) is determined by much less information. Indeed, once the cardinality $|A|$ and the set of heavy centers associated to $A$ are specified, it is determined by the sampled tuple \(b_1,\ldots,b_m\). Since \(m\) and the size of the set of heavy centers are polynomial in \(k\), the logarithm of the number of possible sampled matrices is itself only polynomial in \(k\). Thus the sampling procedure does not merely approximate \(M_A\), it compresses the enormous family of matrices indexed by \(A\) into a family which is small enough to permit a union bound.

We therefore consider the collection of all good sampled matrices that can arise from an admissible subset \(A\), apply Hanson--Wright to each of them, and take a union bound. This produces a single event in the Rademacher signs on which each element of the set 
\[
\{ |\varepsilon^{\mathsf T}\widehat M_A \varepsilon| : \widehat M_A \text{ good}\}
\]
is bounded uniformly. Using that a sampled matrix is good with high probability together with the identity $$\mathbb E_{\operatorname{aux}} \, \varepsilon^{\mathsf T} \widehat{M}_A \varepsilon = \varepsilon^{\mathsf T} M_A \varepsilon$$ allows us to control the relevant quadratic form uniformly over all admissible sets $A$; see, for instance, Corollary \ref{cor:sampling-transfer}.

Thus the sampling procedure performs three interconnected tasks: it produces empirical approximations of \(M_A\) which, with overwhelming probability, have the required Frobenius and operator-norm bounds; it reduces the complexity of the family from all admissible sets \(A\) to a manageable collection of possible empirical matrices; and its exact unbiased nature allows the resulting uniform Hanson--Wright estimate to be transferred back to the original matrices. This combination of approximation, entropy reduction, and averaging, is the mechanism that converts uniform estimates for each individual \(A\) into a single estimate holding simultaneously for every admissible \(A\). 

Finally, we remark that this use of sampling in seemingly deterministic problems is not novel, see for instance the empirical method of Maurey found in \cite{Pisier1980Maurey}, the proofs of Croot--Sisask almost-periodicity \cite{croot2010probabilistictechniquefindingalmostperiods}, and the probabilistic proofs of the restricted isometry property; see, for instance, Rudelson and Vershynin \cite{RudelsonVershyninSparse}. In all cases, including ours, the underlying philosophy is that a comparatively small random sample can retain the analytic information needed to control a much larger structured family. In the contrapositive: should any extremizing or outlier event occur, it must do so ``often."

\section{A Sampling Principle for Families of Quadratic Forms}\label{sec:sampling}
In this section we record an abstract sampling lemma that will be applied in the proof of Proposition~\ref{prop: fixed support prop}. We first introduce two key probabilistic results that will be used in what follows. 
\subsection{Probabilistic tools}
\begin{lemma}[Hanson--Wright inequality for Rademacher signs]\label{lem:HW}
There exists an absolute constant \(C_{\mathrm{HW}}>0\) such that the following holds. Let \(M\) be a fixed real symmetric matrix with zero diagonal, and let \(\varepsilon\) be a vector of independent Rademacher random variables. Then, for every \(t\ge1\),
$$
\mathbb P\Big(
|\varepsilon^{\mathsf T}M\varepsilon|
>
C_{\mathrm{HW}}
\big(
\sqrt{t}\,\|M\|_F
+
t\,\|M\|_{\mathrm{op}}
\big)
\Big)
\le
2e^{-t}.
$$

\end{lemma}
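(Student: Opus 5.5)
The plan is to derive the Hanson--Wright bound in two steps: first control the moment generating function of the decoupled bilinear form, then return to the original quadratic form. Since \(M\) has zero diagonal, \(\varepsilon^{\mathsf T}M\varepsilon=\sum_{i\neq j}M_{ij}\varepsilon_i\varepsilon_j\) is a homogeneous Rademacher chaos of order two. The first step is decoupling. By the standard decoupling inequality for convex functions of off-diagonal chaos (de la Pe\~na--Montgomery-Smith, or the elementary argument that randomly splits the index set into two halves), we have
\[
\mathbb E\exp\big(\lambda\,\varepsilon^{\mathsf T}M\varepsilon\big)\le \mathbb E\exp\big(4\lambda\,\varepsilon^{\mathsf T}M\varepsilon'\big),
\]
where \(\varepsilon'\) is an independent copy of \(\varepsilon\).

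Next, condition on \(\varepsilon'\). The quantity \(\varepsilon^{\mathsf T}(M\varepsilon')\) is then a linear Rademacher sum with coefficient vector \(M\varepsilon'\). Hoeffding's lemma gives
\[
\mathbb E_\varepsilon \exp(4\lambda\,\varepsilon^{\mathsf T}M\varepsilon')\le \exp\big(8\lambda^2\|M\varepsilon'\|_2^2\big).
\]
It remains to bound \(\mathbb E_{\varepsilon'}\exp(8\lambda^2\|M\varepsilon'\|^2)\). This is the step where some care is needed. I would compare \(\varepsilon'\) with a standard Gaussian vector \(g\): Rademacher vectors are subgaussian, so \(\mathbb E\exp(\langle\theta,\varepsilon'\rangle)\le \mathbb E\exp(\langle\theta,g\rangle)\) for every \(\theta\). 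Using the Gaussian linearisation
\[
\exp(\mu\|x\|^2)=\mathbb E_{h}\exp\big(\sqrt{2\mu}\,\langle h,x\rangle\big),
\]
we can replace \(\varepsilon'\) by \(g\). Diagonalizing \(M^{\mathsf T}M\) with eigenvalues \(s_i^2\) then gives
\[
\mathbb E\exp(\mu\|Mg\|^2)=\prod_i(1-2\mu s_i^2)^{-1/2}\le \exp\big(2\mu\|M\|_F^2\big)
\quad\text{whenever } \mu\|M\|_{\mathrm{op}}^2\le 1/4.
\]
Combining these bounds yields
\[
\mathbb E\exp\big(\lambda\,\varepsilon^{\mathsf T}M\varepsilon\big)\le \exp\big(C\lambda^2\|M\|_F^2\big)
\quad\text{for } |\lambda|\le c/\|M\|_{\mathrm{op}}.
\]

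Finally, apply Markov's inequality (Chernoff) to this MGF bound and optimize over \(\lambda\) in the admissible range. This gives
\[
\mathbb P(|\varepsilon^{\mathsf T}M\varepsilon|>s)\le 2\exp\Big(-c\min\Big(\frac{s^2}{\|M\|_F^2},\frac{s}{\|M\|_{\mathrm{op}}}\Big)\Big).
\]
Setting \(s=C_{\mathrm{HW}}(\sqrt t\|M\|_F+t\|M\|_{\mathrm{op}})\) with \(C_{\mathrm{HW}}\) large makes the minimum at least \(t\), which is the stated form. The main obstacle is the Gaussian comparison step; this is exactly the Rudelson--Vershynin argument. Since this is a classical result, in the paper it suffices to cite Rudelson--Vershynin (\emph{Hanson--Wright inequality and sub-gaussian concentration}) and to note that Rademacher variables have subgaussian norm \(O(1)\).
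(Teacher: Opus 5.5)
Your proposal is correct and takes the paper's approach. The paper simply invokes Rudelson--Vershynin's Theorem~1.1, noting that the zero diagonal gives $\mathbb E\,\varepsilon^{\mathsf T}M\varepsilon=\operatorname{tr}M=0$. Your sketch is the off-diagonal part of their proof: decoupling, Hoeffding conditionally on $\varepsilon'$, Gaussian comparison yielding $\mathbb E\exp(\lambda\,\varepsilon^{\mathsf T}M\varepsilon)\le\exp(16\lambda^2\|M\|_F^2)$ for $|\lambda|\le(\sqrt{32}\,\|M\|_{\mathrm{op}})^{-1}$, and then Chernoff, and its constants check out.
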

The preceding estimate is a form of the Hanson-Wright inequality for Rademacher random variables. It follows from \cite[Theorem 1.1]{rudelson2013hansonwrightinequalitysubgaussianconcentration}, since the zero diagonal condition makes the expectation term equal to zero. 

\begin{lemma}[Matrix-valued \(U\)-statistic inequality]
\label{lem:U statistic}
There exists an absolute constant \(C>0\) such that the following
holds. Let \(m,q\in\mathbb N\) and let
\[
    \xi_1,\ldots,\xi_m
\]
be i.i.d.\ random variables taking values in \(\{1,\ldots,q\}\).
Let
\[
    F:\{1,\ldots,q\}\times\{1,\ldots,q\}
      \longrightarrow \operatorname{Sym}(d\times d)
\]
be a function taking values in the space of real symmetric \(d\times d\) matrices, satisfying
\[
    F(x,y)=F(y,x)
\]
for all \(x,y\). Assume moreover that, for every
\(x,y\in\{1,\ldots,q\}\),
\[
    \mathbb E_{\xi_1}F(x,\xi_1)=0,
    \qquad
    \|F(x,y)\|_{\mathrm{op}}\leq B_U,
\]
and
\[
    \mathbb E_{\xi_1}\bigl[F(x,\xi_1)^2\bigr]
    \preceq v^2I
\]
for some \(B_U,v>0\).

Then, for every \(u\geq1\), with probability at least \(1-e^{-u}\),
\[
    \Big\|
        \sum_{i\neq j}F(\xi_i,\xi_j)
    \Big\|_{\mathrm{op}}
    \leq
    C\left[
        mv\bigl(u+\log(ed)\bigr)
        +
        B_U\sqrt m\,
        \bigl(u+\log(ed)\bigr)^{3/2}
    \right].
\]
\end{lemma}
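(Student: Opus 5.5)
The plan is to reduce the degenerate matrix $U$-statistic to a matrix martingale and apply a matrix Freedman inequality (Tropp's version, which carries the dimensional factor $d$). By the symmetry $F(x,y)=F(y,x)$,
\[
    \sum_{i\neq j}F(\xi_i,\xi_j)=2\sum_{j=2}^m D_j,
    \qquad
    D_j:=\sum_{i<j}F(\xi_i,\xi_j).
\]
Let $\mathcal F_j:=\sigma(\xi_1,\dots,\xi_j)$. Conditionally on $\mathcal F_{j-1}$ the points $\xi_i$ with $i<j$ are frozen, so the degeneracy hypothesis $\mathbb E_{\xi_1}F(x,\xi_1)=0$ gives $\mathbb E[D_j\mid\mathcal F_{j-1}]=0$. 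Thus $(D_j)$ is a martingale difference sequence of symmetric $d\times d$ matrices. Matrix Freedman then says that on the event
\[
    \Big\{\max_j\|D_j\|_{\mathrm{op}}\le R\Big\}\cap\Big\{\Big\|\sum_j\mathbb E[D_j^2\mid\mathcal F_{j-1}]\Big\|_{\mathrm{op}}\le\sigma^2\Big\},
\]
outside probability $e^{-u}$, we have $\|\sum_jD_j\|_{\mathrm{op}}\lesssim\sigma\sqrt{L}+RL$, where $L:=u+\log(ed)$. It therefore suffices to find $R$ and $\sigma^2$, each holding with probability $1-e^{-u}$ after adjusting constants, such that $\sigma\sqrt L+RL$ is at most the right-hand side of the lemma.

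First I control $R$. For fixed $j$ and fixed $\xi_j=y$, the matrix $D_j$ is a sum of $j-1$ independent, mean-zero matrices $F(\xi_i,y)$, using the symmetry of $F$ and degeneracy in the first slot. Each satisfies $\|F(\xi_i,y)\|_{\mathrm{op}}\le B_U$, and the variance proxy is $\sum_i\mathbb E[F(\xi_i,y)^2]\preceq mv^2I$. Matrix Bernstein, conditionally on $\xi_j$, and a union bound over $j\le m$ give
\[
    \|D_j\|_{\mathrm{op}}\lesssim v\sqrt{mL'}+B_UL',
    \qquad L':=L+\log m,
\]
which is my choice of $R$. The $\log m$ is harmless: if $\log m\gtrsim L$, one can instead use the trivial bound $\|D_j\|_{\mathrm{op}}\le mB_U$ in the regime where it is better. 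Alternatively, I would absorb it by running the whole argument with $u$ replaced by $u+\log m$, which the lemma's right-hand side tolerates because $B_U\sqrt m\,L^{3/2}$ already dominates $B_U\sqrt{m}\sqrt{\log m}\,L$ up to constants in the relevant range.

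Next, the predictable variance. Set $G_j(y):=\sum_{i<j}F(\xi_i,y)$. Then
\[
    \mathbb E[D_j^2\mid\mathcal F_{j-1}]=\mathbb E_\xi G_j(\xi)^2
    =\sum_{i<j}\mathbb E_\xi F(\xi_i,\xi)^2+\sum_{i\neq i'<j}\mathbb E_\xi\big[F(\xi_i,\xi)F(\xi_{i'},\xi)\big].
\]
The diagonal part is $\preceq jv^2I$, so summed over $j$ it contributes $\lesssim m^2v^2$, and hence $\sigma\sqrt L\lesssim mv\sqrt L\le mvL$, which is the first term of the lemma. The off-diagonal part is itself a second-order $U$-statistic in $(\xi_i)$ with kernel
\[
    K(x,x'):=\mathbb E_\xi\big[F(x,\xi)F(x',\xi)\big]
\]
(symmetrized). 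This kernel is degenerate, since $\mathbb E_{\xi'}K(x,\xi')=\mathbb E_\xi[F(x,\xi)\,\mathbb E_{\xi'}F(\xi',\xi)]=0$. By Cauchy--Schwarz for the positive semidefinite forms, $\|K(x,x')\|_{\mathrm{op}}\le v^2$; and $\mathbb E_{\xi'}K(x,\xi')^2\preceq B_U^2v^2I$ (crudely).

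The main obstacle is controlling this off-diagonal variance term. I would handle it by a one-level recursion, applying the same martingale/Freedman argument to the kernel $K$ with parameters $(B_U,v)$ replaced by $(v^2,B_Uv)$. This yields, for the off-diagonal part at stage $j$, a bound of order $mB_Uv\sqrt L+mv^2L$ up to logarithms. Summing over $j\le m$ gives
\[
    \sigma^2\lesssim m^2v^2L+m^2B_UvL,
\]
so that
\[
    \sigma\sqrt L\lesssim mvL+m\sqrt{B_Uv}\,L,
    \qquad
    m\sqrt{B_Uv}\,L\le\tfrac12\big(mvL+mB_UL\big).
\]
The term $mB_UL$ is too large, since we need $B_U\sqrt m$, so this crude version loses. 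The fix I would try is to split the off-diagonal variance instead as $\preceq\big(\sum_{i<j}\|F(\xi_i,\cdot)\|\big)$-type bounds using the deterministic estimate $\|D_j\|_{\mathrm{op}}\le R$ already established. That gives
\[
    \mathbb E[D_j^2\mid\mathcal F_{j-1}]\preceq R\,\mathbb E_\xi\big|G_j(\xi)\big|
    \qquad\text{or}\qquad
    \mathbb E[D_j^2\mid\mathcal F_{j-1}]\preceq R^2 I,
\]
and hence $\sigma^2\le mR^2$ and $\sigma\sqrt L\le\sqrt m\,R\sqrt L\lesssim mvL+B_U\sqrt m\,L^{3/2}$. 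This matches the lemma exactly. So the final plan is: use $\sigma^2:=mR^2$ with the Bernstein bound on $R$ from the previous step, and only fall back on the $K$-recursion if the $\log m$ bookkeeping in $R$ fails. If all else fails, I would cite a decoupling inequality of de la Pe\~na--Montgomery-Smith to replace $\xi_j$ by an independent copy and invoke known matrix $U$-statistic bounds (Minsker; Chakrabortty--Kuchibhotla), which have exactly this shape.
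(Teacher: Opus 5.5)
Your skeleton is sound: $\sum_{i\neq j}F(\xi_i,\xi_j)=2\sum_j D_j$ is a matrix martingale, and conditional matrix Bernstein correctly bounds the realized increments. The paper does not argue this way at all; it simply invokes Minsker's moment inequality for degenerate matrix $U$-statistics.

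\textbf{The variance step.} The gap is the predictable variance, which you correctly flag as the main obstacle and then bypass with an invalid step. Your bound $\|D_j\|_{\mathrm{op}}\le R$ is not deterministic. It holds with high probability for the \emph{realized} increment, and you proved it by freezing $\xi_j$ and randomizing $\xi_1,\dots,\xi_{j-1}$. The conditional second moment $\mathbb E[D_j^2\mid\mathcal F_{j-1}]=\mathbb E_\xi G_j(\xi)^2$ does the opposite: it freezes $\xi_1,\dots,\xi_{j-1}$ and averages over a fresh $\xi$. It therefore needs control of $\|G_j(y)\|_{\mathrm{op}}$ for essentially every $y$. That does not follow from the event $\{\max_j\|D_j\|_{\mathrm{op}}\le R\}$, and a union bound over $y$ would cost $\log q$, which the lemma does not permit. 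So neither $\mathbb E[D_j^2\mid\mathcal F_{j-1}]\preceq R^2I$ nor the variant with $R\,\mathbb E_\xi|G_j(\xi)|$ is justified.

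\textbf{The $\log m$ factor.} Your bookkeeping here is also wrong. If $\log m\gtrsim L$ then $m\gg L$, so the trivial bound $mB_U$ is worse, not better. Likewise $L^{3/2}$ does not dominate $\sqrt{\log m}\,L$ in that regime. In the term $RL$ the loss is in fact harmless, because $\log m\le\sqrt m$. But suppose the variance is obtained as $m\max_j\|\mathbb E[D_j^2\mid\mathcal F_{j-1}]\|_{\mathrm{op}}$ after a union bound over $j$. Then you get $\sigma\sqrt L\approx mv\sqrt{L\log m}$, which exceeds $mvL+B_U\sqrt m\,L^{3/2}$, e.g.\ for $d=1$, $u=1$, $v=B_U$, $m\to\infty$. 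In the paper's application $\log m\approx B\log k\ll\log d$, so the loss would be harmless there, but not for the lemma as stated.

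\textbf{Freedman's hypothesis.} Tropp's matrix Freedman inequality requires an almost-sure increment bound. Using it on the event $\{\max_j\|D_j\|_{\mathrm{op}}\le R\}$ needs a truncation-and-recentering step, which you omit.

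\textbf{How to repair it.} The route can be fixed, and would then give a genuine self-contained alternative to the paper's citation.
\begin{enumerate}
\item Bound the variance in $L^p$ with $p=u$, taking no maximum over $j$. First, $\|\sum_j\mathbb E[D_j^2\mid\mathcal F_{j-1}]\|_{\mathrm{op}}\le\sum_j\mathbb E_\xi\|G_j(\xi)\|_{\mathrm{op}}^2$. By Jensen and Minkowski, the $L^p$ norm of the right-hand side is at most $\sum_j\sup_y\bigl(\mathbb E\|G_j(y)\|_{\mathrm{op}}^{2p}\bigr)^{1/p}$.
\item For fixed $y$, $G_j(y)$ is a sum of $j-1$ independent centered matrices bounded by $B_U$ with variance $\preceq jv^2I$. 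Matrix Bernstein in moment form gives $\bigl(\mathbb E\|G_j(y)\|_{\mathrm{op}}^{2p}\bigr)^{1/(2p)}\lesssim v\sqrt{jL}+B_UL$.
\item Markov's inequality then yields $\sigma^2\lesssim m^2v^2L+mB_U^2L^2$ with probability at least $1-e^{-u}$, hence $\sigma\sqrt L\lesssim mvL+B_U\sqrt m\,L^{3/2}$.
\item Truncate the increments at $R\approx v\sqrt{m(L+\log m)}+B_U(L+\log m)$. Control the compensator $\sum_j\|\mathbb E[D_j\mathbf 1_{\{\|D_j\|_{\mathrm{op}}>R\}}\mid\mathcal F_{j-1}]\|_{\mathrm{op}}$ by the same moment bounds with $p=u+\log m$.
\item Absorb the $\log m$ in $RL$ as above, after the harmless reductions $v\le B_U$ and $L\le m$. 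The latter is harmless because otherwise $m^2B_U\le B_U\sqrt m\,L^{3/2}$ trivially.
\end{enumerate}
As written, though, the key variance estimate is missing.
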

The above estimate is a direct consequence of \cite[Remark~3.11, equation~(3.9)]{minsker2019momentinequalitiesmatrixvaluedustatistics}.

\subsection{Sampling}
Throughout this subsection, we work on the finite cyclic group
\[
    G:=\mathbb Z/N\mathbb Z,
    \qquad
    N\approx W^2\approx 2^k,
\]
and fix a sufficiently large constant $B>0$. In what follows, all lower thresholds for $k$ may depend on $B$.

Let $A \subseteq G$ be a finite nonempty set. Suppose that
\[
    \{H_{b,c}:b,c\in A\}\subseteq\operatorname{Sym}(d\times d),
    \qquad d\lesssim W,
\]
are real symmetric matrices with zero diagonal, and that
\[
    H_{b,c}=H_{c,b}.
\]
Set
\[
    Q_A(b):=\mathbb E_cH_{b,c},
    \qquad
    \overline H_A:=\mathbb E_bQ_A(b),
    \qquad
    M_A:=\lambda_A\overline H_A,
\]
where $b,c$ are independent and uniformly distributed on $A$. Assume
that
\[
    0<\lambda_A\lesssim (k+2)^{100}
\]
and that
\begin{align}
    &\mathbb E_b\|Q_A(b)\|_F^2 \lesssim \frac{(k+2)^{100}}{W},
        \label{eq:sampling-first-hyp}\\
    &\mathbb E_{b,c}\|H_{b,c}\|_F^2 \lesssim (k+2)^{100},
        \label{eq:sampling-second-hyp}\\
    &\sup_{b,c}\|H_{b,c}\|_{\mathrm{op}} \leq1,
        \label{eq:sampling-op-hyp}\\
    &\mathbb E_cH_{b,c}^2 \preceq (k+2)^{-6B}I
        \qquad\text{for every }b\in A.
        \label{eq:sampling-variance-hyp}
\end{align}
We remark that the fixed exponents $100$ in the hypotheses above and
$102$ in Corollary~\ref{cor:sampling-transfer} are chosen to match the
application in the next section. They arise from the cardinality and
difference-count bounds in the proof of
Proposition~\ref{prop: fixed support prop} and are not intrinsic to the
sampling argument. More generally, the same argument accommodates any
fixed polynomial losses, after choosing $B$ sufficiently large in terms
of the corresponding exponents.

We now set
\begin{equation}\label{eq: m def}
    m:=\left\lceil(k+2)^{15B}\right\rceil
\end{equation}
and draw $b_1,\ldots,b_m$ independently and uniformly from $A$. We define the sampled matrix
\begin{equation}\label{eq:sampling-matrix}
    \widehat M_A
    :=
    \frac{\lambda_A}{m(m-1)}
    \sum_{i\neq j}H_{b_i,b_j},
\end{equation}
which by construction satisfies
\[
    \mathbb E_{\mathrm{aux}}\widehat M_A=M_A.
\]
Indeed, the indices $i,j$ in each summand are distinct, although the
sampled points $b_i,b_j$ themselves may coincide.

We call a sampled matrix \emph{good} if
\[
    \|\widehat M_A\|_F<(k+2)^{-9B},
    \qquad
    \|\widehat M_A\|_{\mathrm{op}}<(k+2)^{-17B}.
\]

\begin{lemma}\label{lem:sampling}
If $B$ is sufficiently large, then, uniformly
in $A$ and for all sufficiently large $k$,
\begin{equation}\label{eq:sampling-norm-bounds}
    \mathbb P_{\mathrm{aux}}\left(
        \|\widehat M_A\|_F\geq(k+2)^{-9B}
        \ \text{or}\
        \|\widehat M_A\|_{\mathrm{op}}\geq(k+2)^{-17B}
    \right)
    \lesssim (k+2)^{-10B}.
\end{equation}
\end{lemma}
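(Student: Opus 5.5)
We prove the two tail bounds separately, using the Hoeffding decomposition of the $U$-statistic in \eqref{eq:sampling-matrix}. Write
\[
    H_{b,c}=\overline H_A+\bigl(Q_A(b)-\overline H_A\bigr)+\bigl(Q_A(c)-\overline H_A\bigr)+F(b,c),
\]
where
\[
    F(b,c):=H_{b,c}-Q_A(b)-Q_A(c)+\overline H_A
\]
is symmetric and degenerate: $\mathbb E_cF(b,c)=0$ for every $b$. Summing over $i\neq j$ gives
\[
    \frac{1}{m(m-1)}\sum_{i\neq j}H_{b_i,b_j}
    =\overline H_A+\frac{2}{m}\sum_{i}\bigl(Q_A(b_i)-\overline H_A\bigr)+\frac{1}{m(m-1)}\sum_{i\neq j}F(b_i,b_j).
\]
We multiply by $\lambda_A\lesssim(k+2)^{100}$ and treat the three pieces in turn. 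Every polynomial loss $(k+2)^{O(1)}$ with an exponent independent of $B$ is absorbed by taking $B$ large.

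\textbf{Frobenius bound.} We use Markov's inequality on the second moment, $\mathbb P_{\mathrm{aux}}(\|\widehat M_A\|_F\ge\theta)\le\theta^{-2}\mathbb E_{\mathrm{aux}}\|\widehat M_A\|_F^2$, with $\theta=(k+2)^{-9B}$. The three Hoeffding pieces are orthogonal in $L^2(\mathbb P_{\mathrm{aux}};\|\cdot\|_F)$.
\begin{itemize}
\item For the mean term, Jensen's inequality and \eqref{eq:sampling-first-hyp} give $\|\overline H_A\|_F^2\le\mathbb E_b\|Q_A(b)\|_F^2\lesssim(k+2)^{100}/W$.
\item The linear term has second moment $\lesssim m^{-1}\mathbb E_b\|Q_A(b)\|_F^2\lesssim (k+2)^{100}/(mW)$.
\item By degeneracy, the second moment of the quadratic term is $\lesssim m^{-2}\mathbb E_{b,c}\|F(b,c)\|_F^2\lesssim m^{-2}(k+2)^{100}$, using \eqref{eq:sampling-second-hyp}.
\end{itemize}
Altogether $\mathbb E_{\mathrm{aux}}\|\widehat M_A\|_F^2\lesssim(k+2)^{300}\bigl(W^{-1}+m^{-2}\bigr)$. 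Since $W\approx 2^{k/2}$ and $m^{-2}=(k+2)^{-30B}$, multiplying by $\theta^{-2}=(k+2)^{18B}$ gives at most $(k+2)^{300-12B}+(k+2)^{18B+300}2^{-k/2}\lesssim(k+2)^{-10B}$ for $B$ and $k$ large.

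\textbf{Operator norm bound.} Deterministically, $\|\overline H_A\|_{\mathrm{op}}\le\|\overline H_A\|_F\lesssim(k+2)^{50}W^{-1/2}$, which is exponentially small in $k$. For the linear term, Jensen's inequality and \eqref{eq:sampling-variance-hyp} give $Q_A(b)^2\preceq\mathbb E_cH_{b,c}^2\preceq(k+2)^{-6B}I$. Hence the summands $Q_A(b_i)-\overline H_A$ are centered, satisfy $\|Q_A(b_i)-\overline H_A\|_{\mathrm{op}}\lesssim(k+2)^{-3B}$, and have variance proxy $\lesssim(k+2)^{-6B}$.

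The main obstacle is this step. Matrix Bernstein with $u\approx 10B\log(k+2)$ and $\log d\lesssim k$ yields a term of order $(k+2)^{-3B}m^{-1/2}\sqrt{k}$ plus lower order. Even ignoring $\lambda_A$, this is about $(k+2)^{-10.5B}$, which clears the target $(k+2)^{-17B}$ only if the dimension factor $\log(ed)\approx k$ were absent and $m$ were larger. So I expect the real bound to come from bounding the operator norm of the linear term more cleverly, without paying $\log d$ against $m^{-1/2}$. The first thing I would try is to use that $\|\cdot\|_{\mathrm{op}}\le\|\cdot\|_F$ is useless here but $\sum_i (Q_A(b_i)-\overline H_A)$ has small Frobenius mean, combining matrix Bernstein with $m=(k+2)^{15B}$ precisely. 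If the stated exponents are tight, the intended accounting may be that the linear term is bounded by $\lambda_A\|\overline H_A\|$-type quantities that are exponentially small in $k$. In that case the only polynomial term is the degenerate part, handled next.

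For the degenerate part, apply Lemma~\ref{lem:U statistic} to $F$ with $u=10B\log(k+2)$. We have $B_U\le 4$ from \eqref{eq:sampling-op-hyp}, and, by the same Jensen argument, $\mathbb E_cF(b,c)^2\lesssim(k+2)^{-6B}I$, so $v\lesssim(k+2)^{-3B}$. The lemma gives, with probability at least $1-(k+2)^{-10B}$,
\[
    \Bigl\|\sum_{i\ne j}F(b_i,b_j)\Bigr\|_{\mathrm{op}}\lesssim m(k+2)^{-3B}k+\sqrt m\,k^{3/2}.
\]
Dividing by $m^2$ and multiplying by $\lambda_A$ gives $\lesssim(k+2)^{101}\bigl(m^{-1}(k+2)^{-3B}+m^{-3/2}(k+2)^{1/2}\bigr)\le(k+2)^{-18B+101}$, which is $<\tfrac13(k+2)^{-17B}$.

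Finally, a union bound over the failure events of the three pieces gives \eqref{eq:sampling-norm-bounds}.
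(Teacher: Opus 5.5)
\textbf{Gap: the operator-norm bound for the first-order term is never proved.} Your Hoeffding decomposition, the second-moment Frobenius bound, the deterministic bound on the mean term, and the treatment of the degenerate part via Lemma~\ref{lem:U statistic} are all correct and agree with the paper. The missing piece is the operator norm of
\[
\frac{2\lambda_A}{m}\sum_{i=1}^m g_A(b_i),\qquad g_A(b)=Q_A(b)-\overline H_A .
\]
You correctly note that matrix Bernstein driven only by \eqref{eq:sampling-variance-hyp} gives roughly $(k+2)^{100-10.5B}$, which misses the target $(k+2)^{-17B}$. After that you only speculate about the ``intended accounting''. So your final union bound has no controlled failure event for this piece, and the proof is incomplete. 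Your remark that $\|\cdot\|_{\mathrm{op}}\le\|\cdot\|_F$ is useless here is exactly backwards for this term.

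\textbf{The fix is already in your Frobenius step.} There you showed
\[
\mathbb E_{\mathrm{aux}}\Big\|\frac{2\lambda_A}{m}\sum_{i=1}^m g_A(b_i)\Big\|_F^2\lesssim\frac{(k+2)^{300}}{mW}.
\]
The factor $W^{-1}\approx 2^{-k/2}$ comes from \eqref{eq:sampling-first-hyp}, and it makes this quantity smaller than any fixed power of $k+2$. Bound the operator norm by the Frobenius norm and apply Markov's inequality at threshold $\frac13(k+2)^{-17B}$. The failure probability is then
\[
\lesssim \frac{(k+2)^{34B+300}}{mW}\lesssim (k+2)^{-20B}
\]
for large $k$; this is precisely what the paper does.

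Matrix Bernstein fails because \eqref{eq:sampling-variance-hyp} only records the polynomial smallness $(k+2)^{-6B}$ of $\mathbb E_cH_{b,c}^2$. It discards the exponential smallness of $\|Q_A(b)\|_F$. The comparison $\|\cdot\|_{\mathrm{op}}\le\|\cdot\|_F$ is genuinely useless only for the degenerate part, whose Frobenius norm is merely polynomially small (of order $(k+2)^{150}/m=(k+2)^{150-15B}$, never below $(k+2)^{-17B}$). That is the only place the $U$-statistic inequality is needed.
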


\begin{proof}
By Jensen's inequality, we bound the deterministic norm 
\[
    \|\overline H_A\|_F^2
    \leq
    \mathbb E_b\|Q_A(b)\|_F^2
    \leq
    \frac{(k+2)^{100}}{W}.
\]
For all $b,c \in A$, we now decompose 
\begin{equation}\label{e:center}
    H_{b,c}
    =
    H^0_{b,c}
    +
    g_A(b)
    +
    g_A(c)
    +
    \overline H_A,
\end{equation}
where
\[
    g_A(b):=Q_A(b)-\overline H_A
\]
and
\[
    H^0_{b,c}
    :=
    H_{b,c}-Q_A(b)-Q_A(c)+\overline H_A.
\]
By construction
\[
    \mathbb E_bg_A(b)=0,
\]
and, using the symmetry $H_{b,c}=H_{c,b}$,
\[
    \mathbb E_cH^0_{b,c}=0,
    \qquad
    \mathbb E_bH^0_{b,c}=0.
\]
Consequently,
\begin{equation}\label{eq:sampling-decomposition}
    \widehat M_A
    =
    M_A
    +
    \frac{2\lambda_A}{m}\sum_{i=1}^m g_A(b_i)
    +
    \frac{\lambda_A}{m(m-1)}
    \sum_{i\neq j}H^0_{b_i,b_j}.
\end{equation}

Note that, by construction, the following orthogonality property holds:
\[
    \mathbb E_{b,c}\|H_{b,c}\|_F^2
    =
    \|\overline H_A\|_F^2
    +2\,\mathbb E_b\|g_A(b)\|_F^2
    +\mathbb E_{b,c}\|H^0_{b,c}\|_F^2.
\]
Moreover,
\[
    \mathbb E_b\|g_A(b)\|_F^2
    \leq
    \mathbb E_b\|Q_A(b)\|_F^2
\]
and
\[
    \mathbb E_{b,c}\|H^0_{b,c}\|_F^2
    \leq
    \mathbb E_{b,c}\|H_{b,c}\|_F^2.
\]
Hence
\begin{equation}\label{e:gstat}
    \mathbb E_{\mathrm{aux}}
    \Big\|
        \frac{2\lambda_A}{m}\sum_{i=1}^m g_A(b_i)
    \Big\|_F^2
    =
    \frac{4\lambda_A^2}{m}
    \mathbb E_b\|g_A(b)\|_F^2
    \lesssim
    \frac{(k+2)^{300}}{mW}.
\end{equation}

For the fully degenerate quadratic term, all cross terms vanish unless the two ordered
pairs of indices agree up to reversal. Recalling the decomposition in \eqref{eq:sampling-decomposition}, 
\begin{align*}
    \mathbb E_{\mathrm{aux}}\|\widehat M_A\|_F^2
    &=
    \|M_A\|_F^2
    +
    \frac{4\lambda_A^2}{m}
        \mathbb E_b\|g_A(b)\|_F^2 \\
    &\qquad
    +
    \frac{2\lambda_A^2}{m(m-1)}
        \mathbb E_{b,c}\|H^0_{b,c}\|_F^2\\
    &\lesssim
    (k+2)^{300}
    \left(
        \frac1W+\frac1{mW}+\frac1{m^2}
    \right).
\end{align*}
Since
\[
    m\approx(k+2)^{15B}
\]
and $W$ dominates every fixed power of $k+2$, if $B$ is sufficiently large 
\[
    \mathbb E_{\mathrm{aux}}\|\widehat M_A\|_F^2
    \lesssim
    (k+2)^{-29B}
\]
for all sufficiently large $k$. Markov's inequality therefore yields
\[
    \mathbb P_{\mathrm{aux}}
    \big(
        \|\widehat M_A\|_F\geq(k+2)^{-9B}
    \big)
    \lesssim
    (k+2)^{-11B}.
\]

We next estimate the operator norm. For the deterministic mean,
\[
    \|M_A\|_{\mathrm{op}}
    \leq
    \|M_A\|_F
    \leq
    \frac{(k+2)^{300/2}}{W^{1/2}}
    <
    \frac13(k+2)^{-17B}
\]
for all sufficiently large $k$.

For the first-order term, \eqref{e:gstat} and Markov's inequality yield
\begin{align*}
    &\mathbb P_{\mathrm{aux}}
    \Big(
        \Big\|
            \frac{2\lambda_A}{m}
            \sum_{i=1}^m g_A(b_i)
        \Big\|_{\mathrm{op}}
        \geq
        \frac13(k+2)^{-17B}
    \Big)\\
    &\qquad\lesssim
    \frac{(k+2)^{34B+300}}{mW}
    \lesssim
    (k+2)^{-20B}
\end{align*}
for all sufficiently large $k$.

It remains to control the completely degenerate quadratic term. By operator
Jensen and \eqref{eq:sampling-variance-hyp},
\[
    Q_A(b)^2
    \preceq
    (k+2)^{-6B}I
\]
for all $b \in A$.
Consequently,
\[
    \mathbb E_cQ_A(c)^2
    \preceq
    (k+2)^{-6B}I,
\]
and another application of Jensen yields
\[
    \overline H_A^2
    \preceq
    (k+2)^{-6B}I.
\]
For self-adjoint matrices $X_1,\ldots,X_4$,
\[
    (X_1+\cdots+X_4)^2
    \preceq
    4\sum_{j=1}^4X_j^2.
\]
It follows that
\[
    \|H^0_{b,c}\|_{\mathrm{op}}\leq4,
    \qquad
    \mathbb E_c(H^0_{b,c})^2
    \preceq
    16(k+2)^{-6B}I.
\]

We may therefore apply Lemma~\ref{lem:U statistic} with
\[
    B_{\mathrm U}=4,
    \qquad
    v=4(k+2)^{-3B},
    \qquad
    u=20B\log(k+2).
\]
Since $d\lesssim W\approx2^{k/2}$,
\[
    u+\log(ed)\lesssim_B k+2.
\]
Thus, outside an event of probability at most
\[
    e^{-u}=(k+2)^{-20B},
\]
we have
\begin{align*}
    \Big\|
        \frac{\lambda_A}{m(m-1)}
        \sum_{i\neq j}H^0_{b_i,b_j}
    \Big\|_{\mathrm{op}}
    &\lesssim_B
    (k+2)^{100}
    \Big(
        \frac{(k+2)^{-3B}(k+2)}{m}
        +
        \frac{(k+2)^{3/2}}{m^{3/2}}
    \Big).\\
\end{align*}
Recalling the definition of $m$ in \eqref{eq: m def}, if $B$ is sufficiently large the right-hand side above is bounded by 
\[
    \frac{1}{3} (k+2)^{-17B},
\]
for all sufficiently large $k.$

Combining the Frobenius estimate and the two operator-norm exceptional
estimates yields
\[
    \mathbb P_{\mathrm{aux}}\left(
        \|\widehat M_A\|_F\geq(k+2)^{-9B}
        \ \text{or}\
        \|\widehat M_A\|_{\mathrm{op}}\geq(k+2)^{-17B}
    \right)
    \lesssim
    (k+2)^{-10B},
\]
as claimed.
\end{proof}

\begin{cor}\label{cor:sampling-signs}
Let $\mathcal M$ be a finite family of real symmetric, zero-diagonal
$d\times d$ matrices, fixed independently of the Rademacher vector
$\varepsilon$, and suppose that
\begin{equation}\label{eq: size assumption matrix class}
    \log|\mathcal M|
    \lesssim
    (k+2)^{16B}.
\end{equation}
Call $M\in\mathcal M$ good if
\[
    \|M\|_F<(k+2)^{-9B},
    \qquad
    \|M\|_{\mathrm{op}}<(k+2)^{-17B}.
\]
Then there exists $c_B>0$ such that, for all sufficiently large $k$,
\[
    \mathbb P_\varepsilon
    \left(
        |\varepsilon^{\mathsf T}M\varepsilon|>1
        \text{ for some good }M\in\mathcal M
    \right)
    \leq
    \exp\bigl(-c_B(k+2)^{16B}\bigr).
\]
\end{cor}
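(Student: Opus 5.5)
The plan is a direct union bound over the good matrices, using Lemma~\ref{lem:HW} for each one. Since $\mathcal M$ is fixed independently of $\varepsilon$, it suffices to show that every good $M\in\mathcal M$ satisfies
\[
    \mathbb P_\varepsilon\bigl(|\varepsilon^{\mathsf T}M\varepsilon|>1\bigr)\le 2\exp\bigl(-2c_B(k+2)^{16B}\bigr).
\]
We then sum over at most $|\mathcal M|\le\exp\bigl(C(k+2)^{16B}\bigr)$ matrices. The exponent gained must beat the entropy \eqref{eq: size assumption matrix class}, so we will take $t$ somewhat larger than $(k+2)^{16B}$ in Hanson--Wright. The norm thresholds in the definition of good leave room for this.

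Fix a good $M$ and apply Lemma~\ref{lem:HW} with $t:=(k+2)^{17B-1}$. Clearly $t\ge1$, and the Hanson--Wright threshold is bounded by
\[
    C_{\mathrm{HW}}\bigl(\sqrt t\,\|M\|_F+t\,\|M\|_{\mathrm{op}}\bigr)
    < C_{\mathrm{HW}}\bigl((k+2)^{(17B-1)/2-9B}+(k+2)^{-1}\bigr).
\]
Here $(17B-1)/2-9B=-(B+1)/2<0$. So for $k$ large (depending only on $B$ and $C_{\mathrm{HW}}$) the right-hand side is at most $1$. Lemma~\ref{lem:HW} then gives
\[
    \mathbb P_\varepsilon\bigl(|\varepsilon^{\mathsf T}M\varepsilon|>1\bigr)\le 2e^{-t}=2\exp\bigl(-(k+2)^{17B-1}\bigr).
\]

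The union bound now yields
\[
    \mathbb P_\varepsilon\bigl(\exists\text{ good }M:\ |\varepsilon^{\mathsf T}M\varepsilon|>1\bigr)
    \le 2\exp\bigl(C(k+2)^{16B}-(k+2)^{17B-1}\bigr).
\]
When $B>1$, the exponent $17B-1$ exceeds $16B$. Hence for large $k$ the exponent is at most $-(k+2)^{16B}$, and we may take $c_B=1/2$, absorbing the factor $2$.

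The only delicate point is choosing $t$ so that it simultaneously keeps $\sqrt t\,\|M\|_F$ and $t\,\|M\|_{\mathrm{op}}$ below $1$ and dominates $\log|\mathcal M|$. This requires $16B<17B-1<18B$, which holds once $B>1$. The large gap between the Frobenius scale $(k+2)^{-9B}$ and the operator scale $(k+2)^{-17B}$ in the definition of good is exactly what allows it.
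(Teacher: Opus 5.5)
Your proof is correct and follows the paper's argument: apply Lemma~\ref{lem:HW} to each good matrix, then take a union bound over $\mathcal M$. The only difference is the choice of $t$. The paper takes $t=C_2(k+2)^{16B}$ with $C_2$ large relative to the implicit constant in \eqref{eq: size assumption matrix class}, whereas your $t=(k+2)^{17B-1}$ beats the entropy through the extra factor $(k+2)^{B-1}$ (using $B>1$), which gives an even stronger bound for each matrix.
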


\begin{proof}
Choose
\[
    t=C_2(k+2)^{16B},
\]
where $C_2>0$ is sufficiently large compared with the implicit
constant in \eqref{eq: size assumption matrix class}. If $M$ is good, then
\begin{align*}
    C_{\mathrm{HW}}
    \left(
        \sqrt t\,\|M\|_F
        +
        t\|M\|_{\mathrm{op}}
    \right)
    &\lesssim
    (k+2)^{-B}
    \leq1
\end{align*}
for all sufficiently large $k$. By Lemma \ref{lem:HW}, this implies that
\[
    \mathbb P_\varepsilon
    \big(
        |\varepsilon^{\mathsf T}M\varepsilon|>1
    \big)
    \leq
    2e^{-t}.
\]
Taking a union bound over $\mathcal M$ and choosing $C_2$ sufficiently
large proves the claim.
\end{proof}

\begin{cor}\label{cor:sampling-transfer}
Let $\mathcal A$ be a family of sets and, for each $A\in\mathcal A$,
let $\widehat M_A$ be an auxiliary random matrix satisfying
\[
    \mathbb E_{\mathrm{aux}}\widehat M_A=M_A.
\]
Suppose that every possible realization of every $\widehat M_A$
belongs to a common finite family $\mathcal M$ satisfying the
assumptions of Corollary~\ref{cor:sampling-signs}, and that
\[
    \sup_{A\in\mathcal A}
    \mathbb P_{\mathrm{aux}}
    \big(
        \widehat M_A\text{ is not good}
    \big)
    \lesssim
    (k+2)^{-10B}.
\]
Assume moreover that
\[
    \sup_{\varepsilon\in\{-1,1\}^d}
    \sup_{A\in\mathcal A}
    |\varepsilon^{\mathsf T}\widehat M_A\varepsilon|
    \lesssim
    (k+2)^{102}
\]
for every auxiliary sample outcome. If $B$ is sufficiently large, then, outside a set of sign probability at most
\[
    \exp\bigl(-c_B(k+2)^{16B}\bigr),
\]
one has
\[
    \sup_{A\in\mathcal A}
    |\varepsilon^{\mathsf T}M_A\varepsilon|
    \leq2.
\]
\end{cor}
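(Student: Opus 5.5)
Fix the good event $\mathcal E$ of Corollary~\ref{cor:sampling-signs} applied to $\mathcal M$: on $\mathcal E$, every good $M\in\mathcal M$ satisfies $|\varepsilon^{\mathsf T}M\varepsilon|\le 1$. The complement of $\mathcal E$ has sign probability at most $\exp(-c_B(k+2)^{16B})$, so it suffices to show that for every sign vector $\varepsilon\in\mathcal E$ and every $A\in\mathcal A$ we have $|\varepsilon^{\mathsf T}M_A\varepsilon|\le 2$. Note that $\mathcal E$ depends only on $\varepsilon$ and the fixed family $\mathcal M$, not on $A$ or the auxiliary sample. This is what makes the bound uniform in $A$: the auxiliary randomness is then used for each $A$ separately, with $\varepsilon$ frozen.

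Now fix $\varepsilon\in\mathcal E$ and $A\in\mathcal A$. Since $\varepsilon$ is deterministic with respect to the auxiliary sampling, linearity of expectation and $\mathbb E_{\mathrm{aux}}\widehat M_A=M_A$ give
\[
\varepsilon^{\mathsf T}M_A\varepsilon=\mathbb E_{\mathrm{aux}}\bigl[\varepsilon^{\mathsf T}\widehat M_A\varepsilon\bigr].
\]
Split the expectation according to whether $\widehat M_A$ is good. Every realization of $\widehat M_A$ lies in $\mathcal M$, so on the good event the integrand is bounded by $1$ in absolute value, since $\varepsilon\in\mathcal E$. On the bad event we use the deterministic bound $|\varepsilon^{\mathsf T}\widehat M_A\varepsilon|\lesssim(k+2)^{102}$ together with the hypothesis $\mathbb P_{\mathrm{aux}}(\widehat M_A\text{ not good})\lesssim(k+2)^{-10B}$. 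This yields
\[
|\varepsilon^{\mathsf T}M_A\varepsilon|\le 1+C(k+2)^{102-10B}.
\]

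Choosing $B>11$ (certainly allowed, since $B$ is "sufficiently large"), the right-hand side is at most $2$ for all sufficiently large $k$. Because the estimate holds for every $A\in\mathcal A$ on the single event $\mathcal E$, taking the supremum gives $\sup_{A}|\varepsilon^{\mathsf T}M_A\varepsilon|\le 2$ there.

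The only delicate point is the order of quantifiers. The exceptional sign event must be chosen before $A$ and independently of the auxiliary sample, which is guaranteed by the assumption that all realizations lie in the fixed family $\mathcal M$. Once that is in place, the remaining steps are routine.
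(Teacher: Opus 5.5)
Your proposal is correct and matches the paper's proof step for step. You fix the sign event from Corollary~\ref{cor:sampling-signs} (which depends only on $\mathcal M$, not on $A$), write $\varepsilon^{\mathsf T}M_A\varepsilon=\mathbb E_{\mathrm{aux}}[\varepsilon^{\mathsf T}\widehat M_A\varepsilon]$, and split over the good and bad auxiliary events to obtain $1+C(k+2)^{102-10B}\le 2$ for large $k$, with your remark on the order of quantifiers being exactly the point the paper's short argument relies on.
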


\begin{proof}
Fix a sign vector outside the exceptional event in
Corollary~\ref{cor:sampling-signs}. For each $A\in\mathcal A$, let
\[
    \mathcal G_A
    :=
    \{\widehat M_A\text{ is good}\}.
\]
By the unbiased nature of the auxiliary sampling, splitting the auxiliary expectation over
$\mathcal G_A$ and $\mathcal G_A^c$ and using the two assumptions above yields
\[
    |\varepsilon^{\mathsf T}M_A\varepsilon|
    \leq
    1+C(k+2)^{102-10B}
    \leq2
\]
for all sufficiently large $k$, where $C$ is independent of $A$ and $k$.
Since the sign event is independent of $A$, the estimate holds
simultaneously for every $A\in\mathcal A$.
\end{proof}

We are now prepared to complete the proof of Proposition \ref{prop: fixed support prop}, thereby concluding Theorem \ref{thm: main theorem}.

\section{Proof of Proposition \ref{prop: fixed support prop}}
To prove Proposition~\ref{prop: fixed support prop} we will make
a sequence of reductions and then verify that the resulting family of quadratic forms satisfies the assumptions of the sampling results from
Section~\ref{sec:sampling}.

As in that section, we fix a sufficiently large constant $B>0$.
Recall that
\[
    |S|\leq C_0W,
    \qquad
    D:=\max_{t\neq0}r_S(t)\leq C_0(k+2)^2,
\]
and that
\[
    G=\mathbb Z/N\mathbb Z,
    \qquad
    N\approx W^2\approx2^k.
\]
Thus $W\approx2^{k/2}$ dominates every fixed power of $k+2$ for all
sufficiently large $k$. We remark that all implicit constants below may depend on
$C_0$ and $C_1$.

The conclusion is immediate when $S$ is empty, so we assume
$|S|\geq1$.

\subsection{First reductions and trimming}\label{sec: first reductions}

Fix a nonempty set
\[
    A\subseteq G,
    \qquad
    |A|\leq C_1W(k+2)^{100},
\]
and write
\[
    a:=|A|.
\]
Expanding the convolution and separating the diagonal contribution
yields
\begin{equation}\label{eq: conv expansion in proof}
    \frac{\|Z\ast\mathbf1_A\|_{2}^2}{Wa}
    =
    \frac{|S|}{W}
    +
    \frac1{Wa}
    \sum_{\substack{u,v\in S\\u\neq v}}
        \varepsilon_u\varepsilon_v r_A(u-v),
\end{equation}
where
\[
    r_A(t)
    :=
    \bigl|\{(b,c)\in A^2:b-c=t\}\bigr|.
\]
Since $|S|\lesssim W$, it follows that
\begin{equation}\label{eq: conv explicit}
    \frac{\|Z\ast\mathbf1_A\|_{2}^2}{Wa}
    \lesssim
    1+
    \frac1{Wa}
    \sum_{\substack{u,v\in S\\u\neq v}}
        \varepsilon_u\varepsilon_v r_A(u-v).
\end{equation}

If
\[
    a\leq \frac{W}{(k+2)^2},
\]
then, using $D\lesssim(k+2)^2$, we obtain 
\begin{align}\label{eq: small sets contrib}
    \frac1{Wa}
    \Big|
        \sum_{\substack{u,v\in S\\u\neq v}}
            \varepsilon_u\varepsilon_v r_A(u-v)
    \Big|
    &\leq
    \frac1{Wa}
    \sum_{t\neq0}r_S(t)r_A(t)\\
    &\leq
    \frac{D}{Wa}
    \sum_{t\neq0}r_A(t)\\
    &=
    \frac{D\,a(a-1)}{Wa}
    \lesssim1,
\end{align}
so the desired estimate is immediate in this range; it remains
only to consider sets satisfying
\[
    \frac{W}{(k+2)^2}
    <
    a
    \leq
    C_1W(k+2)^{100}.
\]
In what follows, we refer to sets satisfying the preceding cardinality
restriction as \emph{admissible}.

For such a set $A$, define
\[
    n_-(y):=|A\cap(y-S)|,
    \qquad
    n_+(z):=|A\cap(z+S)|,
    \qquad y,z\in G,
\]
and set
\begin{equation}\label{eq: tau def}
    \tau:=\frac{a}{(k+2)^{6B}}.
\end{equation}
We define the corresponding sets of \emph{heavy centers} by
\[
    J_-:=\{y\in G:n_-(y)>\tau\},
    \qquad
    J_+:=\{z\in G:n_+(z)>\tau\}.
\]

\begin{lemma}[Heavy centers]\label{lem:heavy-centers}
For all sufficiently large $k$ and all admissible sets $A\subseteq G$,
\[
    \sum_{x\in J_\pm}n_\pm(x)^2\lesssim a|S|
\]
and
\[
    |J_\pm|\lesssim(k+2)^{13B}.
\]
The implicit constants depend only on $C_0$ and $C_1$.
\end{lemma}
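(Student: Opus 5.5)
Both bounds should follow from a second-moment computation for $n_\pm$, where the difference-count hypothesis $D\lesssim(k+2)^2$ controls the off-diagonal part. I treat $n_-$; the case of $n_+$ is identical after replacing $S$ by $-S$, which has the same difference counts.

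\emph{First moment and second moment.} Counting pairs gives
\[
\sum_{y\in G}n_-(y)=a|S|.
\]
Expanding the square gives
\[
\sum_{y\in G}n_-(y)^2=\bigl|\{(b,c,u,v)\in A^2\times S^2:\ b+u=c+v\}\bigr| .
\]
The tuples with $b=c$ force $u=v$, and they contribute exactly $a|S|$. For the tuples with $b\neq c$, we have $v-u=b-c=t\neq0$. For fixed $t$ there are $r_A(t)$ choices of $(b,c)$ and $r_S(t)\le D$ choices of $(u,v)$. Hence the off-diagonal part is at most $D\sum_{t\ne0}r_A(t)\le Da^2$. Altogether
\[
\sum_y n_-(y)^2\le a|S|+Da^2 .
\]
This is the analogue of \eqref{eq: small sets contrib}.

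\emph{Bounding $|J_-|$.} By Chebyshev,
\[
|J_-|\le \tau^{-2}\sum_y n_-(y)^2\le \frac{(k+2)^{12B}}{a^2}\bigl(a|S|+Da^2\bigr)=(k+2)^{12B}\Bigl(\frac{|S|}{a}+D\Bigr).
\]
Admissibility gives $a>W/(k+2)^2$, and $|S|\le C_0W$, so $|S|/a\lesssim (k+2)^2$. Together with $D\lesssim(k+2)^2$ this yields $|J_-|\lesssim (k+2)^{12B+2}\le(k+2)^{13B}$ once $B\ge2$.

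\emph{Bounding the heavy sum.} The crude bound $\sum_{J_-}n_-^2\le a|S|+Da^2$ is too weak, because $Da^2$ is far larger than $a|S|$ when $a\gg W$. This is the main obstacle, and the fix is to exploit heaviness in the off-diagonal count. I would write
\[
\sum_{y\in J_-}n_-(y)^2=\sum_{y\in J_-}n_-(y)+\sum_{y\in J_-}\#\{(b,c,u,v):\ b\neq c,\ b+u=c+v=y\}.
\]
The first term is at most $a|S|$. For the second term, fix $y\in J_-$ and set $A_y:=A\cap(y-S)$, so that $|A_y|=n_-(y)$. Writing $u=y-b$ and $v=y-c$ with $b,c\in A_y$, this reads
\[
n_-(y)^2=n_-(y)+\#\{(b,c)\in A_y^2:\ b\ne c\}.
\]
Summing over $J_-$, the off-diagonal pairs are $(b,c)$ with $b-c=t\ne0$ and $t=v-u\in S-S$. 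Each pair $(b,c)$ with $b\ne c$ lies in at most $r_S(b-c)\le D$ of the sets $A_y$, since $y=b+u$ with $(u,v)$ a representation of $b-c$ as $v-u$. Hence the second term is at most $D\,|\{(b,c)\in A^2:\ b-c\in S-S\}|$. Now split according to $b$: for each fixed $b$, the number of $c\in A$ with $c\in b-(S-S)$ is at most $|S|^2$. That gives
\[
\sum_{y\in J_-}n_-(y)^2\lesssim a|S|+D\,a|S|^2,
\]
which still carries an extra factor $D|S|$ compared with the claim.

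Since none of these direct counts recovers $a|S|$ on its own, I would fall back on combining the cardinality bound $|J_-|\lesssim(k+2)^{13B}$ with the trivial bound $n_-\le |S|$. I expect the intended argument controls $\sum_{J_-}n_-^2$ by $|J_-|\,|S|^2$ together with admissibility, and hence only up to polynomial factors in $k$, with such losses presumably absorbed later through $\tau$. If the clean bound $a|S|$ is really needed, I would try to refine this last step with a Cauchy--Schwarz argument on $\sum_{J_-}n_-$, using that each heavy $n_-(y)>\tau$ forces many representations. Pinning down that step is where I expect the real difficulty to lie.
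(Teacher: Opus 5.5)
Your bound on $|J_\pm|$ is correct, and your route is more direct than the paper's. The paper derives the cardinality bound from $E\lesssim a|S|$. You instead use the global second moment $\sum_y n_-(y)^2\le a|S|+Da^2$ plus Chebyshev, with $|S|/a\lesssim(k+2)^2$ and $D\lesssim(k+2)^2$, which gives $|J_-|\lesssim(k+2)^{12B+2}$ without touching the heavy sum.

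The genuine gap is the first assertion, $\sum_{x\in J_\pm}n_\pm(x)^2\lesssim a|S|$, which your proposal does not prove. Your direct counts give $a|S|+Da|S|^2$. The fallback $|J_-|\,|S|^2$ exceeds $a|S|$ by the factor $|J_-|\,|S|/a$, which your estimates only bound by $(k+2)^{12B+4}$. That is not the stated lemma, and the loss cannot be absorbed later. The heavy collisions in \eqref{eq: heavy collisions contrib} are bounded in absolute value, with no sign cancellation, and one needs exactly $\frac{1}{Wa}\sum_{J_\pm}n_\pm^2\lesssim|S|/W\lesssim 1$. A $(k+2)^{O(B)}$ loss there would be worse than the trivial factor $k$ in \eqref{eq: fourier bound conv}. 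Nor can it be traded off by changing $\tau$, since Lemma~\ref{lem: operator norm bounds} needs $\tau/a\le(k+2)^{-6B}$ to verify \eqref{eq:sampling-variance-hyp}.

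The missing idea is to count coincidences between heavy centers, rather than additive coincidences inside $A$, about which nothing is known. Write $n_-(y)=\langle\mathbf 1_{y-S},\mathbf 1_A\rangle$ and let $\mathcal B$ be the matrix with rows $\mathbf 1_{y-S}$, $y\in J_-$. Then $E:=\sum_{y\in J_-}n_-(y)^2=\|\mathcal B\mathbf 1_A\|_2^2\le a\,\|\mathcal B\mathcal B^{\mathsf T}\|_{\mathrm{op}}$. The Gram matrix $\mathcal B\mathcal B^{\mathsf T}$ has diagonal entries $|S|$ and off-diagonal entries $|(y-S)\cap(y'-S)|=r_S(y-y')\le D$. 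The row-sum bound therefore gives $E\le a(|S|+D|J_-|)$. Equivalently, your instinct to use Cauchy--Schwarz was right, but it must be applied in $b\in A$, not over $J_-$. The off-diagonal part is now controlled by the small number of heavy centers rather than by $a$ or $|S|^2$. To close the argument, use $|J_-|\tau^2\le E$, so that $E\le a|S|+\frac{aD}{\tau^2}E$ with $\frac{aD}{\tau^2}\lesssim\frac{(k+2)^{12B+4}}{W}\le\frac12$; hence $E\le 2a|S|$. Alternatively, insert your own bound on $|J_-|$ and note that $J_-\neq\varnothing$ forces $|S|\ge n_-(y)>\tau>W(k+2)^{-6B-2}$. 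This dominates $D|J_-|\lesssim(k+2)^{12B+4}$ for large $k$.
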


\begin{proof}
We prove the claim for $J_-$; the proof for $J_+$ is identical.

Let $\mathcal B$ be the matrix whose rows are indexed by $y\in J_-$
and are given by
\[
    \mathcal B_y=\mathbf1_{y-S}.
\]
Then
\[
    n_-(y)
    =
    \langle\mathbf1_{y-S},\mathbf1_A\rangle,
\]
and hence
\[
    \sum_{y\in J_-}n_-(y)^2
    =
    \|\mathcal B\mathbf1_A\|_2^2.
\]
The diagonal entries of $\mathcal B\mathcal B^{\mathsf T}$ are equal
to $|S|$. If $y\neq y'$, then
\[
    \langle\mathbf1_{y-S},\mathbf1_{y'-S}\rangle
    =
    r_S(y-y')
    \leq D.
\]
It follows from the row-sum bound for the operator norm of symmetric matrices that
\[
    \|\mathcal B\mathcal B^{\mathsf T}\|_{\mathrm{op}}
    \leq
    |S|+D|J_-|.
\]
Consequently,
\begin{equation}\label{eq: sum heavy centers}
    E
    :=
    \sum_{y\in J_-}n_-(y)^2
    \leq
    a\bigl(|S|+D|J_-|\bigr).
\end{equation}

Since every $y\in J_-$ satisfies $n_-(y)>\tau$, we also have
\begin{equation}\label{eq: J_- bound}
    |J_-|\tau^2\leq E,
\end{equation}
which combined with \eqref{eq: sum heavy centers} yields
\[
    E
    \leq
    a|S|+\frac{aD}{\tau^2}E.
\]
By the definition of $\tau$ and since $A$ is admissible,
\[
    \frac{aD}{\tau^2}
    \lesssim
    \frac{(k+2)^{12B+4}}{W}
    \leq\frac12
\]
for all sufficiently large $k$, which implies that
\[
    E\lesssim a|S|.
\]
Finally, 
\[
    |J_-|
    \lesssim
    \frac{a|S|}{\tau^2}
    =
    \frac{|S|}{a}(k+2)^{12B}
    \lesssim
    (k+2)^{12B+2}
    \lesssim
    (k+2)^{13B},
\]
provided $B$ is sufficiently large. This proves the claim.
\end{proof}

We now remove the collisions associated with the heavy centers. Recall
that a collision $(u,v,b,c) \in S \times S \times A \times A$ satisfies
\[
    u\neq v,
    \qquad
    u-v=b-c,
\]
and has associated centers
\[
    y:=b+v=c+u,
    \qquad
    z:=b-u=c-v.
\]
For each fixed ``minus-center" $y$, there are at most $n_-(y)^2$
corresponding collisions, and similarly there are at most $n_+(z)^2$
collisions having a fixed ``plus-center" $z$, so
Lemma~\ref{lem:heavy-centers} yields
\begin{equation}\label{eq: heavy collisions contrib}
    \frac1{Wa}
    \Big|
        \sum_{\substack{(u,v,b,c)\ \mathrm{a\ collision}\\
                        y\in J_-\ \mathrm{or}\ z\in J_+}}
            \varepsilon_u\varepsilon_v
    \Big|
    \lesssim
    \frac{|S|}{W}
    \lesssim1.
\end{equation}

It therefore remains only to consider collisions for which neither
center is heavy. For $b,c\in A$, define the matrix $L_{b, c}$, whose entries satisfy 
\[
    L_{b,c}(u,v)
    :=
    \mathbf1_{\{u\neq v\}}
    \mathbf1_{\{u-v=b-c\}}
    \mathbf1_{\{b+v\notin J_-\}}
    \mathbf1_{\{b-u\notin J_+\}},
    \qquad u,v\in S,
\]
and set
\[
    H_{b,c}
    :=
    \frac12
    \bigl(L_{b,c}+L_{b,c}^{\mathsf T}\bigr).
\]
We see that every $H_{b,c}$ is symmetric and has zero diagonal and
\[
    L_{c,b}=L_{b,c}^{\mathsf T},
    \qquad
    H_{c,b}=H_{b,c},
\]
since reversing a collision preserves its two centers.

We now set
\[
    \lambda_A:=\frac aW,
    \qquad
    \overline H_A:=\mathbb E_{b,c}H_{b,c},
\]
and define
\begin{equation}\label{eq: M_A def}
    M_A
    :=
    \lambda_A\overline H_A
    =
    \frac1{Wa}
    \sum_{b,c\in A}H_{b,c}.
\end{equation}
Since
\[
    \varepsilon^{\mathsf T}H_{b,c}\varepsilon
    =
    \varepsilon^{\mathsf T}L_{b,c}\varepsilon,
\]
the quadratic form
\[
    \varepsilon^{\mathsf T}M_A\varepsilon
\]
is precisely the normalized contribution of the surviving collisions.
In particular, recalling \eqref{eq: conv expansion in proof} and \eqref{eq: heavy collisions contrib}, we deduce that 
\begin{equation}\label{eq:trimmed-conv-expansion}
    \frac{\|Z\ast\mathbf1_A\|_2^2}{Wa}
    \lesssim
    1+
    |\varepsilon^{\mathsf T}M_A\varepsilon|,
\end{equation}
uniformly over all admissible sets $A.$ We have thus reduced matters to bounding the quadratic forms above. 

\subsection{Sampling and the universal sign event}

We now apply the sampling principle from Section~\ref{sec:sampling}.
Fix
\[
    m:=\left\lceil(k+2)^{15B}\right\rceil
\]
and let $b_1,\ldots,b_m$ be independent uniformly distributed points
of $A$. We form the sampled matrix
\[
    \widehat M_A
    :=
    \frac{\lambda_A}{m(m-1)}
    \sum_{i\neq j}H_{b_i,b_j};
\]
as alluded to above,
\[
    \mathbb E_{\mathrm{aux}}\widehat M_A=M_A.
\]

The deterministic estimates established below verify the hypotheses of
Lemma~\ref{lem:sampling}; consequently, if
$\mathcal G_A$ denotes the auxiliary event on which $\widehat M_A$ is
good, then
\begin{equation}\label{eq: P_aux norm bounds}
    \mathbb P_{\mathrm{aux}}(\mathcal G_A^c)
    \lesssim
    (k+2)^{-10B}.
\end{equation}

We next construct the finite family of sampled matrices over which the
universal sign estimate will be applied. A sampled matrix
$\widehat M_A$ is completely determined by the data
\[
    a=|A|,
    \qquad
    (b_1,\ldots,b_m),
    \qquad
    J_-,
    \qquad
    J_+,
\]
where by Lemma~\ref{lem:heavy-centers}, there exists a constant $C_2>0$,
depending only on $C_0$, such that
\[
    |J_\pm|\leq C_2(k+2)^{13B}.
\]
Let $\mathcal M_S$ denote the family of all matrices obtained from the
preceding formulas by allowing
\[
    1\leq a\leq N,
    \qquad
    b_1,\ldots,b_m\in G,
\]
and
\[
    J_-,J_+\subseteq G,
    \qquad
    |J_\pm|\leq C_2(k+2)^{13B}.
\]
We allow all such choices, including choices that do not arise from
an actual admissible set. Thus $\mathcal M_S$ is an enlargement of
the collection of sampled matrices that may occur in the proof.

There are at most $N^m$ choices for the ordered sample
$(b_1,\ldots,b_m)$ and at most
\[
    (N+1)^{C_2(k+2)^{13B}}
\]
choices for each of the sets $J_-$ and $J_+$. Therefore, 
\[
    |\mathcal M_S|
    \leq
    N^{m+1}(N+1)^{2C_2(k+2)^{13B}}.
\]
Since
\[
    m=\left\lceil(k+2)^{15B}\right\rceil
\]
and $\log N\lesssim k+2$, it follows that
\begin{equation}\label{eq: num of matrices bound}
    \log|\mathcal M_S|
    \lesssim
    (k+2)^{16B}
\end{equation}
for all sufficiently large $k$.

Corollary~\ref{cor:sampling-signs} now gives an event
$\mathcal E_S$, depending only on the fixed support $S$, such that
\[
    \mathbb P_\varepsilon(\mathcal E_S)
    \geq
    1-\exp\bigl(-c_B(k+2)^{16B}\bigr)
    \geq
    1-N^{-100}
\]
for all sufficiently large $k$, and on $\mathcal E_S$,
\[
    |\varepsilon^{\mathsf T}M\varepsilon|
    \leq1
\]
simultaneously for every good matrix $M\in\mathcal M_S$.

\subsection{Deterministic matrix estimates and conclusion}
\label{sec: conclusion}

It remains to verify the deterministic hypotheses required in the
sampling argument.

For $b\in A$, write
\[
    Q_A(b):=\mathbb E_cH_{b,c}.
\]

\begin{lemma}\label{lem: frobenius bounds}
For every nonempty $A\subseteq G$,
\[
    \mathbb E_b\|Q_A(b)\|_F^2
    \leq
    \frac Da,
    \qquad
    \mathbb E_{b,c}\|H_{b,c}\|_F^2
    \leq
    D.
\]
Moreover,
\[
    \|\overline H_A\|_F^2
    \leq
    \frac Da
\]
and
\[
    \|M_A\|_F^2
    \leq
    \frac{Da}{W^2}.
\]
\end{lemma}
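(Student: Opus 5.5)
The four bounds follow from counting the nonzero entries of $L_{b,c}$, using the difference-count bound $D$ together with the convexity of the Frobenius norm. We will not use the trimming indicators; we only use that the entries of $L_{b,c}$ lie in $\{0,1\}$.

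\emph{Bound on $\mathbb E_{b,c}\|H_{b,c}\|_F^2$.} Since $H_{b,c}=\frac12(L_{b,c}+L_{b,c}^{\mathsf T})$, the triangle inequality gives $\|H_{b,c}\|_F\le\|L_{b,c}\|_F$. The matrix $L_{b,c}$ has $0/1$ entries, so
\[
\|L_{b,c}\|_F^2\le\bigl|\{(u,v)\in S^2:u\neq v,\ u-v=b-c\}\bigr|.
\]
This count equals $r_S(b-c)$ when $b\neq c$. It vanishes when $b=c$, because the constraints $u-v=0$ and $u\neq v$ are incompatible. Hence $\|H_{b,c}\|_F^2\le D\,\mathbf 1_{\{b\neq c\}}\le D$ pointwise, and averaging over $b,c$ gives the second claim.

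\emph{Bound on $\mathbb E_b\|Q_A(b)\|_F^2$.} Fix $b$. For distinct $c\neq c'$ the matrices $L_{b,c}$ and $L_{b,c'}$ have disjoint supports, since $u-v$ determines $c=b-(u-v)$. The same holds for their transposes. Entrywise, therefore,
\[
0\le \sum_c H_{b,c}\le \tfrac12\bigl(\mathbf 1_{P_b}+\mathbf 1_{P_b}^{\mathsf T}\bigr),
\]
where $P_b$ is the disjoint union over $c$ of the supports of $L_{b,c}$. Consequently
\[
\Bigl\|\sum_c H_{b,c}\Bigr\|_F^2\le \sum_c\|L_{b,c}\|_F^2 .
\]
To justify this, note that each entry of $\sum_c H_{b,c}$ is at most $\frac12(x+y)$, where $x,y\in\{0,1\}$ are the entries of the indicator at $(u,v)$ and $(v,u)$. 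Squaring and using $(x+y)^2/4\le (x^2+y^2)/2$, then summing over all entries, gives $\|\sum_c H_{b,c}\|_F^2\le|P_b|$. Combining this with the pointwise bound above,
\[
\Bigl\|\sum_c H_{b,c}\Bigr\|_F^2\le\sum_{c\neq b} r_S(b-c)\le D(a-1).
\]
Dividing by $a^2$ yields $\|Q_A(b)\|_F^2\le D/a$, and averaging over $b$ gives the first claim.

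\emph{Bounds on $\overline H_A$ and $M_A$.} Jensen's inequality (convexity of $\|\cdot\|_F^2$) gives
\[
\|\overline H_A\|_F^2=\|\mathbb E_bQ_A(b)\|_F^2\le\mathbb E_b\|Q_A(b)\|_F^2\le D/a .
\]
Finally, $M_A=\frac aW\overline H_A$, so
\[
\|M_A\|_F^2\le \frac{a^2}{W^2}\cdot\frac{D}{a}=\frac{Da}{W^2}.
\]

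The only step needing care is the disjointness of supports in the bound on $Q_A(b)$. Without it, Jensen's inequality alone would give only $D$ instead of $D/a$. Disjointness holds because, for fixed $b$, the difference $u-v$ of a collision pins down $c$.
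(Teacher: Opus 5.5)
Your proof is correct and follows essentially the same route as the paper. In both, for fixed $b$ the collision relation pins down $c$, so $\mathbb E_c L_{b,c}$ has at most $Da$ nonzero entries, each equal to $1/a$; symmetrization does not increase the Frobenius norm, and Jensen's inequality handles $\overline H_A$ and $M_A$. Your entrywise inequality $(x+y)^2/4\le(x^2+y^2)/2$ is just an unpacked form of the paper's bound $\|\tfrac12(X+X^{\mathsf T})\|_F\le\|X\|_F$.
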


\begin{proof}
Fix $b\in A$. The relation
\[
    u-v=b-c
\]
determines $c$ uniquely from $u$ and $v$. Thus every nonzero entry of
$\mathbb E_cL_{b,c}$ is equal to $1/a$, and the number of such entries
is at most
\[
    \sum_{\substack{c\in A\\c\neq b}}
        r_S(b-c)
    \leq
    Da.
\]
Since symmetrization contracts the Frobenius norm,
\[
    \|Q_A(b)\|_F^2
    =
    \left\|
        \frac12\mathbb E_c
        (L_{b,c}+L_{b,c}^{\mathsf T})
    \right\|_F^2
    \leq
    \|\mathbb E_cL_{b,c}\|_F^2
    \leq
    \frac Da.
\]
Averaging in $b$ proves
\[
    \mathbb E_b\|Q_A(b)\|_F^2\leq\frac Da.
\]
Also,
\[
    \|\overline H_A\|_F^2
    =
    \|\mathbb E_bQ_A(b)\|_F^2
    \leq
    \mathbb E_b\|Q_A(b)\|_F^2
    \leq
    \frac Da.
\]
Since $M_A=(a/W)\overline H_A$, it follows that
\[
    \|M_A\|_F^2
    \leq
    \frac{Da}{W^2}.
\]

Finally, when $b\neq c$, the matrix $L_{b,c}$ has at most $D$
nonzero entries, while $L_{b,b}=0$. Hence
\[
    \|H_{b,c}\|_F^2
    \leq
    \|L_{b,c}\|_F^2
    \leq D,
\]
which gives
\[
    \mathbb E_{b,c}\|H_{b,c}\|_F^2\leq D.
\]
\end{proof}

\begin{lemma}\label{lem: operator norm bounds}
For every $b,c\in A$,
\[
    \|H_{b,c}\|_{\mathrm{op}}\leq1,
\]
and for every fixed $b\in A$,
\[
    \mathbb E_cH_{b,c}^2
    \preceq
    (k+2)^{-6B}I.
\]
\end{lemma}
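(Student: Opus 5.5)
The plan is to exploit the fact that each $L_{b,c}$ is a partial permutation matrix. This gives the operator-norm bound at once. For the variance bound we dominate $H_{b,c}^2$ by diagonal matrices, whose entries are controlled by the light-center thresholds $n_\pm\le\tau$ built into the trimming.

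\emph{Operator norm.} Fix $b,c\in A$. In the row of $L_{b,c}$ indexed by $u\in S$, a nonzero entry forces $v=u-(b-c)$. Symmetrically, in the column indexed by $v$ it forces $u=v+(b-c)$. So $L_{b,c}$ has at most one entry equal to $1$ in each row and each column, and all other entries are $0$. Such a $0/1$ matrix is a partial permutation matrix, so $\|L_{b,c}\|_{\mathrm{op}}\le 1$. Since transposition preserves the operator norm, the triangle inequality gives $\|H_{b,c}\|_{\mathrm{op}}\le\frac12(\|L_{b,c}\|_{\mathrm{op}}+\|L_{b,c}^{\mathsf T}\|_{\mathrm{op}})\le1$.

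\emph{Variance bound.} Write $L=L_{b,c}$. Positivity of $(L-L^{\mathsf T})(L-L^{\mathsf T})^{\mathsf T}=LL^{\mathsf T}+L^{\mathsf T}L-L^2-(L^{\mathsf T})^2$ gives $L^2+(L^{\mathsf T})^2\preceq LL^{\mathsf T}+L^{\mathsf T}L$. Expanding $(L+L^{\mathsf T})^2$, it follows that
\[
    H_{b,c}^2\preceq\tfrac12\bigl(L_{b,c}L_{b,c}^{\mathsf T}+L_{b,c}^{\mathsf T}L_{b,c}\bigr).
\]
Because $L_{b,c}$ is a partial permutation matrix, both products on the right are diagonal $0/1$ matrices. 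So it suffices to show that every diagonal entry of $\mathbb E_cL_{b,c}L_{b,c}^{\mathsf T}$ and of $\mathbb E_cL_{b,c}^{\mathsf T}L_{b,c}$ is at most $\tau/a=(k+2)^{-6B}$. Here we recall the definition \eqref{eq: tau def}.

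\emph{Counting for the first product.} Fix $b$ and a row $u$. The entry $(L_{b,c}L_{b,c}^{\mathsf T})(u,u)$ equals $1$ exactly when some $v\in S$ satisfies $c-v=b-u$, with $b-u\notin J_+$. Set $z:=b-u$. Then, whenever the entry is $1$, we have $c\in A\cap(z+S)$ and $z\notin J_+$. The number of such $c$ is therefore at most $n_+(z)\le\tau$. Dividing by $a$ shows that the $(u,u)$ entry of $\mathbb E_cL_{b,c}L_{b,c}^{\mathsf T}$ is at most $\tau/a$. If $z\in J_+$, the entry is $0$ for every $c$.

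\emph{Counting for the second product.} Fix $b$ and a column $v$, and set $y:=b+v$. The entry $(L_{b,c}^{\mathsf T}L_{b,c})(v,v)$ equals $1$ only if $y\notin J_-$ and $c=y-u$ for some $u\in S$, that is, $c\in A\cap(y-S)$. The count is thus at most $n_-(y)\le\tau$, and the same bound $\tau/a$ follows.

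Combining the two counts, $\mathbb E_cH_{b,c}^2\preceq\frac12\bigl(\tfrac{\tau}{a}+\tfrac{\tau}{a}\bigr)I=(k+2)^{-6B}I$.

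The only step requiring care is the domination of $H^2$ by the diagonal products $LL^{\mathsf T}$ and $L^{\mathsf T}L$. Once that is in place, the trimming conditions $b+v\notin J_-$ and $b-u\notin J_+$ are exactly what cap each count by $\tau$.
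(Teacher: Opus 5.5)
Your proof is correct and follows essentially the same route as the paper. You use that $L_{b,c}$ is a partial permutation matrix to get the operator-norm bound, dominate $H_{b,c}^2$ by $\tfrac12(L_{b,c}L_{b,c}^{\mathsf T}+L_{b,c}^{\mathsf T}L_{b,c})$ via positivity of $(L-L^{\mathsf T})(L-L^{\mathsf T})^{\mathsf T}$, and cap the diagonal entries of the averaged products by $\tau/a$ using the trimming conditions $b-u\notin J_+$ and $b+v\notin J_-$; your explicit count for the $L^{\mathsf T}L$ term via $n_-(b+v)$ simply fills in the step the paper dismisses as ``similarly.''
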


\begin{proof}
The collision relation determines each row index from its column index
and conversely, so $L_{b,c}$ is a partial permutation matrix. Thus
\[
    \|L_{b,c}\|_{\mathrm{op}}\leq1,
\]
and consequently
\[
    \|H_{b,c}\|_{\mathrm{op}}\leq1.
\]

Both $L_{b,c}L_{b,c}^{\mathsf T}$ and
$L_{b,c}^{\mathsf T}L_{b,c}$ are diagonal. Moreover, fixing $u\in S$, the
$u$-th diagonal entry of $L_{b,c}L_{b,c}^{\mathsf T}$ can be nonzero
only when
\[
    c\in(b-u)+S.
\]
If $b-u\in J_+$, then the trimming factor makes this entry zero.
Otherwise,
\[
    n_+(b-u)\leq\tau,
\]
which implies that 
\[
    \mathbb E_c
    (L_{b,c}L_{b,c}^{\mathsf T})(u,u)
    \leq
    \frac{\tau}{a}.
\]
Similarly,
\[
    \mathbb E_c
    (L_{b,c}^{\mathsf T}L_{b,c})(v,v)
    \leq
    \frac{\tau}{a}
\]
for every $v\in S$. Hence
\[
    \mathbb E_cL_{b,c}L_{b,c}^{\mathsf T}
    \preceq
    \frac{\tau}{a}I,
    \qquad
    \mathbb E_cL_{b,c}^{\mathsf T}L_{b,c}
    \preceq
    \frac{\tau}{a}I.
\]

Finally,
\[
    \frac12
    (LL^{\mathsf T}+L^{\mathsf T}L)
    -
    \left(
        \frac{L+L^{\mathsf T}}2
    \right)^2
    =
    \frac14
    (L-L^{\mathsf T})^{\mathsf T}
    (L-L^{\mathsf T})
    \succeq0.
\]
Applying this identity with $L=L_{b,c}$ and averaging in $c$ gives
\[
    \mathbb E_cH_{b,c}^2
    \preceq
    \frac{\tau}{a}I
    =
    (k+2)^{-6B}I.
\]
\end{proof}

We can now apply the sampling results from
Section~\ref{sec:sampling}. Since $A$ is admissible,
\[
    a > \frac{W}{(k+2)^2},
\]
and hence
\[
    \frac Da
    \lesssim
    \frac{(k+2)^4}{W}
    \leq
    \frac{(k+2)^{100}}{W}.
\]
Moreover,
\[
    \mathbb E_{b,c}\|H_{b,c}\|_F^2
    \leq
    D
    \lesssim
    (k+2)^2
    \leq
    (k+2)^{100},
\]
while the cardinality assumption on $A$ yields
\[
    \lambda_A
    =
    \frac aW
    \lesssim
    (k+2)^{100}.
\]
Together with Lemma~\ref{lem: operator norm bounds}, these are precisely
the hypotheses of Lemma~\ref{lem:sampling}.

It remains only to verify the crude bound required in
Corollary~\ref{cor:sampling-transfer}. The same entry count used above
yields
\[
    |\varepsilon^{\mathsf T}H_{b,c}\varepsilon|
    =
    |\varepsilon^{\mathsf T}L_{b,c}\varepsilon|
    \leq D.
\]
Therefore, uniformly in the auxiliary sample and in the signs,
\[
    |\varepsilon^{\mathsf T}\widehat M_A\varepsilon|
    \leq
    D\lambda_A
    \lesssim
    (k+2)^{102}.
\]
Thus Corollary~\ref{cor:sampling-transfer} applies, and 
on the event $\mathcal E_S$ we consequently have
\[
    \sup_{A\ {\rm admissible}}
    |\varepsilon^{\mathsf T}M_A\varepsilon|
    \leq2.
\]

Combining this estimate with
\eqref{eq:trimmed-conv-expansion} and recalling the small-set estimate
\eqref{eq: small sets contrib}, we conclude that
\[
    \frac{\|Z\ast\mathbf1_A\|_{2}^2}
         {W|A|}
    \lesssim1
\]
simultaneously for every
\[
    A\subseteq G,
    \qquad
    1\leq|A|\leq C_1W(k+2)^{100}.
\]
Since
\[
    \mathbb P_\varepsilon(\mathcal E_S)
    \geq1-N^{-100},
\]
this proves Proposition~\ref{prop: fixed support prop}, and with it, Theorem \ref{t:main} and Theorem \ref{thm: main theorem}.

\appendix

\section{A Sketch of an Earlier Argument}
\label{app:old-argument}

In this appendix we briefly describe an earlier argument of the second
author which yields the estimate in Proposition~\ref{p:key} with an additional factor
$\log(k+2)$ on the right-hand side; the below sketch highlights the different mechanisms by which the old and new arguments solve the same uniformity problem coming from the presence of a supremum over a very large class of sets. 

At its core, the proof is based on
Fourier analysis and metric entropy, with the key gain coming from the
diffuse nature of the Fourier transform of a sparse random set. In what
follows, we only sketch the main ideas and deliberately omit many of
the technical details.

Fixing a scale $k$, we pass as before to a cyclic group
\[
    G=\mathbb Z/N\mathbb Z,
    \qquad
    N\approx2^k,
    \qquad
    W:=W_k\approx N^{1/2},
\]
and write
\[
    Y(n):=(X_n-\sigma_n)\mathbf 1_{I_k}(n),
\]
identifying $I_k$ with its image inside $G.$

For $A\subseteq G$, we let
\[
    b_A
    :=
    \mathbf 1_A-\frac{|A|}{N}\mathbf 1_G
\]
denote the mean-zero version of $\mathbf 1_A$. We introduce the random
supremum
\[
    \mathcal Q_k(Y)
    :=
    \sup_{\substack{A\subseteq G\\
                    1\leq |A|\leq C_1W(k+2)^{100}}}
    \frac{\|Y*b_A\|_2^2}{W|A|};
\]
the omitted zero-frequency contribution is handled directly by way of Bernstein's inequality.

Rather than estimating $\mathcal Q_k(Y)$ almost surely directly, we reduce to a moment estimate.
Specifically, setting
\[
    q = q_k:=\log(k+2),
\]
an application of Markov's inequality and the Borel--Cantelli lemma
shows that the desired almost-sure logarithmic estimate follows once one
proves
\[
    \|\mathcal Q_k(Y)\|_{L^{q}}
    \lesssim
    \log(k+2).
\]

To prove this moment estimate, we first expand $\|Y*b_A\|_2^2$. The
diagonal term can be estimated directly, while the off-diagonal term is
a quadratic form in the independent random variables defining $Y$. A
probabilistic decoupling inequality in the spirit of \cite[Theorem 3.1.1]{deLaPenaGine1999Decoupling} allows us to introduce an
independent copy $Y'$ of $Y$ in the analysis and reduces the problem, up to bounded
terms, to controlling the relevant moment of the random variable
\[
    \sup_A
    \frac{
        |\langle Y*b_A,Y'*b_A\rangle|
    }{W|A|}.
\]

The next step is Fourier analytic. Let
\[
    \Omega=\Omega(Y)\subseteq G\setminus\{0\}
\]
consist of a suitably chosen number of frequencies at which
$|\widehat Y|$ is largest; in the argument, one takes
\[
    |\Omega|
    =
    \Big\lfloor
        \frac{N}{(k+2)^{B_1}}
    \Big\rfloor,
\]
where $B_1>0$ is a sufficiently large fixed constant. Using
Plancherel's identity, we split the preceding inner product into its
contributions on $\Omega$ and on its complement.

Away from $\Omega$, the Fourier coefficients of $Y$ are uniformly
smaller than the exceptional ones; an estimate based on
Bernstein's inequality yields the required logarithmic moment bound for
this contribution.

It remains to control the contribution from $\Omega$. The important
point is that $\Omega$ depends only on $Y$ and is therefore independent
of the copy $Y'$. Thus, from the point of view of $Y'$, we are reduced
to a restricted operator estimate for an arbitrary fixed set of
relatively few frequencies.

More precisely, we define the set 
\[
    \mathcal U_{\Omega}
    :=
    \Big\{
        \frac{P_\Omega b_A}{\sqrt{|A|}}:
        1\leq |A|\leq C_1W(k+2)^{100}
    \Big\} \subseteq \ell^2(G), 
\]
where $P_\Omega$ denotes the Fourier projection onto $\Omega$, and, for
$y\in\ell^2(G)$, we set
\[
    F_\Omega(y)
    :=
    \sup_{u\in\mathcal U_{\Omega}}
    \frac1W\|y*u\|_2^2.
\]
Our aim is therefore to estimate a suitable moment of
$F_\Omega(Y')$, uniformly over $\Omega$ of the prescribed cardinality.

Introducing another independent copy $Y''$ and using the convexity of $F_\Omega$,
one has
\[
    \|F_\Omega(Y')\|_{L^q}
    \leq
    \|F_\Omega(Y''-Y')\|_{L^q}.
\]
Writing
\[
    Y''-Y'
    =
    \sum_{n\in S}\varepsilon_n\delta_n,
\]
we may condition on the random support $S$, so that the
remaining randomness is carried by the independent Rademacher signs. 

The main gain in the restricted estimate comes from the interaction
between the sparsity of $S$ and the fact that $\Omega$ occupies only a
small portion of frequency space. At a qualitative level, the uncertainty principle suggests that a function which is highly localized in physical space cannot also be highly localized in frequency space; thus, the sparsity of \(S\) should force its Fourier transform to be spread over many frequencies. Randomness strengthens this heuristic by producing cancellation at nonzero frequencies and preventing the Fourier mass from concentrating on a prescribed small set.

This heuristic can already be seen in the simpler model in which each element of \(G\) is included in \(S\) independently with probability \(0<p\ll1\), so that \(p\) controls the expected sparsity of the resulting random set: randomness prevents concentration on any set of distinguished frequencies, while sparsity makes the resulting interaction with a fixed frequency set quantitatively small. Although the random set arising in our argument is generated with nonconstant inclusion probabilities, the same heuristic remains valid.

The restricted operator arising in the proof measures precisely this interaction between the random spatial set $S$ and the set of frequencies $\Omega$. Indeed, the relevant quantity is governed by
\[
R_{\Omega,S}=R_\Omega UJ_S,
\]
where $J_S: \ell^2(S) \hookrightarrow \ell^2(G)$ is coordinate inclusion, $U$ is the unitary Fourier transform on $\ell^2(G)$ and $R_\Omega = J_\Omega^*$ is coordinate projection onto $\Omega$.
The preceding heuristic suggests that the operator norm $\|R_{\Omega,S}\|_{2\to2}$ should be small: because $S$ is sparse and Fourier-diffuse, a vector supported on $S$ should not be able to place too much of its Fourier energy inside a fixed low-dimensional collection of frequencies on average. Matrix concentration estimates make this statement quantitative.

At this point,
the main remaining difficulty is to retain this gain uniformly over the very
large family of admissible sets. This is where metric entropy enters:
%On $\Omega$, the Fourier transform of an admissible vector has the form
%\[
%    \frac1{\sqrt{|A|}}
%    \sum_{x\in A}\psi_x,
%\]
%where the vectors $\psi_x$ belong to a fixed collection of $N$ phase
%vectors. Thus,
after normalization, the relevant Fourier vectors lie
in a suitable dilation of the convex hull of a fixed collection of $N$ phase vectors; although the original family of sets $A$ is combinatorially enormous, its image in the relevant Fourier metric has substantially smaller
complexity.

To turn this metric-entropy information into a bound for the
supremum, we apply the quadratic-process estimate of Krahmer, Mendelson
and Rauhut \cite{KrahmerMendelsonRauhut2014}. Conditional on the random
support $S$, this estimate bounds the relevant moments in terms of three
quantities: the maximal Frobenius norm of the associated convolution
matrices, their maximal operator norm, and the corresponding
$\gamma_2$-functional in the operator-norm metric.

These three quantities reflect the three basic features of the
argument. The Frobenius norm is controlled by the sparsity of $S$;
the operator norm is controlled by the Fourier-diffuse nature of $S$, addressed by the matrix-concentration estimate for
\[
    R_{\Omega,S}=R_\Omega UJ_S;
\]
the $\gamma_2$-functional is controlled by exploiting the convex-hull
structure of the projected Fourier vectors. Combining these estimates,
and then averaging over the randomness of $S$, yields
\[
    \|F_\Omega(Y')\|_{L^{q_k}}
    \lesssim1
\]
uniformly over the relevant fixed frequency sets $\Omega$.

Combining
this restricted estimate with the estimate outside the exceptional
Fourier frequencies yields
\[
    \|\mathcal Q_k(Y)\|_{L^{q_k}}
    \lesssim
    \log(k+2),
\]
and hence
\[
    \|\nu_k*\mathbf 1_A\|_2^2
    \lesssim_{\omega,C_1}
    \frac{\log(k+2)}{W_k}|A|
\]
almost surely, for all $k$.

The argument above and the sampling proof in the main body of this paper
address the same basic uniformity problem in rather different ways. In
the earlier proof, the gain comes from being diffuse in Fourier space, while metric entropy allows that gain to be retained uniformly over the large class
of admissible sets; the price of this entropy argument is an additional factor $\log(k+2).$ On the other hand, the present proof avoids this moment and metric-entropy mechanism.
Instead, each quadratic form $M_A$ is replaced by an empirical
representative $\widehat M_A$, which depends on far fewer parameters. The collection of all possible sampled
matrices has sufficiently small cardinality that it can be controlled
directly by Hanson--Wright and a union bound, while the identity
\[
    \mathbb E_{\mathrm{aux}}\widehat M_A=M_A
\]
allows us to transfer the resulting estimate back to the original family. In this
way, the sampling argument provides a direct finite compression of the
family of quadratic forms and removes the remaining logarithmic loss.

% Embedded bibliography: no external .bib file or BibTeX run is required.

\end{document}